\algnewcommand\algorithmicreturn{\textbf{return}}
\algnewcommand\RETURN{\State \algorithmicreturn}%
\newcommand{\tabincell}[2]{\begin{tabular}{@{}#1@{}}#2\end{tabular}}
\pgfplotsset{
  tick label style={font=\footnotesize},
  label style={font=\footnotesize},
  legend style={font=\footnotesize}
}
\newcommand{\xtilde}{\tilde{x}}
\newcommand{\xbar} {\bar{x}}
\newcommand{\cSbar} { {\bar{\mathcal{S}}} }
\newcommand{\bbstar}{x^\star}
\newcommand{\cstar}{x^\star}
\newcommand{\rd}{\mathrm{d}}
\newcommand{\bb}{x}
\newcommand{\bs}{\mathbf{s}}
\newcommand{\bbS}{\mathbb{S}}
\newcommand{\ran}{\mathrm{ran}}
\newcommand{\Fix}{\mathrm{Fix}}
\newcommand{\R}{\mathbb{R}}
\newcommand{\N}{\mathbb{N}}
\newcommand{\cI}{\mathcal{I}}
\newcommand{\cR}{\mathcal{R}}
\newcommand{\cS}{\mathcal{S}}
\newcommand{\cO}{\mathcal{O}}
\newcommand{\cT}{\mathcal{T}}
\newcommand{\cA}{\mathcal{A}}
\newcommand{\cU}{\mathcal{U}}
\newcommand{\cV}{\mathcal{V}}
\newcommand{\cZ}{\mathcal{Z}}
\newcommand{\cY}{\mathcal{Y}}
\newcommand{\cG}{\mathcal{G}}
\newcommand{\cP}{\mathcal{P}}
\newcommand{\cJ}{J}
\newcommand{\cH}{\mathcal{H}}
\newcommand{\cM}{\mathcal{M}}
\newcommand{\cX}{\mathcal{X}}
\newcommand{\cQ}{\mathcal{Q}}
\newcommand{\cK}{\mathcal{K}}
\newcommand{\cB}{\mathcal{B}}
\newcommand{\cQtilde}{\tilde{\mathcal{Q}}}
\newcommand{\utilde}{\tilde{u}}
\newcommand{\stilde}{\tilde{s}}
\newcommand{\ftilde}{\tilde{f}}
\newcommand{\gtilde}{\tilde{g}}
\newcommand{\bbtilde}{\tilde{x}}
\newcommand{\zer}{\mathrm{zer}}
\newcommand{\dist}{\mathrm{dist}}
\newcommand{\dom}{\mathrm{dom}}
\newcommand{\sri}{\mathrm{sri}}
\newcommand{\weak} {\rightharpoonup}
\newcommand{\red} {\color{red}}
\newcommand{\be}{\begin{equation}}
\newcommand{\ee}{\end{equation}}
\DeclareMathOperator{\prox}{prox}       
\DeclareMathOperator{\Arg}{Arg}         
\newtheorem{assumption}{Assumption}
\newtheorem{fact}{Fact}
\newtheorem{example}{Example}
\newtheorem{remark}{Remark}
\newif\ifcompilePGFfigs
\title{A generalized  forward-backward splitting operator:  Degenerate analysis and applications}
\author{Feng Xue\thanks{National Key Laboratory, Beijing, China (\tt{fxue@link.cuhk.edu.hk}).}  }
\date{\today}
\begin{document}

\maketitle

\begin{abstract}
In this paper, we study the  nonexpansive properties of  a generalized forward-backward splitting (G-FBS) operator, particularly under the setting of degenerate metric, from which follow the convergence results in terms of degenerate metric of the associated fixed-point iterations. The descent lemma and sufficient decrease property are also extended to degenerate case. It is further shown that  the G-FBS operator provides a simplifying and unifying framework to model and analyze a great variety of operator splitting algorithms,  many existing results are exactly recovered or relaxed from our general results. 
\end{abstract}

\begin{keywords}
Generalized forward-backward splitting (G-FBS), nonexpansive properties,  degenerate metric, operator splitting algorithms
\end{keywords}

\begin{AMS}
68Q25, 47H05, 90C25, 47H09
\end{AMS}

\section{Introduction}  \label{sec_intro}

\subsection{Forward-backward splitting algorithms}
A classical optimization problem is to find a zero point of the sum of a (set-valued) maximally monotone operator $\cA: \cH \mapsto 2^\cH$ and a $(1/\beta)$-cocoercive operator $\cB: \cH \mapsto \cH$ with $\beta \in [0,+\infty[$\footnote{In our context, the problem \eqref{inclusion}  encompasses the case of $\beta=0$, which corresponds to $\cB=0$, that can be understood as $+\infty$-cocoercive. See further Remark \ref{r_assume_1}-(iv).}:
\be \label{inclusion}
 0 \in (\cA +\cB) \bbstar.
\ee
{\red A standard solver of the problem \eqref{inclusion} is the classical forward-backward splitting algorithm \cite{attouch_2018}:
\be \label{fbs}
\bb^{k+1} := \big(\cA + \frac{1}{\tau} \cI \big)^{-1} 
\big(\frac{1}{\tau} \cI -  \cB \big)  \bb^k .
\ee
We in this paper study a generalized version of \eqref{fbs}, by extending $\frac{1}{\tau} \cI$ to more general metric $\cQ$
 \cite{plc_vu_2014}:
\be \label{gfbs}
\bb^{k+1} := \cT  \bb^k, \quad \text{where\ }
\cT =  (\cA +  \cQ)^{-1} (\cQ -  \cB), 
\ee
where $\cT$ is called a generalized forward-backward splitting (G-FBS) operator.}

The scheme \eqref{gfbs} unifies and extends several typical iterative methods shown in Table 1.1.   The proximal FBS  has been extensively studied in the literature, e.g., \cite{plc,plc_chapter}, in a convex setting, and further discussed in \cite{bolte_2009,bolte_2013,bolte_2014} for the nonconvex case. The metric proximal FBS was recently studied in  \cite{pesquet_2016,pesquet_2014} in a nonconvex setting.

\begin{table} [h!] \label{table}
\centering
\caption{The related schemes to G-FBS \eqref{gfbs}  }
\vspace{-.5em}
\hspace*{-.2cm}
\resizebox{.97\columnwidth}{!} {
\begin{tabular}{|l|l|l|} 
    \hline   
algorithms & iterative schemes & correspondence with \eqref{gfbs}  \\
\hline
 \tabincell{l}{ classical  \\ FBS  \cite{attouch_2018,plc_vu_2014} }   &
 \tabincell{l} { $\bb^{k+1} := (\cI +\tau \cA)^{-1} (\cI -\tau \cB) \bb^k $  \\ \hspace*{.7cm} $:= \cJ_{\tau \cA} \circ (\cI - \tau \cB) \bb^k $ }   & $\cQ = \frac{1}{\tau} \cI$ \\ 
    \hline  
 \tabincell{l}{ proximal  \\ FBS   \cite{plc,plc_chapter} }   &
 \tabincell{l} { $
 \bb^{k+1} := (\cI +\tau \partial g)^{-1} (\cI - \tau \nabla f) \bb^k $  \\ \hspace*{.7cm} $ :=  \prox_{\tau g} \big( \bb^k - \tau \nabla f(\bb^k) \big) $ }   & 
  $\cA = \partial g$, $\cB = \nabla f$, $\cQ=\frac{1}{\tau}\cI$  \\ 
    \hline  
 \tabincell{l}{   metric proximal   \\ FBS    \cite{pesquet_2016,pesquet_2014,repetti} }   &
 \tabincell{l} { $ \bb^{k+1} : = (\partial g +   \cQ)^{-1} (\cQ -  \nabla f ) \bb^k  $  \\ \hspace*{.7cm} $ := \prox_g^\cQ \big( \bb^k - \cQ^{-1} \nabla  f(\bb^k) \big) $ }   & 
  $\cA = \partial g$, $\cB = \nabla f$ \\ 
    \hline  
 \tabincell{l}{    PPA \cite{rtr_1976,ppa_guler}   }   &
 \tabincell{l} { $ x^{k+1} : = (\cI + \tau \partial g )^{-1} x^k  $  \\ \hspace*{.7cm} $ :=\prox_{\tau g} (x^k) $ }   & 
  $\cA = \partial g$, $\cB = 0$, $\cQ = \frac{1}{\tau} \cI$ \\ 
    \hline  
 \tabincell{l}{  metric   PPA  \cite{warp}   }   &
 \tabincell{l} { $ x^{k+1} : = (\cA +   \cQ)^{-1}  \cQ   x^k  $ }   &     $\cB = 0$  \\ 
    \hline  
     \end{tabular}  } 
\vskip 0.5em
\end{table}

The first-order operator splitting algorithms have been revitalized in the past decade, due to the high demand in solving large and huge scale problems arising in a wide range of fundamental applications (e.g., signal processing and machine learning).
Several frameworks and tools have recently been proposed for the unification of these algorithms, e.g., nonexpansive operator  \cite{ljw_mapr}, Fej\'{e}r monotonicity \cite{plc_vu} and  fixed-point theory \cite{plc_fixed}. However, these concepts are rather abstract, and not directly connected to the specific algorithms at hand. It is not easy to reinterpret many complicated splitting algorithms using these tools. The works of \cite{teboulle_2018,plc_bregman} discussed the Bregman proximal mapping and the associated Bregman proximal gradient algorithm. It remains unclear  how to use the framework to analyze the splitting  algorithms.

Asymmetric forward-backward adjoint (AFBA) splitting scheme was shown in \cite{latafat_2017,latafat_chapter} to be a unified structure for many splitting algorithms. However, it fails to provide a standard pattern of convergence analysis for these algorithms.  In a series of works of He and Yuan \cite{hbs_siam_2012,hbs_siam_2012_2,hbs_jmiv_2017}, the metric PPA have been shown as a unified framework for Douglas-Rachford splitting (DRS), alternating direction methods of multipliers (ADMM) and primal-dual hybrid gradient (PDHG) algorithms, by the characterization of variational inequality. This approach was further extended in \cite{fxue_gopt}, which unifies and simplifies the analysis of these algorithms based on an equivalent inclusion form. The problem setting, which was characterized by the variational inequality in  \cite{hbs_siam_2012,hbs_siam_2012_2,hbs_jmiv_2017} on a case-by-case basis, is  completely and uniformly encoded in the maximally monotone operator $\cA$ in \cite{fxue_gopt}. The similar idea of using monotone operator theory was also developed in a most recent book draft \cite{ywt_book}.  However, this method fails to cover the proximal FBS \cite{plc} and many primal-dual splitting (PDS) algorithms \cite{plc_vu_2014,condat_2013,plc_2012},  which involve a Lipschitz continuous gradient. This limitation was also mentioned in \cite[Sect. 8]{fxue_gopt}.

The purpose of this paper is to show that the G-FBS operator \eqref{gfbs} provides a unified framework and analysis for various splitting algorithms.  However, we found that  the metric $\cQ$ associated with several splitting algorithms is often positive {\it semi-}definite, rather than {\it strictly} positive definite (see Examples \ref{eg_radmm}, \ref{eg_alm} and \ref{eg_lb} in Sect. \ref{sec_eg}). This phenomenon is referred to as ‘{\it degeneracy}’, which implies that  the information of $x$ lying in the non-trivial kernel space of $\cQ$ is redundant and inactive  in the scheme \eqref{gfbs}. Consequently,  the convergence of $\{x^k\}_{k\in\N}$ generated by \eqref{gfbs} in the whole space may not hold, since the distance in a sense of the degenerate metric\footnote{Correspondingly, the term {\it non-degenerate} used in this paper exactly means {\it strictly positive definite}.} cannot measure the closeness between two points in whole space. Thus, it is not a trivial extension from the classical scalar case of $\cQ=\frac{1}{\tau}\cI$ and deserves particular attention.  Many recent works related to variable metric version of algorithms assumed the metric $\cQ$ to be non-degenerate, e.g., \cite{pesquet_2016,pesquet_2014,repetti,
fxue_gopt,condat_tour}. The degenerate case, to the best of our knowledge, has never been discussed before. This is the focus of this paper.

\subsection{Contributions} 
The contributions are in order.
\begin{itemize}
\item We study the nonexpansive properties of the G-FBS operator  \eqref{gfbs} and its relaxed version \eqref{rgfbs} under degenerate setting (cf. Sect. \ref{sec_operator}). The weak convergence and metric-based asymptotic regularity of the associated fixed-point iterations are established in Sect. \ref{sec_gfbs}. All of the existing results presented in \cite{plc_vu_2014,pesquet_2016,pesquet_2014,repetti} are extended to the  non-degenerate case.

\item We generalize the descent lemma and sufficient decrease property to the degenerate setting (cf. Lemmas \ref{l_descent} and \ref{l_decrease}), and then, the convergence in terms of objective value is established in Proposition \ref{p_gpfbs_obj}. Many related results under non-degenerate setting can be exactly recovered from our results.

\item In Sect. \ref{sec_extension}, the G-FBS operator is further generalized to arbitrary relaxation operator $\cM$, the convergence behaviours of the relaxed G-FBS algorithm \eqref{gppa} are discussed. 

\item It is shown in Sect. \ref{sec_eg} that a great variety of popular algorithms can be uniformly represented by the G-FBS operator \eqref{gfbs} or its relaxed version \eqref{t_relaxed}, by specifying the  operators $\cA$ and $\cB$, (degenerate) metric $\cQ$ (and relaxation operator  $\cM$, if necessary). There are three big advantages over other existing frameworks: (1) it is easy to fit specific algorithm into the G-FBS operator \eqref{gfbs}, without basically changing the algorithmic structures and exploring the contractive properties as in \cite{ljw_mapr,plc_vu}; (2) the convergence analysis is unified in Sect. \ref{sec_gfbs} and \ref{sec_extension}: one does not need to perform the {\it ad-hoc} analysis for specific algorithms as in \cite{plc_fixed,condat_tour}; (3) it covers proximal FBS and its related algorithms, which cannot be analyzed by metric PPA framework of \cite{fxue_gopt,ywt_book}.  This unification and simplification provides a much easier way to  understand these algorithms, compared to the original proofs in  literature.
\end{itemize}

\subsection{Notations and definitions} \label{sec_notation}
We use standard notations and concepts from convex analysis and variational analysis, which, unless otherwise specified, can all be found in the classical and recent monographs \cite{rtr_book,rtr_book_2,plc_book,beck_book}.

A few more words about our  notations are in order. Let $\cH$ be a real Hilbert space, equipped with inner product $\langle \cdot |\cdot \rangle$ and induced norm $\|\cdot \|$. The classes of linear, self-adjoint, self-adjoint and positive (semi-)definite  operators are denoted by $\bbS$, $\bbS_+$ and $\bbS_{++}$, respectively. The $\cQ$-norm is defined as: $\|\cdot\|_\cQ^2 :=  \langle \cQ\cdot | \cdot \rangle$. Note that  $\cQ$ here is allowed to be degenerate, and thus, $\|x\|_\cQ=0$ does not necessarily imply $x=0$.  $\cQ^\top$ denotes the adjoint of $\cQ$.  $\cQ^\dagger$ stands for the pseudo-inverse of $\cQ$, if $\cQ$ is singular. The strong and weak convergences are denoted by $\rightarrow$ and $\weak$, respectively.  {\red Following \cite[Definition 11.3]{plc_book}, the set of minimizers of a function $f$ is denoted by $\Arg\min f$. If $\Arg \min f$ is a singleton, its unique element is denoted by $\arg\min_x f(x)$. }

\vskip.1cm
Note that our expositions will be largely based on the nonexpansive  properties in the context of   arbitrary degenerate  metric $\cQ$. Thus, it is necessary to extend  the classical notions of  Lipschitz continuity \cite[Definition 1.47]{plc_book}, nonexpansiveness \cite[Definition 4.1]{plc_book}, cocoerciveness \cite[Definition 4.10]{plc_book} and averagedness \cite[Definition 4.33]{plc_book} 
 to this setting.

\begin{definition} \label{def_nonexpansive}
Let the metric $\cQ$ be self-adjoint and at least degenerate. Then, the operator $\cT$ is:
\begin{itemize}
\item[\rm (i)]  {\it $\cQ$-based $\xi$-Lipschitz continuous}, if
$ \big\| \cT \bb_1 - \cT \bb_2  \big\|_\cQ \le \xi
 \big\|  \bb_1 - \bb_2 \big\|_\cQ$.

\item[\rm (ii)]  {\it $\cQ$-nonexpansive}, if
$ \big\| \cT \bb_1 - \cT \bb_2  \big\|_\cQ \le 
 \big\|  \bb_1 - \bb_2 \big\|_\cQ$.

\item[\rm (iii)] {\it $\cQ$-firmly nonexpansive}, if 
\[
\langle \cQ (\bb_1 - \bb_2) | \cT\bb_1-\cT\bb_2 \rangle
 \ge  \big\| \cT \bb_1 - \cT \bb_2  \big\|_\cQ^2,
\]
or equivalently, 
\[
\big\| \cT\bb_1-\cT\bb_2 \big\|_\cQ^2 +  
\big\|(\cI - \cT) \bb_1 - (\cI - \cT) \bb_2  \big\|_\cQ^2 \le 
 \big\| \bb_1 - \bb_2  \big\|_\cQ^2.
 \]

\item[\rm (iv)]  {\it $\cQ$-based $\beta$-cocoercive}, if $\beta \cT$ is $\cQ$-firmly nonexpansive:
\[
\langle \cQ (\bb_1 - \bb_2) | \cT\bb_1-\cT\bb_2 \rangle
\ge \beta \big\| \cT \bb_1 - \cT \bb_2  \big\|_\cQ^2.
 \]

\item[\rm (v)]  {\it $\cQ$-based $\alpha$-averaged} with $\alpha \in \ ]0,1 [$, if there exists a $\cQ$-nonexpansive operator $\cK$, such that $\cT = (1-\alpha) \cI + \alpha \cK$.
\end{itemize}
\end{definition} 

\vskip.1cm
It is easy to check that the standard results of \cite[Remark 4.34, Propositions 4.35, 4.39, 4.40]{plc_book} also hold for the (degenerate) metric $\cQ$. This is merely a trivial extension, and the degeneracy of $\cQ$ is not troublesome here.  For example, similar to \cite[Proposition 4.35]{plc_book},  the $\cQ$-based $\alpha$-averaged $\cT$ should satisfy
\be \label{ave}
\big\|\cT x_1 -\cT x_2 \big\|_\cQ^2 \le 
\big\| x_1 -  x_2 \big\|_\cQ^2
- \frac{1-\alpha}{\alpha} \big\| (\cI-\cT) x_1 
-(\cI-\cT) x_2 \big\|_\cQ^2.
\ee

\section{The nonexpansive properties under  degenerate setting}
\label{sec_operator}
\subsection{Assumptions and basic results}
In this part, we make  the following  assumption regarding the operator $\cT$ in \eqref{gfbs}, with particular focus on the degenerate metric.
\begin{assumption} [{\red Degenerate} setting]  \label{assume_1}
\begin{itemize}
\item [\rm (i)] $\cA$ is (set-valued) maximally monotone;
\item [\rm (ii)] $\cQ$ is linear, self-adjoint and bounded;
\item [\rm (iii)] $\cQ$ is  degenerate, such that $\ker \cQ \backslash \{0\} \ne \emptyset$;
\item [\rm (iv)] $\cQ$ has a closed range, i.e., $\overline{\ran \cQ} = \ran \cQ$;
 \item [\rm (v)] $\cB$ is $\beta^{-1}$-cocoercive with $\beta \in\ [0,+\infty[$, and $\dom \cB =\cH$;
 \item [\rm (vi)]   $\ran \cB \subseteq \ran \cQ$;
\item [\rm (vii)] $\ran(\cA+\cQ) \supseteq \ran (\cQ-\cB)$;
\item [\rm (viii)] $\zer(\cA+\cB) \ne \emptyset$.
\end{itemize}
\end{assumption}

One will see in Sect. \ref{sec_eg} that Assumption \ref{assume_1} is rather mild, which can be  satisfied by most typical splitting algorithms. We further have the following remarks regarding Assumption \ref{assume_1}.

\begin{remark} \label{r_assume_1}
{\rm (i)} Assumption \ref{assume_1}-(iv)  implies, by \cite[Fact 2.26]{plc_book}, that $\exists \nu \in\ ]0,+\infty[$,  $\| \cQ x\| \ge \nu \|x\|$, $\forall x \in (\ker \cQ)^\perp = \ran \cQ$. This shows: (1) $\cQ$ is strictly positive definite in $\ran \cQ$; (2) This constant $\nu$ can be understood as the smallest eigenvalue restricted to $\ran\cQ$, which will be frequently used in the remainder of this paper. If $\cQ$ is non-degenerate and closed, this assumption simply becomes: $\exists \nu \in\  ]0,+\infty[$, such that $\cQ \succeq \nu \cI$.

{\rm (ii)} Regarding  Assumption \ref{assume_1}-(v),  by the classical Baillon-Haddad Theorem \cite[Corollary 18.17]{plc_book}, $\cB$ is $\beta$-Lipschitz continuous.   By \cite[Example 20.31]{plc_book}, $\cB$ is also maximally monotone, and so is $\cA +\cB$ due to $\dom \cB=\cH$, by \cite[Corollary 25.5]{plc_book}. This observation will be essential for proving weak convergence in Theorem \ref{t_dist}.

{\rm (iii)} Assumption \ref{assume_1}-(vi) is essential for our degenerate analysis.  To understand this, one can think of a simple, but non-trivial example of $ \cB = \begin{bmatrix}
1 & 0 \\ 0 & 0 \end{bmatrix}$, which is 1-cocoercive, and satisfies $\ran \cB \subseteq \ran \cQ$ for a degenerate metric $ \cQ = \begin{bmatrix}
1 & 0 \\ 0 & 0 \end{bmatrix}$.

{\rm (iv)}  Assumption \ref{assume_1}-(v) and (vi) also encompass a trivial, but important case of $\cB=0$,   since $\cB=0$ is obviously $0$-Lipschitz continuous (i.e., $\beta=0$), and thus $+\infty$-cocoercive. Also note  $\ran \cB =\{0\} \subseteq \ran \cQ$ for any linear operator $\cQ$. This special case reduces the G-FBS operator \eqref{gfbs}   to $(\cA+\cQ)^{-1}\cQ$---the warped resolvent \cite{warp} or $D$-resolvent \cite{hhb_resolvent,hhb_2003}, which has many important applications in DRS, ADMM and PDHG  (see \cite{fxue_gopt} and also Examples \ref{eg_ppa}, \ref{eg_cp}, \ref{eg_plc_dual}, \ref{eg_alm}, \ref{eg_linear_alm} and \ref{eg_lb} in Sect. \ref{sec_eg}).

{\rm (v)} Assumption \ref{assume_1}-(vii) guarantees that $\cT$ given by \eqref{gfbs} is well-defined everywhere, i.e., $\dom \cT = \cH$. This assumption was carefully discussed in \cite{warp,arias_infimal}.   If $\cQ$ is non-degenerate, by the classical Minty's theorem \cite[Theorem 21.1]{plc_book}, one can conclude $\ran(\cA+\cQ) = \cH$ for any maximally monotone operator $\cA$. This automatically fulfills this assumption. However, this assumption is needed here, since  $\cA+\cQ$ may not be surjective in $\cH$ for degenerate $\cQ$.

{\rm (vi)} Assumption \ref{assume_1}-(viii)  ensures the existence of the solution to \eqref{inclusion}. 
\end{remark}

The following basic results are essential for later developments.
\begin{fact} \label{f_1}
Under Assumption \ref{assume_1},  the following hold. 
\begin{itemize}
\item[\rm (i)] $ \big\| \cB  x   \big\|^2  \ge  \nu  \big\|  \cQ^\dagger \cB x  \big\|_{\cQ}^2$, $\forall x\in\cH$;
\item[\rm (ii)] 
$\big\langle  \cB  x | y    \big\rangle  
  \le  \big\|  \cQ^\dagger \cB x  \big\|_{\cQ} \cdot 
\big\| y\big\|_\cQ$, $\forall (x,y) \in \cH \times \cH$;
\item[\rm (iii)] 
$\big\langle  \cB  x | y    \big\rangle  
  \le \eta  \big\|  \cQ^\dagger \cB x  \big\|_{\cQ}^2
  +\frac{1}{4\eta}  \big\| y\big\|_\cQ^2$, $\forall \eta\in\ ]0,+\infty[$, $\forall (x,y) \in \cH \times \cH$, 
\end{itemize}
where $\nu$ is defined in Remark \ref{r_assume_1}-(i).
\end{fact}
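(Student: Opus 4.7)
The plan is to reduce all three inequalities to a single structural observation: because $\ran\cB \subseteq \ran\cQ$ by Assumption \ref{assume_1}-(vi), for every $x\in\cH$ there is a unique $w := \cQ^\dagger \cB x \in (\ker\cQ)^\perp = \ran\cQ$ with $\cQ w = \cB x$. I would state this first and then peel off the three inequalities from it.

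For (i), I would compute $\|w\|_\cQ^2 = \langle \cQ w \mid w\rangle = \langle \cB x \mid w\rangle$ and bound this by Cauchy--Schwarz, obtaining $\|w\|_\cQ^2 \le \|\cB x\|\cdot \|w\|$. The closed-range bound from Remark \ref{r_assume_1}-(i) then gives $\nu \|w\| \le \|\cQ w\| = \|\cB x\|$, since $w\in\ran\cQ$. Combining the two yields $\|w\|_\cQ^2 \le \|\cB x\|^2/\nu$, which is exactly (i).

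For (ii) and (iii), I would use the positive square root $\cQ^{1/2}$ (which exists because $\cQ\in\bbS_+$) and rewrite
\[
\langle \cB x \mid y \rangle = \langle \cQ w \mid y \rangle = \langle \cQ^{1/2} w \mid \cQ^{1/2} y \rangle
\le \|\cQ^{1/2} w\|\cdot \|\cQ^{1/2} y\| = \|w\|_\cQ \cdot \|y\|_\cQ,
\]
using self-adjointness of $\cQ^{1/2}$ and the standard Cauchy--Schwarz inequality. This is (ii). Then (iii) follows by applying Young's inequality $ab \le \eta a^2 + \tfrac{1}{4\eta} b^2$ with $a=\|w\|_\cQ$ and $b=\|y\|_\cQ$.

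The only subtle step is the very first one: verifying that $w=\cQ^\dagger \cB x$ satisfies $\cQ w = \cB x$ and $w\in\ran\cQ$ in this degenerate setting. This uses the standard Moore--Penrose identities $\cQ\cQ^\dagger = \cP_{\ran\cQ}$ together with Assumption \ref{assume_1}-(vi) to ensure $\cP_{\ran\cQ}\cB x = \cB x$. Once this is in place, the rest is routine Cauchy--Schwarz and Young's inequality. I do not expect any real obstacle; the only care needed is to keep the roles of $\ran\cQ$ and $(\ker\cQ)^\perp$ straight throughout.
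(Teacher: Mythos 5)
Your proposal is correct and follows essentially the same route as the paper: both rest on $\cQ\cQ^\dagger = \cP_{\ran\cQ}$ together with $\ran\cB\subseteq\ran\cQ$ to write $\cB x=\cQ\cQ^\dagger\cB x$, then use Cauchy--Schwarz with $\sqrt{\cQ}$ for (ii), Young's (Fenchel--Young) inequality for (iii), and the closed-range lower bound $\|\cQ w\|\ge\nu\|w\|$ on $\ran\cQ$ for (i). Your version of (i) merely makes explicit the Cauchy--Schwarz step that the paper leaves implicit in its citation of Remark \ref{r_assume_1}-(i).
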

\begin{proof}
(i) Since $\cB x\in \ran \cQ$, we have
\begin{eqnarray}
 \big\| \cB  x   \big\|^2 &=& 
 \big\|\cP_{\ran\cQ} ( \cB  x )\big\|^2 
\quad \text{[$\cP_{\ran\cQ}$---projection onto $\ran\cQ$]}
\nonumber \\
&= &  \big\| \cQ \cQ^\dagger   \cB  x \big\|^2 
   \quad \text{[by $\cP_{\ran \cQ} = \cQ \cQ^\dagger$]}
\nonumber\\
& \ge & \nu  \big\|  \cQ^\dagger \cB x  \big\|_{\cQ}^2. 
\quad \text{[by Remark \ref{r_assume_1}-(i)]}
\nonumber
\end{eqnarray}

(ii) $\big\langle  \cB  x | y    \big\rangle  = 
 \big\langle   \cQ \cQ^\dagger  \cB  x | y    \big\rangle 
 =    \big\langle  \sqrt{ \cQ} \cQ^\dagger  \cB  x | 
 \sqrt{ \cQ} y  \big\rangle 
  \le  \big\|  \cQ^\dagger \cB x  \big\|_{\cQ} \cdot 
\big\| y\big\|_\cQ$.

(iii) By (ii) and {\red Fenchel--Young inequality}\footnote{\red The Fenchel--Young inequality reads $f(a)+f^*(b) \ge \langle a|b\rangle$ for a proper function $f$ \cite[Proposition 13.15]{plc_book}. Particularly, if $f = \eta \|\cdot\|^2$ with $\eta>0$, it becomes $\langle a|b \rangle \le \eta\|a\|^2 + \frac{1}{4\eta} \|b\|^2$, $\forall \eta >0$, from which follows Fact \ref{f_1}-(iii).}, we have,  $\forall \eta\in\ ]0,+\infty[$:
\[
\big\langle  \cB  x | y    \big\rangle  = 
    \big\langle  \sqrt{ \cQ} \cQ^\dagger  \cB  x | 
 \sqrt{ \cQ} y  \big\rangle 
  \le \eta  \big\|  \cQ^\dagger \cB x  \big\|_{\cQ}^2
  +\frac{1}{4\eta}  \big\| y\big\|_\cQ^2.
\]
\hfill 
\end{proof}

\begin{lemma} \label{l_t}
Under Assumption \ref{assume_1}, the G-FBS operator given in \eqref{gfbs} can also be written as
\[
\cT = (\cA+\cQ)^{-1} \cQ \circ (\cI - \cQ^\dagger \cB).
\]
\end{lemma}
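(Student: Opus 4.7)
The plan is to show the two operators agree pointwise by pulling $\cQ$ out of the expression $\cQ - \cB$ inside the resolvent. Concretely, for arbitrary $x \in \cH$, I aim to verify the identity
\[
(\cQ - \cB) x \;=\; \cQ \bigl( x - \cQ^\dagger \cB x \bigr),
\]
after which applying $(\cA+\cQ)^{-1}$ to both sides, and noting that Assumption~\ref{assume_1}-(vii) guarantees $(\cA+\cQ)^{-1}$ is well-defined on $\ran(\cQ-\cB)$, yields the claim.

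The identity above is equivalent to $\cB x = \cQ \cQ^\dagger \cB x$, so everything reduces to showing that $\cQ \cQ^\dagger$ acts as the identity on $\cB x$. First, I would recall from Assumption~\ref{assume_1}-(iv) that $\ran \cQ$ is closed, so by standard properties of the Moore--Penrose pseudo-inverse of a bounded self-adjoint operator (see, e.g., \cite[Prop.~3.30]{plc_book}), the composition $\cQ \cQ^\dagger$ coincides with the orthogonal projection $\cP_{\ran \cQ}$ onto $\ran \cQ$; this is exactly the identity used inside the proof of Fact~\ref{f_1}-(i). Second, Assumption~\ref{assume_1}-(vi) tells us $\cB x \in \ran \cB \subseteq \ran \cQ$, so $\cP_{\ran \cQ}(\cB x) = \cB x$, which is the needed equality.

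I would then conclude by substitution: for every $x \in \cH$,
\[
\cT x = (\cA+\cQ)^{-1}(\cQ - \cB)x = (\cA+\cQ)^{-1} \cQ \bigl( x - \cQ^\dagger \cB x \bigr) = \bigl[ (\cA+\cQ)^{-1}\cQ \circ (\cI - \cQ^\dagger \cB) \bigr] x,
\]
so that $\cT = (\cA+\cQ)^{-1}\cQ \circ (\cI - \cQ^\dagger \cB)$ as operators on $\cH$.

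The only subtle point — and hence the main obstacle, though a minor one — is the correct use of $\cQ^\dagger$ for a degenerate self-adjoint operator: without Assumption~\ref{assume_1}-(iv) the range of $\cQ$ need not be closed and $\cQ\cQ^\dagger$ need not equal $\cP_{\ran\cQ}$ on all of $\cH$. Once the closed-range assumption is invoked, together with the range-inclusion $\ran \cB \subseteq \ran \cQ$, the rest is a one-line algebraic identity.
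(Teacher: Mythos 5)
Your proof is correct and follows essentially the same route as the paper's: both reduce the claim to the identity $\cB x = \cQ\cQ^\dagger \cB x$, which holds because $\cQ\cQ^\dagger = \cP_{\ran\cQ}$ and $\ran\cB \subseteq \ran\cQ$ by Assumption~\ref{assume_1}-(vi). Your additional remark that the closed-range hypothesis (Assumption~\ref{assume_1}-(iv)) is what legitimizes $\cQ\cQ^\dagger = \cP_{\ran\cQ}$ is a welcome clarification that the paper leaves implicit.
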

\begin{proof}
We have, $\forall  x \in  \cH$:
\begin{eqnarray}
\cT x &=& (\cA+\cQ)^{-1} (\cQ-\cB) x 
= (\cA+\cQ)^{-1} (\cQ x - \cB x) 
\nonumber \\
&= & (\cA+\cQ)^{-1} (\cQ x - \cP_{\ran \cQ} (\cB x) ) 
\quad \text{[by $\ran \cB \subseteq \ran \cQ$]}
\nonumber \\
&= & (\cA+\cQ)^{-1} (\cQ x - \cQ  \cQ^\dagger \cB x ) 
\quad \text{[by $\cP_{\ran \cQ} = \cQ \cQ^\dagger$]}
\nonumber \\
&= & (\cA+\cQ)^{-1} \cQ ( x -  \cQ^\dagger \cB x ).
\nonumber
\end{eqnarray}
Thus, the desired result follows, since $x$ is arbitrary. \hfill
\end{proof}

\vskip.1cm
In particular, if $\cQ$ is non-degenerate, it is easy to derive 
$ \cT = (\cI+\cQ^{-1}\cA)^{-1}  (\cI - \cQ^{-1} \cB)
= J_{\cQ^{-1}\cA} \circ  (\cI - \cQ^{-1} \cB)$, where $J_{\cQ^{-1}\cA} $ denotes the resolvent of $\cQ^{-1}\cA$. However, for degenerate case, $(\cA+\cQ)^{-1}\cQ$ cannot be viewed as a well-defined resolvent since $\ran(\cA+\cQ) \supseteq \ran\cQ$ and  $\ran(\cA+\cQ) =\cH$ may not hold.

\vskip.1cm
\begin{lemma} \label{l_cocoercive}
Under Assumption \ref{assume_1},  the following hold.
\begin{itemize}
\item[\rm (i)] $\cQ^{\dagger} \cB$ is $\cQ$-based  $ \frac{\nu} {\beta}$-cocoercive; 
\item[\rm (ii)] $\cQ^{\dagger} \cB$ is $\cQ$-based  $ \frac{\beta} {\nu} $-Lipschitz continuous,
\end{itemize}
where $\nu$ is specified in Remark \ref{r_assume_1}-(i).
\end{lemma}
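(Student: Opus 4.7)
The plan is to reduce both statements to the classical cocoercivity of $\cB$ in the ambient norm together with the restricted lower bound on $\cQ$ recalled in Remark \ref{r_assume_1}-(i), exploiting the crucial range inclusion $\ran \cB \subseteq \ran \cQ$ from Assumption \ref{assume_1}-(vi).

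For (i), I would start by unfolding the $\cQ$-based cocoercivity inequality and writing, for arbitrary $x_1,x_2 \in \cH$,
\[
\langle \cQ(x_1-x_2) \,|\, \cQ^\dagger(\cB x_1 - \cB x_2) \rangle
 = \langle x_1-x_2 \,|\, \cQ \cQ^\dagger(\cB x_1 - \cB x_2) \rangle,
\]
using that $\cQ$ and (hence) $\cQ^\dagger$ are self-adjoint. Because $\cQ \cQ^\dagger = \cP_{\ran \cQ}$ and Assumption \ref{assume_1}-(vi) yields $\cB x_1 - \cB x_2 \in \ran \cQ$, the right-hand side collapses to $\langle x_1-x_2 \,|\, \cB x_1 - \cB x_2 \rangle$. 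The $\beta^{-1}$-cocoercivity of $\cB$ then bounds this from below by $\beta^{-1}\|\cB x_1 - \cB x_2\|^2$, and Fact \ref{f_1}-(i) turns the ambient norm into a $\cQ$-norm on $\cQ^\dagger(\cB x_1 - \cB x_2)$, producing the factor $\nu/\beta$ required by Definition \ref{def_nonexpansive}-(iv).

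For (ii), the natural route is Cauchy--Schwarz in the $\cQ$-semi-inner product: from (i) one has
\[
\tfrac{\nu}{\beta}\,\bigl\|\cQ^\dagger(\cB x_1 - \cB x_2)\bigr\|_\cQ^2
\le \langle \cQ(x_1-x_2) \,|\, \cQ^\dagger(\cB x_1 - \cB x_2) \rangle
\le \|x_1-x_2\|_\cQ \cdot \bigl\|\cQ^\dagger(\cB x_1 - \cB x_2)\bigr\|_\cQ.
\]
Dividing by $\|\cQ^\dagger(\cB x_1 - \cB x_2)\|_\cQ$ (the case of equality to zero being trivial) yields the $\cQ$-based $(\beta/\nu)$-Lipschitz estimate, matching Definition \ref{def_nonexpansive}-(i).

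I do not anticipate a serious obstacle: everything hinges on the single algebraic identity $\cQ \cQ^\dagger (\cB x_1 - \cB x_2) = \cB x_1 - \cB x_2$, which is precisely where Assumption \ref{assume_1}-(vi) is used, and on the restricted coercivity $\|\cQ x\| \ge \nu \|x\|$ on $\ran \cQ$. The only subtlety worth flagging is that the seminorm $\|\cdot\|_\cQ$ is genuinely degenerate, so Cauchy--Schwarz in the $\cQ$-inner product still holds (since $\cQ \in \bbS_+$ admits a square root), but one cannot cancel terms of zero $\cQ$-norm by division without handling that degenerate case separately, which is why I split it off as a trivial subcase.
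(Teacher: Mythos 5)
Your proposal is correct and follows essentially the same route as the paper: the paper's proof is a single chain of inequalities combining Fact \ref{f_1}-(ii) (Cauchy--Schwarz in the $\cQ$-seminorm), the $\beta^{-1}$-cocoercivity of $\cB$, and Fact \ref{f_1}-(i), from which both (i) and (ii) are read off via Definition \ref{def_nonexpansive}. You merely reorganize this by first making explicit the identification $\langle \cQ(x_1-x_2)\,|\,\cQ^\dagger(\cB x_1-\cB x_2)\rangle=\langle x_1-x_2\,|\,\cB x_1-\cB x_2\rangle$ (which the paper leaves implicit) and then deducing (ii) from (i), with the same hypotheses used in the same places.
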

\begin{proof}
We deduce that:
\begin{eqnarray}
 \big\| \bb_1 - \bb_2 \big\|_\cQ \cdot  
\big\| \cQ^\dagger \cB \bb_1 - \cQ^\dagger \cB \bb_2 \big\|_\cQ 
& \ge &   \big\langle \bb_1 - \bb_2 \big| 
\cB \bb_1 - \cB \bb_2 \big\rangle 
\quad \text{[by Fact \ref{f_1}-(ii)]}
\nonumber \\ 
& \ge &  \frac{1}{\beta} \big\| \cB \bb_1 - \cB \bb_2 \big\|^2
\quad \text{[by cocoerciveness of $\cB$]}
\nonumber\\
& \ge &  \frac{\nu}{\beta} \big\|  \cQ^\dagger \cB \bb_1 - 
\cQ^\dagger \cB \bb_2 \big\|_{\cQ}^2. 
\quad \text{[by Fact \ref{f_1}-(i)]}
\nonumber 
\end{eqnarray}
The proof is completed by Definition \ref{def_nonexpansive}.
\hfill
\end{proof}

\vskip.1cm
Lemma \ref{l_cocoercive} extends the classical Baillon-Haddad theorem \cite[Corollary 18.17]{plc_book} to the degenerate metric case. If $\cQ$ is non-degenerate, this result still holds, by replacing the pseudo-inverse $\cQ^\dagger$ with a simple inverse $\cQ^{-1}$. In addition, Lemma \ref{l_cocoercive} gives an answer to the question raised in \cite[Theorem 3.2]{condat_tour}, where it is required to check the cocoerciveness of $\cQ^{-1}\cB$ in the context of  non-degenerate $\cQ$.

\subsection{Nonexpansive properties}
The nonexpansive properties are given below.
\begin{lemma} \label{l_T}
Given the operator $\cT: \cH\mapsto \cH$ defined in \eqref{gfbs}, then, under Assumption \ref{assume_1}, the following hold.
\begin{itemize}
\item [\rm (i)]    $\big\| \cT \bb_1 - \cT \bb_2  \big\|_\cQ^2 \le 
 \langle \cQ( \bb_1 - \bb_2) | \cT\bb_1 - \cT \bb_2 \rangle
-  \langle \cB \bb_1 - \cB \bb_2 | \cT\bb_1-\cT\bb_2 \rangle$;

\item[\rm (ii)] $\langle \cQ( \bb_1 - \bb_2) | \cT\bb_1 - \cT \bb_2 \rangle  \ge   \big\| \cT \bb_1 - \cT \bb_2  \big\|_\cQ^2   - \frac{\beta}{4\nu} \big\| (\cI - \cT)  \bb_1 -
 (\cI - \cT) \bb_2  \big\|_\cQ^2$; 

\item[\rm (iii)] $ \big\| \cT \bb_1 - \cT \bb_2  \big\|_\cQ^2  
+ \Big( 1-\frac{\beta}{2 \nu }  \Big)
 \big\|(\cI- \cT) \bb_1 - (\cI -\cT) \bb_2  \big\|_\cQ^2  
\le  \big\| \bb_1 -  \bb_2  \big\|_\cQ^2 $,
\end{itemize}
where $\nu$ is defined in Remark \ref{r_assume_1}-(i).
\end{lemma}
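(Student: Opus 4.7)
The plan is to establish (i), (ii), (iii) in sequence, since each inequality feeds directly into the next.

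For~(i), I would use the inclusion form of the G-FBS operator: by definition, for $j \in \{1,2\}$, $\cQ \bb_j - \cB \bb_j - \cQ \cT \bb_j \in \cA (\cT \bb_j)$. Pairing these two inclusions via the monotonicity of $\cA$ (Assumption~\ref{assume_1}-(i)), tested against $\cT\bb_1 - \cT\bb_2$, gives
\[
\langle \cQ(\bb_1 - \bb_2) - \cQ(\cT\bb_1 - \cT\bb_2) - (\cB\bb_1 - \cB\bb_2) \mid \cT\bb_1 - \cT\bb_2 \rangle \ge 0,
\]
and isolating $\|\cT\bb_1 - \cT\bb_2\|_\cQ^2 = \langle \cQ(\cT\bb_1 - \cT\bb_2) \mid \cT\bb_1 - \cT\bb_2\rangle$ yields precisely (i).

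For~(ii), starting from~(i), the remaining task is to lower-bound $\langle \cB\bb_1 - \cB\bb_2 \mid \cT\bb_1 - \cT\bb_2\rangle$ by $-\tfrac{\beta}{4\nu}\|(\cI-\cT)\bb_1 - (\cI-\cT)\bb_2\|_\cQ^2$. I would write $\cT\bb_1 - \cT\bb_2 = (\bb_1 - \bb_2) - \bigl((\cI-\cT)\bb_1 - (\cI-\cT)\bb_2\bigr)$ and split the inner product accordingly. Cocoerciveness of $\cB$ lower-bounds $\langle \cB\bb_1 - \cB\bb_2 \mid \bb_1 - \bb_2\rangle$ by $\tfrac{1}{\beta}\|\cB\bb_1 - \cB\bb_2\|^2$. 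The key observation is that $\cB\bb_1 - \cB\bb_2 \in \ran\cQ$, since $\ran\cQ$ is a linear subspace and $\ran\cB \subseteq \ran\cQ$ under Assumption~\ref{assume_1}-(vi); consequently the proofs of Fact~\ref{f_1}-(i) and Fact~\ref{f_1}-(iii) go through verbatim with $\cB x$ replaced by $\cB\bb_1 - \cB\bb_2$. Their differenced versions, chained together, upper-bound $\langle \cB\bb_1 - \cB\bb_2 \mid (\cI-\cT)\bb_1 - (\cI-\cT)\bb_2 \rangle$ by $\tfrac{\eta}{\nu}\|\cB\bb_1 - \cB\bb_2\|^2 + \tfrac{1}{4\eta}\|(\cI-\cT)\bb_1 - (\cI-\cT)\bb_2\|_\cQ^2$ for any $\eta > 0$. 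Tuning $\eta = \nu/\beta$ exactly cancels the $\|\cB\bb_1 - \cB\bb_2\|^2$ contributions, leaving the required coefficient $\tfrac{\beta}{4\nu}$ in front of the surviving term.

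For~(iii), I would invoke the polarization identity $2\langle \cQ a \mid b\rangle = \|a\|_\cQ^2 + \|b\|_\cQ^2 - \|a-b\|_\cQ^2$, which follows from expanding $\|a-b\|_\cQ^2$ using the self-adjointness of $\cQ$ and holds even when $\cQ$ is only positive semidefinite. Taking $a = \bb_1 - \bb_2$ and $b = \cT\bb_1 - \cT\bb_2$, so that $a - b = (\cI-\cT)\bb_1 - (\cI-\cT)\bb_2$, substituting this identity into twice of~(ii), and collecting terms yields (iii) in a single rearrangement. The main obstacle I anticipate is merely the promotion of Fact~\ref{f_1}-(i) and -(iii) from a single image $\cB x$ to a difference $\cB\bb_1 - \cB\bb_2$; this is not a literal consequence of the statements, but the underlying proofs transfer because $\ran\cQ$ is closed under subtraction. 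Once this bookkeeping is accepted, the rest is a clean chain of monotonicity, cocoerciveness, an optimally tuned Young inequality, and the polarization identity.
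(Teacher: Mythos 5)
Your proposal is correct and follows essentially the same route as the paper: (i) via the inclusion $(\cQ-\cB)\bb_j \in (\cA+\cQ)\cT\bb_j$ and monotonicity of $\cA$; (ii) via the split $\cT = \cI - (\cI-\cT)$, cocoercivity of $\cB$, and a Young-type inequality tuned at $\eta=\nu/\beta$ (the paper phrases the cancellation in terms of $\|\cQ^\dagger\cB\bb_1-\cQ^\dagger\cB\bb_2\|_\cQ^2$ via its Lemma on $\cQ$-based cocoercivity of $\cQ^\dagger\cB$, you in terms of $\|\cB\bb_1-\cB\bb_2\|^2$ — the same computation); (iii) by the same expansion of $\|(\cI-\cT)\bb_1-(\cI-\cT)\bb_2\|_\cQ^2$. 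Your explicit remark that Fact 1 must be promoted from $\cB x$ to the difference $\cB\bb_1-\cB\bb_2$ (valid since $\ran\cQ$ is a subspace) is a point the paper glosses over but also relies on.
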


\begin{proof} 
(i) We develop
\begin{eqnarray}
&&  \big\| \cT \bb_1 - \cT \bb_2  \big\|_\cQ^2 
\nonumber \\  &=& \big  \langle \cQ \cT \bb_1 - \cQ \cT \bb_2 | \cT \bb_1 - \cT \bb_2 \big  \rangle 
\nonumber \\ 
&  \le &\big \langle \cQ \cT \bb_1  - \cQ \cT \bb_2 | \cT \bb_1- \cT \bb_2 \big  \rangle + 
\big \langle \cA \cT \bb_1 - \cA \cT \bb_2 | \cT\bb_1 - \cT\bb_2 \big \rangle 
 \quad \text{[by monotonicity of $\cA$]}
\nonumber \\ &= & 
\big  \langle (\cQ  - \cB) \bb_1 - (\cQ - \cB) \bb_2 | \cT\bb_1 - \cT\bb_2\big  \rangle
\quad \text{[since $\cQ - \cB \in \cQ \cT +\cA \cT $ by \eqref{gfbs}]}
\nonumber \\ 
& = & 
 \big \langle \cQ(\bb_1 - \bb_2) | \cT\bb_1-\cT\bb_2 \big  \rangle
- \big \langle \cB \bb_1 - \cB \bb_2 | \cT\bb_1-\cT\bb_2 \big \rangle.
 \nonumber 
\end{eqnarray}

\vskip.1cm
(ii) Denoting $\cR :=\cI - \cT$, we deduce that
\begin{eqnarray}
&&  \big \langle \cB \bb_1 - \cB \bb_2 | \cT\bb_1 - \cT \bb_2  \big \rangle  
\nonumber \\  &=& \big \langle \cB \bb_1 - \cB \bb_2 | \bb_1 -   \bb_2  \big \rangle -
 \big \langle \cB \bb_1 - \cB \bb_2 | \cR\bb_1 - \cR \bb_2  \big \rangle
  \quad \text{[by  $\cT = \cI - \cR$]}
\nonumber \\ 
&  \ge & \frac{\nu} {\beta} \big\|\cQ^\dagger \cB \bb_1  - \cQ^\dagger \cB \bb_2 \big\|_\cQ^2 
-  \big \langle \cB \bb_1 - \cB \bb_2 | \cR\bb_1 - \cR \bb_2  \big \rangle \quad \text{[by Lemma \ref{l_cocoercive}]}
\nonumber \\ 
&  \ge &  \frac{\nu} {\beta} \big\|\cQ^\dagger \cB \bb_1  - \cQ^\dagger \cB \bb_2 \big\|_\cQ^2  
- \frac{\nu}{\beta} \big\|\cQ^\dagger \cB \bb_1  -\cQ^\dagger \cB \bb_2 \big\|_\cQ^2 
- \frac{\beta}{4\nu} \big\| \cR \bb_1  - \cR \bb_2 \big\|_\cQ^2 
 \quad \text{[by Fact \ref{f_1}-(iii)]}
\nonumber \\ 
& = & - \frac{\beta}{4\nu} \big\| \cR \bb_1  - \cR \bb_2 \big\|_\cQ^2 .
\nonumber 
\end{eqnarray}
Then, (ii) follows by substituting the above into (i).

\vskip.1cm
(iii) We develop
\begin{eqnarray}
&& \big\| \cT \bb_1 - \cT \bb_2  \big\|_\cQ^2  
+ \big\|(\cI- \cT) \bb_1 - (\cI -\cT) \bb_2  \big\|_\cQ^2  
\nonumber \\ 
& = & 2  \big\| \cT \bb_1 - \cT \bb_2  \big\|_\cQ^2  
+ \big\| \bb_1 -  \bb_2  \big\|_\cQ^2  
-2 \big\langle \cQ(\bb_1 -  \bb_2) | \cT \bb_1 - \cT \bb_2 \big\rangle
\nonumber \\
& \le & \big\| \bb_1 -  \bb_2  \big\|_\cQ^2  
+\frac{\beta}{2 \nu}   \big\| (\cI -  \cT)  \bb_1 - 
(\cI -  \cT)  \bb_2 \big\|_\cQ^2,
\quad \text{[by (ii)]}
\nonumber 
\end{eqnarray}
which yields (iii). \hfill 
\end{proof}

\begin{theorem}  \label{t_T}
Let $\cT$ be defined as \eqref{gfbs}. Under Assumption \ref{assume_1}, if $\nu$ specified in Remark \ref{r_assume_1}-(ii) satisfies $\nu> \beta/  2 $,  then, the following hold.
\begin{itemize}
\item[\rm (i)] $\cT$ is $\cQ$-based $\frac{2\nu } {4 \nu - \beta  } $-averaged.

\item[\rm (ii)] $\cT$  is always $\cQ$-nonexpansive, and in particular,  $\cQ$-firmly nonexpansive, if and only if $\beta=0$.

\item[\rm (iii)]  $\cI - \cT$ is $\cQ$-based $(1-\frac{\beta}{4\nu})$-cocoercive.

\item[\rm (iv)] $\cI - \gamma(\cI - \cT)$ is $\cQ$-based $\frac{2 \gamma \nu }  {4 \nu - \beta} $-averaged, if  $\gamma \in \ ]0, 2-\frac{\beta}{2 \nu}[$. 

\item[\rm (v)] $\cI - \gamma(\cI - \cT)$  is $\cQ$-nonexpansive, and in particular, $\cQ$-firmly nonexpansive,  if    $\gamma \in\  ]0, 1-\frac{\beta}{4 \nu } [$.
\end{itemize}
\end{theorem}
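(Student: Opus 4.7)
The plan is to deduce each of the five statements directly from Lemma \ref{l_T}--(iii), which already contains the essential ``descent'' estimate, together with the characterization of $\cQ$-averagedness in \eqref{ave}. No new operator inequalities need to be proved; only algebraic manipulation is required.

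For part (i), I would match the inequality
\[
\big\|\cT x_1 - \cT x_2\big\|_\cQ^2 + \Big(1-\frac{\beta}{2\nu}\Big) \big\|(\cI-\cT)x_1-(\cI-\cT)x_2\big\|_\cQ^2 \le \big\|x_1-x_2\big\|_\cQ^2
\]
from Lemma \ref{l_T}--(iii) against the averaged-operator characterization \eqref{ave}, and solve $\frac{1-\alpha}{\alpha}=1-\frac{\beta}{2\nu}$ to obtain $\alpha=\frac{2\nu}{4\nu-\beta}$. The hypothesis $\nu>\beta/2$ makes $\alpha\in\ ]0,1[$, which is exactly what is required by Definition \ref{def_nonexpansive}--(v). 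For part (ii), $\cQ$-nonexpansiveness is immediate by dropping the non-negative second term in Lemma \ref{l_T}--(iii) (using $\nu>\beta/2$), while $\cQ$-firm nonexpansiveness is equivalent to the averagedness coefficient being $\le 1/2$; substituting the value of $\alpha$ from (i), $\frac{2\nu}{4\nu-\beta}\le \frac{1}{2}$ reduces to $\beta\le 0$, i.e.\ $\beta=0$, giving the ``iff''.

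For part (iii), I would set $D:=x_1-x_2$ and $R:=(\cI-\cT)x_1-(\cI-\cT)x_2$, expand
\[
\|\cT x_1-\cT x_2\|_\cQ^2 = \|D-R\|_\cQ^2 = \|D\|_\cQ^2 - 2\langle \cQ D\,|\,R\rangle + \|R\|_\cQ^2,
\]
plug this into Lemma \ref{l_T}--(iii), and cancel $\|D\|_\cQ^2$ on both sides; what remains is exactly
\[
\big\langle \cQ D\,\big|\,R\big\rangle \ \ge\ \Big(1-\tfrac{\beta}{4\nu}\Big)\|R\|_\cQ^2,
\]
which is the desired $\cQ$-based cocoerciveness of $\cI-\cT$ with constant $1-\frac{\beta}{4\nu}$ (positive because $\nu>\beta/2>\beta/4$).

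For parts (iv) and (v), I would use the standard fact, transcribed verbatim into the $\cQ$-metric setting, that if an operator $\cT$ is $\alpha$-averaged, then the relaxation $(1-\gamma)\cI+\gamma\cT = \cI-\gamma(\cI-\cT)$ is $\gamma\alpha$-averaged for every $\gamma\in\ ]0,1/\alpha[$. Substituting $\alpha=\frac{2\nu}{4\nu-\beta}$ from (i) gives the admissible range $\gamma\in\ ]0,\,2-\tfrac{\beta}{2\nu}[$ and averagedness constant $\frac{2\gamma\nu}{4\nu-\beta}$, proving (iv); $\cQ$-nonexpansiveness of the relaxation follows as in (ii), and $\cQ$-firm nonexpansiveness is equivalent to the averagedness constant being $\le 1/2$, i.e.\ $\gamma\le \frac{4\nu-\beta}{4\nu}=1-\frac{\beta}{4\nu}$, yielding (v). The main obstacle is essentially accounting: ensuring that the inequalities in Lemma \ref{l_T}--(iii) remain meaningful in the degenerate setting (where $\|\cdot\|_\cQ$ is only a semi-norm) so that the whole averagedness-under-relaxation bookkeeping still goes through; but since all the steps are purely arithmetic identities and the semi-norm Cauchy--Schwarz already used in Lemma \ref{l_T} suffices, no further subtlety arises.
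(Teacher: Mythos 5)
Your proposal is correct and follows essentially the same route as the paper: parts (i), (ii), (iv) and (v) match the paper's proof step for step (identify $\alpha$ from Lemma \ref{l_T}--(iii) via \eqref{ave}, note $\alpha\in[\tfrac12,1[$, and invoke the relaxation rule for averaged operators), while in (iii) you merely unpack by direct expansion the equivalence between $\alpha$-averagedness of $\cT$ and $\tfrac{1}{2\alpha}$-cocoercivity of $\cI-\cT$ that the paper obtains by citing \cite[Proposition 4.39]{plc_book}. No gaps.
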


{\red 
\begin{proof}
(i) Comparing Lemma \ref{l_T}--(iii) with \eqref{ave}, the averagedness $\alpha$ satisfies $\frac{1-\alpha}{\alpha} = 1-\frac{\beta}{2\nu}$, which yields $\alpha = \frac{2\nu } {4 \nu - \beta  } $.

\vskip.1cm
(ii) If $\nu>\beta/2$, the averagedness $\alpha = \frac{2\nu } {4 \nu - \beta} \in [\frac{1}{2}, 1[$, which shows that $\cT$ is always $\cQ$-nonexpansive, and in particular, $\cQ$-firmly nonexpansive, only when $\alpha=\frac{1}{2}$, i.e., $\beta = 0$.

\vskip.1cm
(iii) By \cite[Proposition 4.39]{plc_book}, $\cT$ is $\cQ$-based $\alpha$-averaged, if and only if $\cI - \cT$ is $\cQ$-based $\frac{1}{2\alpha}$-cocoercive. Substituting $\alpha = \frac{2\nu } {4 \nu - \beta  } $ according to (i) completes the proof.

\vskip.1cm
(iv)--(v): direct results of applying \cite[Proposition 4.40]{plc_book} to (i).
\hfill 
\end{proof} }

\begin{remark} \label{r_nonexpansive}
{\rm (i)} Lemma \ref{l_T} and Theorem \ref{t_T} also hold for non-degenerate case, where  the condition $\nu >\beta/ 2$  is simply equivalent to $\cQ \succ \frac{\beta}{2} \cI$.

{\rm (ii)} If $\beta=0$ (i.e., $\cB=0$), the basic nonexpansive properties of the warped resolvent $\cT = (\cA+\cQ)^{-1} \cQ$ are recovered: (1) $\cT$ is $\cQ$-based $\frac{1}{2}$-averaged ($\cQ$-firmly nonexpansive); (2)  $\cI - \gamma(\cI - \cT)$ is $\frac{\gamma}{2}$-averaged, if $\gamma \in\ ]0, 2[$.
\end{remark}

\subsection{Another view on the averagedness}
Lemma \ref{l_T} can also be verified by the composition of  $\cT$.

\begin{proposition} \label{p_com}
Given $\cT$ given as Lemma \ref{l_t}, under Assumption \ref{assume_1}, the following hold.
\begin{itemize}
\item[\rm (i)] $(\cA+\cQ)^{-1} \cQ$ is $\frac{1}{2}$-averaged (or simply $\cQ$-firmly nonexpansive).

\item[\rm (ii)] $\cI - \cQ^\dagger  \cB$ is $\cQ$-based $\frac {\beta} {2\nu }$-averaged, if $\nu > \beta/ 2$. In particular, if $\nu \ge  \beta$,  $\cI - \cQ^\dagger  \cB$ is $\cQ$-firmly nonexpansive.

\item[\rm (iii)]  $\cT$ is $\cQ$-based $\frac{2\nu} {4 \nu -\beta }$-averaged, if $\nu > \beta/ 2$. 
\end{itemize}
\end{proposition}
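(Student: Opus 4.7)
The plan is to prove the three items in turn, exploiting the factorization $\cT = (\cA+\cQ)^{-1}\cQ \circ (\cI - \cQ^\dagger \cB)$ supplied by Lemma \ref{l_t}, and then combining (i) and (ii) through a composition-of-averaged-operators argument to get (iii).

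For part (i), I would work directly from the definition of the warped resolvent. Fix $x_1, x_2 \in \cH$ and set $u_i := (\cA+\cQ)^{-1}\cQ x_i$, which is well-defined under Assumption \ref{assume_1}-(vii). The inclusion defining $u_i$ gives $\cQ(x_i - u_i) \in \cA u_i$, and monotonicity of $\cA$ then yields
\[
\langle \cQ(x_1-u_1) - \cQ(x_2-u_2) \mid u_1 - u_2\rangle \ge 0,
\]
which, after rearranging, is exactly $\langle \cQ(x_1-x_2) \mid u_1-u_2\rangle \ge \|u_1-u_2\|_\cQ^2$, i.e., $\cQ$-firm nonexpansiveness. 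By Definition \ref{def_nonexpansive}-(iv)--(v), this corresponds to $\cQ$-based $\tfrac{1}{2}$-averagedness.

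For part (ii), I would invoke Lemma \ref{l_cocoercive}-(i), which gives that $\cQ^\dagger \cB$ is $\cQ$-based $(\nu/\beta)$-cocoercive. The $\cQ$-based analogue of the classical fact that $\cI - \gamma \cC$ is $\gamma L/2$-averaged whenever $\cC$ is $(1/L)$-cocoercive and $\gamma \in \ ]0, 2/L[$ (which the excerpt has already announced is a valid extension, following \cite[Propositions 4.35, 4.39, 4.40]{plc_book}) then gives, with $\gamma = 1$ and $L = \beta/\nu$, that $\cI - \cQ^\dagger \cB$ is $\cQ$-based $\tfrac{\beta}{2\nu}$-averaged whenever $\tfrac{\beta}{2\nu} < 1$, i.e., $\nu > \beta/2$. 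The special case $\nu \ge \beta$ then gives averagedness constant $\le \tfrac{1}{2}$, which by Definition \ref{def_nonexpansive} is $\cQ$-firm nonexpansiveness.

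For part (iii), I would apply the composition rule for averaged operators (the $\cQ$-based analogue of \cite[Proposition 4.46]{plc_book}) to (i) and (ii), with averagedness constants $\alpha_1 = \tfrac{1}{2}$ and $\alpha_2 = \tfrac{\beta}{2\nu}$. A direct substitution into the standard formula $\alpha = \tfrac{\alpha_1+\alpha_2-2\alpha_1\alpha_2}{1-\alpha_1\alpha_2}$ collapses the numerator to $\tfrac{1}{2}$ (the $\alpha_2$ terms cancel) and leaves $\alpha = \tfrac{2\nu}{4\nu-\beta}$, as required. The main obstacle I anticipate is not the arithmetic but rather the justification that the classical composition identity continues to hold when the underlying inner product is only a semi-inner product (because $\cQ$ is degenerate). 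I would handle this by inspecting the short proof of \cite[Proposition 4.46]{plc_book}: it rests solely on Cauchy--Schwarz-type manipulations and the parallelogram identity applied to the norm, and both remain valid with $\|\cdot\|_\cQ$ in place of $\|\cdot\|$ (indeed the paper has already flagged this extension as routine). One should also verify consistency with Theorem \ref{t_T}-(i), which provides the same constant $\tfrac{2\nu}{4\nu-\beta}$, so (iii) serves as an independent, composition-based derivation of the same averagedness result.
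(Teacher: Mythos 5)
Your proposal is correct and takes essentially the same route as the paper: part (i) is the monotonicity-of-$\cA$ computation that underlies the paper's Lemma \ref{l_T}-(i) (the paper merely cites Remark \ref{r_nonexpansive}-(ii) instead of inlining it), part (ii) is the identical combination of Lemma \ref{l_cocoercive} with the $\cQ$-based analogue of \cite[Proposition 4.39]{plc_book}, and part (iii) uses the same Ogura--Yamada composition formula with $\alpha_1=\tfrac12$, $\alpha_2=\tfrac{\beta}{2\nu}$. The only detail the paper adds that you omit is the separate treatment of $\beta=0$ in (iii), since $\alpha_2=0$ is not an admissible averagedness constant under Definition \ref{def_nonexpansive}-(v) and that case must instead be absorbed into (i) directly.
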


\begin{proof}
{\rm (i)} Remark \ref{r_nonexpansive}-(ii).

\vskip.1cm
(ii) First, $\cQ^\dagger \cB$ is $\cQ$-based  $ \frac{\nu} {\beta}$-cocoercive  by Lemma \ref{l_cocoercive}. Then,   $\cI - \cQ^\dagger \cB$ is $\cQ$-based $ \frac {\beta} {2\nu} $-averaged  by \cite[Proposition 4.39]{plc_book}.

\vskip.1cm
(iii) If $\beta>0$, since $\cT = (\cA+\cQ)^{-1} \cQ \circ (\cI - \cQ^\dagger  \cB) $, with $\alpha_1 = \frac{1}{2}$ and $\alpha_2 = \frac {\beta} {2\nu}$, the averagedness of $\cT$, by \cite[Theorem 3]{yamada} or \cite[Proposition 2.4]{plc_yamada}, is given as $\alpha = \frac{\alpha_1+\alpha_2 -2\alpha_1\alpha_2}
{1-\alpha_1\alpha_2} = \frac{2\nu}  {4 \nu  -\beta }$.

If $\beta=0$, $\cT$ becomes $(\cA+\cQ)^{-1}\cQ$, which is $\cQ$-based $\frac{1}{2}$-averaged by the assertion (i).

Both cases are merged as (iii).
\hfill 
\end{proof}

\begin{remark} \label{r_ave}
Proposition \ref{p_com}--(iii) is an extended version of \cite[Proposition 4.14]{pfbs_siam}, from a scalar parameter $\gamma$ to (degenerate)  metric $\cQ$. As observed in \cite[Remark 1]{ljw_mapr}, Proposition \ref{p_com}--(iii)   is sharper than \cite[Proposition 4.32]{plc_book}, which gives the averagedness of $\cT$ as
\[
\alpha = \frac{2} {1+ \frac{1}{\max \{\frac{1}{2},
\frac{2\nu} {\beta} \} } } = 
\frac{2\beta} {\beta + 2\times \min\{\beta, 
\nu \} } = \max \bigg\{ \frac{2}{3},
\frac{2\beta} {\beta+  2\nu } \bigg\}.
\]
\end{remark}

\subsection{Short summary} \label{sec_summary}
This section presents the nonexpansive properties of the G-FBS operator \eqref{gfbs} under degenerate metric setting, which extends the discussions of classical FBS operator in \cite{ljw_mapr}. Most existing works related to metric-based FBS algorithms, e.g., \cite{plc_vu_2014,pesquet_2016,pesquet_2014}, are concerned with the convergence issue under the problem setting of minimization of $f+g$. 
Our exposition here deals with more general operators $\cA$ and $\cB$ (not limited to the subdifferential or gradient of some functions, see further Remark \ref{r_assume_2}-(iv)) under more general metric (in particular, degenerate case). We will see  in Sect. \ref{sec_eg} that these generalizations are essential for the broad applications. 

\section{The G-FBS algorithm under degenerate setting}
\label{sec_gfbs}
We now consider the G-FBS scheme \eqref{gfbs} or its equivalent implicit form \cite[Eq.(54)]{condat_tour}:
\be \label{gfbs_eq}
0 \in \cA x^{k+1} +\cB x^k +\cQ (x^{k+1}-x^k). 
\ee
The convergence has been  investigated in \cite{plc_vu_2014} for the non-degenerate metric. We here focus on the degenerate case.

\subsection{Convergence in terms of metric distance}
\label{sec_dist}
First, it is easy to recognize the following
\begin{fact} \label{f_2}
$\Fix \cT = \zer (\cA + \cB)$.
\end{fact}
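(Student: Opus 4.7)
The plan is to prove the equality $\Fix \cT = \zer(\cA + \cB)$ by unwinding the definition of $\cT = (\cA + \cQ)^{-1}(\cQ - \cB)$ and exploiting the fact that $\cQ$ is single-valued while $\cA$ is set-valued. Since this is a purely set-theoretic identity about the graph of $\cA$, the degeneracy of $\cQ$ plays no role here and Assumption \ref{assume_1}--(vii) guarantees that $\cT$ is defined on all of $\cH$, so I will not need Assumption \ref{assume_1}--(iv)--(vi) in a nontrivial way.

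First, I would fix $x \in \cH$ and translate $x \in \Fix \cT$ into the relational form
\[
x \in (\cA + \cQ)^{-1}(\cQ - \cB)x \iff (\cQ - \cB)x \in (\cA + \cQ)x,
\]
using only the definition of the inverse of a set-valued operator. Since $\cQ$ and $\cB$ are single-valued with full domain (Assumption \ref{assume_1}--(ii) and (v)), the right-hand side is literally $\cA x + \cQ x$, and the left-hand side is the singleton $\{\cQ x - \cB x\}$.

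Second, I would cancel the $\cQ x$ on both sides. Because $\cQ x$ is a single element (not a set), set arithmetic gives the equivalence
\[
\cQ x - \cB x \in \cA x + \cQ x \iff -\cB x \in \cA x \iff 0 \in \cA x + \cB x,
\]
which is exactly $x \in \zer(\cA + \cB)$. Both implications are straightforward and reversible, so I immediately obtain the two inclusions and hence equality.

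There is no substantive obstacle here; the only point worth flagging explicitly is that the cancellation of $\cQ x$ is legal because $\cQ$ is single-valued, so $\cA x + \cQ x$ is just the translate of the set $\cA x$ by the vector $\cQ x$. I would close by noting that the identity depends only on Assumptions \ref{assume_1}--(i), (ii), (v), (vii), and in particular is insensitive to whether $\cQ$ is degenerate, which is exactly why Fact \ref{f_2} can be stated without further qualification in the degenerate setting.
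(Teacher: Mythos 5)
Your proof is correct and follows exactly the same chain of equivalences as the paper: $x\in\Fix\cT \iff (\cQ-\cB)x\in(\cA+\cQ)x \iff 0\in(\cA+\cB)x$. The extra remarks on single-valuedness of $\cQ$ and which assumptions are actually needed are fine but not required; the argument matches the paper's one-line proof.
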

\begin{proof}
 Indeed,
$\bbstar \in \Fix \cT \Longleftrightarrow 
\bbstar = (\cA + \cQ)^{-1} (\cQ - \cB) \bbstar
\Longleftrightarrow (\cQ - \cB) \bbstar \in 
 (\cA + \cQ) \bbstar \Longleftrightarrow   0  \in (\cA +\cB) \bbstar \Longleftrightarrow \bbstar \in \zer (\cA +\cB)$. 
 \hfill 
\end{proof}

The following theorem is a main result of this paper, which shows the weak convergence of $\{x^k\}_{k\in\N}$ in $\ran\cQ$. The proof adopts some techniques in \cite[Theorem 2.1]{fxue_gopt}.
\begin{theorem}[Weak convergence in $\ran \cQ$] 
\label{t_dist}
Let $\bb^0\in \cH$, $\{\bb^k\}_{k \in \N}$ be a sequence generated by \eqref{gfbs}.  Under Assumption \ref{assume_1}, if $\nu > \beta/2$,  the following hold.
\begin{itemize}
\item[\rm (i)] {\rm [Finite length in $\ran\cQ$]} $\{\cQ x^k\}_{k\in\N}$ has a finite length in a sense that $\sum_{k=0}^\infty \|x^{k+1}-x^k\|_\cQ^2 < \infty$.

\item[\rm (ii)] {\rm [$\cQ$-based asymptotic regularity]} $\cQ (x^k-x^{k+1}) \rightarrow 0$, as $k\rightarrow \infty$.

\item[\rm (iii)] {\rm [Rate of $\cQ$-based regularity]}  $\|\bb^{k+1 } -\bb^{k} \|_\cQ$ has the pointwise rate of $\cO(1/\sqrt{k})$:
\[
\big\|\bb^{k +1} -\bb^{k} \big\|_\cQ
\le \frac{1}{\sqrt{k+1}}   \sqrt{ \frac{2\nu}{2\nu-\beta} }
\big\|\bb^{0} -\bb^\star \big\|_\cQ, \quad
\forall k \in \N.
\]

\item[\rm (iv)] {\rm [Weak convergence in $\ran \cQ$]} There exists $x^\star \in \zer (\cA+\cB)$, such that $ \cQ  x^k \weak  \cQ x^\star$, as $k\rightarrow \infty$. 
\end{itemize}
\end{theorem}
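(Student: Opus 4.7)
The strategy is to obtain parts (i)--(iii) from the nonexpansive machinery of Section \ref{sec_operator}, and then attack (iv) by (a) boundedness of $\{\cP_{\ran\cQ}x^k\}$ in $\cH$, (b) an identification step inside $\ran\cQ$ based on maximality of $\cA+\cB$ together with the structural fact that $\cB$ is invariant under $\ker\cQ$-translations, and (c) an Opial-type argument in the $\cQ$-seminorm.

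\smallskip
\noindent\emph{Parts (i)--(iii).} By Fact \ref{f_2}, $\Fix\cT=\zer(\cA+\cB)\neq\emptyset$ (Assumption \ref{assume_1}--(viii)); pick $x^\star$ in this set and apply Lemma \ref{l_T}--(iii) with $x_1:=x^k$, $x_2:=x^\star$ (so $\cT x^\star=x^\star$) to get the Fej\'er-type descent
\[
\|x^{k+1}-x^\star\|_\cQ^2+\Bigl(1-\tfrac{\beta}{2\nu}\Bigr)\,\|x^{k+1}-x^k\|_\cQ^2 \le \|x^k-x^\star\|_\cQ^2.
\]
Under $\nu>\beta/2$ the middle coefficient is positive, so telescoping yields $\sum_k\|x^{k+1}-x^k\|_\cQ^2\le \tfrac{2\nu}{2\nu-\beta}\|x^0-x^\star\|_\cQ^2$, which is (i). Since $\|x^{k+1}-x^k\|_\cQ=\|\cQ^{1/2}(x^{k+1}-x^k)\|\to 0$ and $\cQ^{1/2}$ is bounded, $\cQ(x^{k+1}-x^k)=\cQ^{1/2}\cdot\cQ^{1/2}(x^{k+1}-x^k)\to 0$, giving (ii). For (iii), the $\cQ$-nonexpansiveness of $\cT$ (Theorem \ref{t_T}--(ii)) implies that $\{\|x^{k+1}-x^k\|_\cQ\}$ is monotone non-increasing, so $(k+1)\|x^{k+1}-x^k\|_\cQ^2\le\sum_{j=0}^k\|x^{j+1}-x^j\|_\cQ^2$, which combined with (i) yields the advertised $\cO(1/\sqrt{k})$ rate.

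\smallskip
\noindent\emph{Part (iv): setup, residual, and identification.} Fej\'er monotonicity gives $\{\|\cQ^{1/2}x^k\|\}$ bounded, hence $\{\cQ x^k\}$ bounded in $\cH$. Remark \ref{r_assume_1}--(i) yields $\sqrt\nu\,\|y\|\le\|\cQ^{1/2}y\|$ for $y\in\ran\cQ$, so the projected iterates $\tilde x^k:=\cP_{\ran\cQ}x^k$ are bounded in $\cH$ and, combined with (ii), $\|\tilde x^{k+1}-\tilde x^k\|\to 0$; extract a weak cluster point $\tilde x^{k_j}\weak\tilde z$, with $\tilde z\in\ran\cQ$ since $\ran\cQ$ is closed (hence weakly closed). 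Rewriting \eqref{gfbs_eq} gives the residual
\[
u^{k+1}:=\cB x^{k+1}-\cB x^k-\cQ(x^{k+1}-x^k)\in(\cA+\cB)x^{k+1},
\]
where $\|u^{k+1}\|\to 0$ (by Fact \ref{f_1}, cocoercivity of $\cB$, and (ii)) and $u^{k+1}\in\ran\cQ$ (Assumption \ref{assume_1}--(vi)). The structural observation that unlocks the argument is that $\cB$ is constant on every coset of $\ker\cQ$: writing cocoercivity with $y:=x+e$, $e\in\ker\cQ$, the inner product on the left vanishes because $\cB(x+e)-\cB x\in\ran\cQ\perp\ker\cQ$, which forces $\cB(x+e)=\cB x$. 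Consequently, testing the maximal monotonicity of $\cA+\cB$ (maximal by Remark \ref{r_assume_1}--(ii)) against pairs $(y,w)\in\gra(\cA+\cB)$ with $w\in\ran\cQ$ collapses $\langle u^{k_j+1}-w,x^{k_j+1}-y\rangle\ge 0$ to an inequality involving only the bounded $\ran\cQ$-components of $x^{k_j+1}$ and $y$ (since $\langle u,x\rangle=\langle u,\tilde x\rangle$ and $\langle w,x\rangle=\langle w,\tilde x\rangle$ whenever $u,w\in\ran\cQ$); passage $j\to\infty$ with $u^{k_j+1}\to 0$ strongly and $\tilde x^{k_j+1}\weak\tilde z$ produces, via maximality, an element $\bar x\in\cH$ with $\cQ\bar x=\cQ\tilde z$ and $0\in(\cA+\cB)\bar x$, i.e.\ $\bar x\in\zer(\cA+\cB)$.

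\smallskip
\noindent\emph{Opial-type uniqueness.} Suppose $\cQ x^{k_j}\weak\cQ\bar x_1$ and $\cQ x^{k_l}\weak\cQ\bar x_2$ for two cluster points with $\bar x_1,\bar x_2\in\zer(\cA+\cB)$. Fej\'er monotonicity makes $\lim_k\|x^k-\bar x_i\|_\cQ^2$ exist for $i=1,2$; expanding $\|x^k-\bar x_i\|_\cQ^2=\|x^k\|_\cQ^2-2\langle\cQ x^k,\bar x_i\rangle+\|\bar x_i\|_\cQ^2$ and matching the two limits along the two subsequences collapses to $\|\bar x_1-\bar x_2\|_\cQ^2=0$, i.e.\ $\cQ\bar x_1=\cQ\bar x_2$. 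Hence $\{\cQ x^k\}$ has a unique weak cluster point $\cQ x^\star$ with $x^\star\in\zer(\cA+\cB)$, completing (iv). The hardest step, by far, is the identification: unlike the non-degenerate case of \cite{plc_vu_2014}, the full iterate $\{x^k\}$ need not be bounded in $\cH$, so the classical demiclosed principle does not apply directly. The trick is to exploit Assumption \ref{assume_1}--(vi) twice---once to render $\cB$ insensitive to $\ker\cQ$-translations, and once to keep the residual $u^{k+1}$ in $\ran\cQ$---so that the monotonicity inequality only involves the bounded $\ran\cQ$-components of the iterates, enabling the limit passage.
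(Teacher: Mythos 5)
Your parts (i)--(iii) and the Opial-type uniqueness step of (iv) are correct and essentially coincide with the paper's proof (the paper derives the monotonicity of $\|x^{k+1}-x^k\|_\cQ$ by reusing Lemma \ref{l_T}-(iii) with $x_1=x^k$, $x_2=x^{k+1}$ rather than citing Theorem \ref{t_T}-(ii), but that is the same computation). Your observation that $\cB$ is constant on each coset of $\ker\cQ$ is true and neatly justified via cocoercivity and $\ran\cB\subseteq\ran\cQ=(\ker\cQ)^\perp$.

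The genuine gap is in the identification step of (iv), at the words ``via maximality.'' After restricting to test pairs $(y,w)\in\gra(\cA+\cB)$ with $w\in\ran\cQ$ and passing to the limit, you obtain $\langle 0-w \,|\, \tilde z-y\rangle\ge 0$ only for this \emph{subfamily} of the graph. Maximal monotonicity lets you adjoin a new point to the graph only when the monotonicity inequality is verified against \emph{every} pair of the graph. Nothing in Assumption \ref{assume_1} forces $\ran(\cA+\cB)\subseteq\ran\cQ$ --- only $\ran\cB\subseteq\ran\cQ$ is assumed, and $\cA$ may output elements with nonzero $\ker\cQ$-component (this actually happens in the paper's own examples, e.g.\ the first block row of $\cA$ in Example \ref{eg_alm}, whose $\cQ$ vanishes on that block) --- so the pairs you test against form, in general, a proper sub-relation, and no standard maximality principle yields from them the existence of $\bar x$ with $\cQ\bar x=\cQ\tilde z$ and $0\in(\cA+\cB)\bar x$. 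For the discarded pairs the offending term $\langle w \,|\, e^{k_j+1}\rangle$, involving the possibly unbounded $\ker\cQ$-component of the iterates, is exactly what you cannot control; excluding them avoids that term but destroys the hypothesis of maximality. The $\ker\cQ$-invariance of $\cB$ does not repair this, because $\cA$ enjoys no such invariance. (For comparison, the paper's own Step-2 sidesteps the issue by invoking weak--strong closedness of $\gra(\cA+\cB)$ at a weak cluster point of the full sequence $\{x^k\}$, which tacitly presumes such a cluster point exists; either route needs some additional control of the $\ker\cQ$-components of the iterates to be airtight, and your proposal does not supply it.)
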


\vskip.2cm
\begin{proof}
(i)-(ii) Taking $\bb_1 = \bb^k$ and $\bb_2 = x^\star \in \Fix  \cT $ in Lemma \ref{l_T}-(iii),  we obtain
\be \label{x12}
\big\|  \bb^{k+1} -\bbstar \big\|_\cQ^2
  \le   \big\| \bb^k - \bbstar \big\|_\cQ^2
-  \Big( 1-\frac{\beta}{2 \nu }  \Big) 
\big\|  \bb^k -  \bb^{k+1} \big\|_\cQ^2.
\ee
Summing up \eqref{x12} from $k=0$ to $K$ yields
\be \label{x34}
\sum_{k=0}^{K} \big\| \bb^k - \bb^{k+1} \big\|
_\cQ^2 \le  \frac{2\nu}{2\nu-\beta} 
\big\| \bb^0 - \bbstar \big\|_\cQ^2.
\ee 
Taking $K \rightarrow \infty$, we have: $\sum_{k=0}^{\infty} \big\| \bb^k - \bb^{k+1} \big\|_\cQ^2 \le \frac{2\nu}{2\nu-\beta}   \big\| \bb^{0} - \bb^\star \big\|_\cQ^2 < +\infty$, which   implies that  $\lim_{k\rightarrow \infty} \|\bb^k-\bb^{k+1}\|_\cQ =  0$.

\vskip.1cm
(iii)  Taking $\bb_1 = \bb^k$ and  $\bb_2 = \bb^{k+1}$ in Lemma \ref{l_T}-(iii),  we have
\be  \label{x33}
\big\|  \bb^{k+1} -\bb^{k+2} \big\|_\cQ
  \le  \big\| \bb^k - \bb^{k+1}  \big\|_\cQ, 
\ee
which implies that  $\| \bb^k - \bb^{k+1}  \|_\cQ$ is non-increasing. Then, (iii) follows from \eqref{x34}.

\vskip.1cm
(iv) Following the reasoning of  the well-known Opial's lemma \cite{opial}\footnote{Refer to \cite[Lemma 2.47]{plc_book} or \cite[Lemma 2.1]{attouch_2001} for the Opial's argument.}, the weak convergence proof is divided into 3 steps\footnote{This line of reasoning is very similar to Fej\'{e}r monotonicity, see \cite[Proposition 5.4,  Theorem 5.5]{plc_book} for example.}:

{\red
Step-1: show that $\lim_{k\rightarrow \infty} \| x^{k} - x^\star \|_\cQ  $ exists for any given $x^\star \in \zer (\cA+\cB) $; }

Step-2: show that $\{\sqrt{ \cQ} x^k\}_{k\in\N}$ has at least  one weak sequential cluster point lying in $\sqrt{\cQ}  \zer (\cA+\cB) $;

Step-3: show that the  cluster point of  $\{\sqrt{ \cQ} x^k\}_{k\in\N}$ is unique.

\vskip.2cm
Step-1: \eqref{x12}  shows that    the sequence
 $\{ \| x^{k} - x^\star \|_\cQ \}_{k\in\N} $ is non-increasing, and bounded from below (always being non-negative), and thus, convergent, i.e. $\lim_{k\rightarrow \infty} \| x^{k} - x^\star \|_\cQ$ exists.

\vskip.2cm
Step-2: \eqref{x12} also implies that the sequence $\{\sqrt{ \cQ} x^k\}_{k\in\N}$ is bounded, since $\big\|\sqrt{\cQ} x^k- \sqrt{\cQ}\cstar \big\| = \|x^k-\cstar\|_\cQ \le \|x^0 - \cstar\|_\cQ
= \big\|\sqrt{\cQ} x^0- \sqrt{\cQ}\cstar \big\| $, $\forall k \in \N$. Then, by \cite[Lemma 2.37]{plc_book}, $\{\sqrt{ \cQ} x^k\}_{k\in\N}$ has at least  one weak sequential cluster point, i.e. $\{\sqrt{\cQ} x^k\}_{k\in\N}$ has a subsequence $\{\sqrt{ \cQ}  x^{k_i}\}_{i\in\N}$ that weakly converges to a point $v^*$, denoted by $\sqrt{\cQ} x^{k_i} \rightharpoonup v^*$, as $k_i \rightarrow \infty$. Our aim in Step-2  is to show that $v^* = \sqrt{\cQ} x^*$ for some $x^* \in \zer (\cA+\cB)$, and more generally,  every weak sequential cluster point of  $\{ \sqrt{ \cQ} x^k\}_{k\in\N}$ belongs to the set $ \sqrt{\cQ}   \zer  (\cA+\cB) := \{v\in \cH: v=\sqrt{\cQ} x\text{\ and\ } x \in \zer  (\cA+\cB) \}$. To this end, we first note that (i) asserts that   $\cQ (x^k- x^{k+1}) \rightarrow  0$   strongly in $\cH$ as $k \rightarrow \infty$, and further,  $\dist (\cA x^{k+1}+\cB x^k, 0) \rightarrow 0$, as $k\rightarrow \infty$ by the scheme \eqref{gfbs}.  Then, since $\cB$ is $\beta$-Lipschitz continuous (by Remark \ref{r_assume_1}-(ii)), and due to the  sequential closedness of the graph of $\cA+\cB$ in $\cH_\text{weak} \times \cH_\text{strong}$ (by the maximality of $\cA+\cB$ in Remark \ref{r_assume_1}-(ii) and \cite[Proposition 20.38]{plc_book}),  the weak sequential cluster point of $\{x^{k} \}_{k\in \N}$ lies in $\zer (\cA+\cB)$. Owing to the closedness of $\ran \cQ$ (i.e.,  Assumption \ref{assume_1}-(iv)),  $\{\sqrt{ \cQ} x^k\}_{k\in\N}$ has at least  one weak sequential cluster point lying in $\sqrt{\cQ}  \zer  (\cA+\cB)  $.

\vskip.2cm
Step-3: We need to show that   $\{\sqrt{\cQ} x^k\}_{k\in\N}$ cannot have two distinct weak sequential cluster point in $\sqrt{ \cQ}  \zer  (\cA+\cB)$. To this end, let $\sqrt{\cQ} x_1^*, \sqrt{\cQ} x_2^{*} \in \sqrt{\cQ}  \zer  (\cA+\cB)$ be two cluster points of  $\{ \sqrt{ \cQ} x^k\}_{k\in\N}$. Since  $\lim_{k\rightarrow \infty} \| x^{k} - \cstar \|_\cQ $ exists as proved in Step-1, set $l_1 = \lim_{k\rightarrow \infty} \| x^k - x_1^*\|_\cQ = \|\sqrt{\cQ} x^k - \sqrt{\cQ} x_1^*\| $, and $l_2 = \lim_{k\rightarrow \infty} \| x^k - x_2^*\|_\cQ = \|\sqrt{\cQ} x^k - \sqrt{\cQ} x_2^*\| $. Take a subsequence $\{\sqrt{\cQ} x^{k_i} \}$ weakly converging to $ \sqrt{\cQ} x_1^*$, as $k_i \rightarrow \infty$. From the identity of
\[
\big\| x^k-x_1^*\big\|_\cQ^2 - \big\|x^k-x_2^* \big\|_\cQ^2
=\big\|x_1^*- x_2^*\big\|_\cQ^2 +2 \big\langle 
x_1^*- x_2^* \big|  x_2^* - x^k \big\rangle_\cQ,
\]
we deduce that  $l_1 -l_2 =- \big\| x_1^*- x_2^*\big\|_\cQ^2$ by taking $k\rightarrow \infty$ on both sides. Similarly,  take a subsequence $\{ \sqrt{\cQ} x^{k_j} \}$ weakly converging to $\sqrt{\cQ} x_2^*$, as $k_j \rightarrow \infty$, which yields that $l_1 -l_2 = \big\|x_1^* - x_2^*\big\|_\cQ^2$. Consequently, $\big\|x_1^*-x_2^*\big\|_\cQ = 
\big\|\sqrt{\cQ} x_1^* - \sqrt{\cQ} x_2^*\big\| =0$, which establishes the uniqueness of the weak sequential cluster point, denoted by $\sqrt{\cQ} x^\star$. 

Finally, with a trivial replacement of $\sqrt{\cQ}$ by $\cQ$ (see, for instance, \cite[Fact 2.25]{plc_book}), we summarize that  $\{\cQ x^k\}_{k\in\N}$, is bounded and possesses a unique weak sequential cluster point $\cQ x^\star \in \cQ   \zer  (\cA+\cB)$. By \cite[Lemma 2.38]{plc_book}, $\cQ x^k \rightharpoonup \cQ x^\star \in \cQ \zer (\cA+\cB) $, as $k\rightarrow \infty$.
\hfill 
\end{proof}

\vskip.1cm
If  $\cQ$ is non-degenerate, one can safely conclude $x^{k+1}-x^k\rightarrow 0$ and $x^k \weak x^\star$, as $k\rightarrow \infty$. This coincides with  \cite[Theorem 4.1]{plc_vu_2014} and \cite[Theorem 4.8]{repetti}. The convergence results of the basic FBS algorithm with $\cQ = \frac{1}{\tau} \cI$ presented in  \cite{rtr_1976,ppa_guler,corman,taomin_2018} are also exactly recovered.  However, if $\cQ$ is degenerate, one cannot establish the weak convergence of $x^k \rightharpoonup x^\star \in \zer (\cA+\cB)$. It is indeed true that   the weak sequential cluster point of $\{x^{k} \}_{k\in \N}$ lies in $\zer (\cA+\cB)$ as proved in Step-2. However, the cluster point may not be unique. As observed in Step-3, one can only obtain $\big\|x_1^*-x_2^*\big\|_\cQ =0$, which yields $x_1^*- x_2^* \in   \ker \cQ$ rather than $x_1^*=x_2^*$. This is essentially due to the existence of $\lim_{k\rightarrow \infty}\big\|x^k- x^\star\big\|_\cQ$ only (as shown in Step-1), while the existence of $\lim_{k\rightarrow \infty} \big\|x^k- x^\star\big\|$ is not guaranteed. 

In addition, in view of \cite[Lemma 2.7]{corman}, the rate of asymptotic regularity in Theorem \ref{t_dist}-(iii) can be refined to $o(1/\sqrt{k})$.

\vskip.1cm
If $\cB=0$, \eqref{gfbs} becomes the (degenerate) metric PPA:
\be \label{ppa}
x^{k+1} := (\cA+\cQ)^{-1} \cQ x^k,
\ee
whose convergence result  is stated below.
\begin{corollary}[Weak convergence in $\ran \cQ$] 
\label{c_ppa}
Let $\bb^0\in \cH$, $\{\bb^k\}_{k \in \N}$ be a sequence generated by \eqref{ppa}.  Under Assumption \ref{assume_1}  with $\cB=0$,  the following hold.
\begin{itemize}
\item[\rm (i)] {\rm [Rate of $\cQ$-based regularity]}  $\|\bb^{k+1 } -\bb^{k} \|_\cQ$ has the pointwise rate of $\cO(1/\sqrt{k})$:
\[
\big\|\bb^{k +1} -\bb^{k} \big\|_\cQ
\le \frac{1}{\sqrt{k+1}}  
\big\|\bb^{0} -\bb^\star \big\|_\cQ, \quad
\forall k \in \N.
\]

\item[\rm (ii)] {\rm [Weak convergence in $\ran \cQ$]} There exists $x^\star \in \zer \cA$, such that $ \cQ  x^k \weak  \cQ x^\star$, as $k\rightarrow \infty$. 
\end{itemize}
\end{corollary}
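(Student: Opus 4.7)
The plan is to obtain Corollary \ref{c_ppa} as a direct specialization of Theorem \ref{t_dist} to the setting $\cB = 0$. First I would verify that Assumption \ref{assume_1} with $\cB = 0$ fits the hypotheses of Theorem \ref{t_dist}: as noted in Remark \ref{r_assume_1}-(iv), the zero operator is $\beta$-cocoercive with $\beta = 0$, it satisfies $\ran \cB = \{0\} \subseteq \ran \cQ$, and consequently the nondegeneracy condition $\nu > \beta/2$ reduces to $\nu > 0$, which is automatic from Assumption \ref{assume_1}-(iv) together with Remark \ref{r_assume_1}-(i).

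For part (i), I would invoke Theorem \ref{t_dist}-(iii) directly, observing that with $\beta = 0$ the prefactor simplifies as
\[
\sqrt{\frac{2\nu}{2\nu - \beta}} = \sqrt{\frac{2\nu}{2\nu}} = 1,
\]
yielding exactly the stated rate $\|x^{k+1}-x^k\|_\cQ \le \|x^0 - x^\star\|_\cQ / \sqrt{k+1}$. For part (ii), I would apply Theorem \ref{t_dist}-(iv), using that $\zer(\cA + \cB) = \zer \cA$ when $\cB = 0$, which immediately provides $x^\star \in \zer \cA$ with $\cQ x^k \weak \cQ x^\star$.

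Since every statement reduces to a substitution into Theorem \ref{t_dist}, there is no real obstacle to overcome; the only thing worth double-checking is that the three-step Opial-style argument in the proof of Theorem \ref{t_dist}-(iv) (in particular, Step 2's appeal to the sequential closedness of the graph of $\cA + \cB$ in $\cH_{\text{weak}} \times \cH_{\text{strong}}$) still applies when $\cB = 0$. This is immediate, because $\cA$ alone is maximally monotone by Assumption \ref{assume_1}-(i), so the required graph-closure property of $\cA + \cB = \cA$ holds via \cite[Proposition 20.38]{plc_book} without needing the Lipschitz continuity of $\cB$. Thus the whole argument carries through verbatim, and the corollary follows.
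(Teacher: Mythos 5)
Your proposal is correct and takes exactly the paper's route: the paper's own proof is the one-line "Substituting $\beta=0$ into Theorem \ref{t_dist}," and your substitution of $\beta=0$ into parts (iii) and (iv), together with the checks that $\cB=0$ satisfies Assumption \ref{assume_1}-(v),(vi) and that $\zer(\cA+\cB)=\zer\cA$, is just a more carefully spelled-out version of the same argument.
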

\begin{proof}
Substituting $\beta=0$ into Theorem \ref{t_dist}. 
\hfill 
\end{proof}

\subsection{Convergence of objective value}
\subsubsection{Assumptions and basic descent results}
If $\cA = \partial g$, $\cB =\nabla f$, where the functions $f$ and $g$ satisfy the following
\begin{assumption} \label{assume_2}
\begin{itemize}
\item[\rm (i)] $f: \cH \mapsto \R$ and $g: \cH \mapsto \R \cup \{ +\infty\}$ are proper and lower semi-continuous (l.s.c.);

\item[\rm (ii)] $f$ is Fr\'{e}chet differentiable with $\beta$-Lipschitz continuous gradient $\nabla f$;

\item[\rm (iii)] $\dom f\cap \dom g \ne \emptyset$, $\Arg\min (f+g) \ne \emptyset$;

\item[\rm (iv)] $\cQ$ is linear, bounded, closed, self-adjoint and degenerate, such that $\ker\cQ \backslash \{0\} \ne \emptyset$;

\item[\rm (v)] $\ran \nabla f \subseteq \ran \cQ$;

\item[\rm (vi)] $\ran (\partial g+\cQ) \supseteq  \ran (\cQ-\nabla f)$;

\item[\rm (vii)] $g$ is convex;
\item[\rm (viii)] $f$ is convex.
\end{itemize}
\end{assumption}
\begin{remark} \label{r_assume_2}
{\rm (i)} Assumption \ref{assume_2} is parallel to Assumption \ref{assume_1}, by the correspondence of $\cA=\partial g$ and $\cB = \nabla f$. Under Assumption \ref{assume_2}-(iii), the problem \eqref{inclusion} is equivalent to minimizing $f+g$, if $\dom f\cap \dom g \ne \emptyset$, by \cite[Proposition 16.42]{plc_book}.

{\rm (ii)} To understand Assumption \ref{assume_2}-(v), consider a function
$f:\R^2\mapsto \R: x=(a,b)\mapsto \frac{1}{2}a^2$. Then, $\nabla f(x) = \begin{bmatrix}
\partial_a f(x) \\ \partial_b f(x)  \end{bmatrix} = 
\begin{bmatrix}
a \\ 0 \end{bmatrix}$, which lies in the range of a degenerate metric $ \cQ = \begin{bmatrix}
1 & 0 \\ 0 & 0 \end{bmatrix}$, i.e. $\R \times \{0\}$---a proper subspace of $\cH=\R^2$.  Note that here $\cB=\nabla f = \begin{bmatrix}
1 & 0 \\ 0 & 0 \end{bmatrix}$, i.e., the previous example in Remark \ref{r_assume_1}-(iii).
 
{\rm (iii)} The convexity of $f$ and $g$ is assumed in separate items, since some of the following results do not require the convexity.

{\rm (iv)} It should be stressed that Assumption \ref{assume_1} is more general than Assumption \ref{assume_2}. In many examples in Sect. \ref{sec_eg}, $\cA$ is not {\it cyclically} maximally monotone. Consequently, by \cite[Theorem 22.18]{plc_book}, there does not exist a proper, l.s.c. and convex function $f$, such that $\cA = \partial f$. In this sense, the original problem \eqref{inclusion} is essentially beyond the scope of minimizing a certain cost function. This part actually deals with a special case of Sect. \ref{sec_dist}, if there exists an objective function $f+g$ to minimize. 

{\rm (v)} Assumption \ref{assume_2} also covers a special case of $g=0$ or $f=0$, where the G-FBS iteration \eqref{gfbs}, under the case of $\cQ=\frac{1}{\tau}\cI$, becomes classical gradient descent (see Example \ref{eg_grad}) or classical PPA (see Example \ref{eg_ppa} in Sect. \ref{sec_eg}). 
\end{remark}

Let us first show an important descent lemma restricted to the range space of $\cQ$.
\begin{lemma} [Degenerate descent lemma] 
\label{l_descent}
Under Assumption \ref{assume_2}-(ii), (iii) and (iv), it holds that
\[
f(x_2) \le   f(x_1) +  
 \big\langle  \cQ  (x_2-x_1) |  \cQ^\dagger 
\nabla f(x_1) \big\rangle 
 + \frac{\beta} {2\nu}  \big\|  x_2-x_1 \big\|_\cQ^2  ,\ 
 \forall (x_1,x_2) \in \cH \times \cH.
\]
where $\nu$ is specified in Remark \ref{r_assume_1}-(i).
\end{lemma}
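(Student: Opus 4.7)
The plan is to reproduce the classical proof of the descent lemma via the fundamental theorem of calculus, but convert every inner product into one that is either a $\cQ$-pairing or involves the composition $\cQ^\dagger\nabla f$, so that the $\cQ$-based $(\beta/\nu)$-Lipschitz continuity of $\cQ^\dagger\nabla f$ already established in Lemma \ref{l_cocoercive}-(ii) applies verbatim. Writing $\Delta := x_2-x_1$, I would start from
\[
f(x_2)-f(x_1) \;=\; \int_0^1 \langle \nabla f(x_1+t\Delta)\,|\,\Delta\rangle\,dt \;=\; \langle \nabla f(x_1)\,|\,\Delta\rangle + \int_0^1 \langle \nabla f(x_1+t\Delta)-\nabla f(x_1)\,|\,\Delta\rangle\,dt,
\]
whose validity rests only on $\nabla f$ being Lipschitz, i.e.\ Assumption \ref{assume_2}-(ii).

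Next I would handle the first-order term. Since $\ran\nabla f\subseteq\ran\cQ$, the projection identity $\cP_{\ran\cQ} = \cQ\cQ^\dagger$ already invoked in Lemma \ref{l_t} gives $\nabla f(x_1) = \cQ\cQ^\dagger\nabla f(x_1)$; moving $\cQ$ onto the other slot by self-adjointness of $\cQ$ produces exactly $\langle \cQ(x_2-x_1)\,|\,\cQ^\dagger\nabla f(x_1)\rangle$.

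For the integrand of the remainder I would apply Fact \ref{f_1}-(ii) (the $\sqrt\cQ$-factorisation and Cauchy--Schwarz) to obtain
\[
\langle \nabla f(x_1+t\Delta)-\nabla f(x_1)\,|\,\Delta\rangle \;\le\; \big\|\cQ^\dagger[\nabla f(x_1+t\Delta)-\nabla f(x_1)]\big\|_\cQ \cdot \|\Delta\|_\cQ.
\]
Lemma \ref{l_cocoercive}-(ii) controls the first factor by $(\beta t/\nu)\|\Delta\|_\cQ$, and integrating $t$ over $[0,1]$ supplies the constant $\beta/(2\nu)$, producing the claimed bound.

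The main obstacle I anticipate is conceptual rather than computational: the classical Lipschitz-gradient argument naturally yields a $\tfrac{\beta}{2}\|\Delta\|^2$ term in the \emph{ambient} norm, and any $\ker\cQ$-component of $\Delta$ is genuinely invisible to $\|\cdot\|_\cQ$, so one might worry that such an invisible component slips through and spoils the bound. The rescue is that every $\nabla f$-difference lives in $\ran\cQ$, so inner products of those differences against $\Delta$ only see the $\ran\cQ$-component of $\Delta$; this is precisely the structural content that makes Lemma \ref{l_cocoercive}-(ii) available, and it is what makes the whole argument compatible with a degenerate $\cQ$.
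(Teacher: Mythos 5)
Your proposal is correct and follows essentially the same route as the paper's proof: the fundamental theorem of calculus decomposition, the bound on the integrand via Fact \ref{f_1}-(ii) followed by the $\cQ$-based $(\beta/\nu)$-Lipschitz continuity of $\cQ^\dagger\nabla f$ from Lemma \ref{l_cocoercive}-(ii), and the rewriting of the first-order term as $\langle\cQ(x_2-x_1)\,|\,\cQ^\dagger\nabla f(x_1)\rangle$ using $\cP_{\ran\cQ}=\cQ\cQ^\dagger$ and self-adjointness. The only cosmetic difference is that the paper bounds the absolute value of the remainder while you bound it from above, which suffices.
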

\begin{proof}
We adopt similar technique with the proof of \cite[Lemma 2.64]{plc_book}:
\begin{eqnarray}
&& \big| f(x_2) -f(x_1)- 
\big\langle  x_2-x_1 |\nabla f(x_1) \big\rangle \big|
\nonumber \\
& =& \int_0^1 \frac{1}{t} \big\langle t( x_2-x_1) \big|
\nabla f(x_1 +t( x_2-x_1)) - \nabla f(x_1) \big\rangle 
\rd t
\nonumber \\
& \le & \int_0^1 \frac{1}{t} \big\|  t ( x_2-x_1) \big\|_\cQ \cdot \big\| \cQ^\dagger \nabla f(x_1 +t( x_2-x_1)) - \cQ^\dagger \nabla f(x_1)   \big\|_\cQ \rd t
\ \text{[by Fact \ref{f_1}-(ii)]}
\nonumber \\
& \le & \int_0^1 \frac{1}{t} \big\|  t ( x_2-x_1) \big\|_\cQ \cdot \frac{\beta} {\nu}  \big\|  t ( x_2-x_1) \big\|_\cQ  \rd t
\ \text{[by Lemma \ref{l_cocoercive}]}
\nonumber \\
& = &  \frac{\beta} {2 \nu} 
 \big\|  x_2-x_1 \big\|_\cQ^2.
 \nonumber 
\end{eqnarray}
The desired inequality follows by noting that 
$\big\langle  x_2-x_1 |\nabla f(x_1) \big\rangle
= \big\langle  \cQ  (x_2-x_1) |  \cQ^\dagger 
\nabla f(x_1) \big\rangle $, due to Fact \ref{f_1}-(ii). 
\hfill
\end{proof}

\vskip.1cm
The well-known {\it Descent Lemma} \cite[Lemma 2.64, Theorem 18.15-(iii)]{plc_book} gives
\be \label{descent_simple}
f(x_2) \le   f(x_1) +  
 \big\langle x_2-x_1 | \nabla f(x_1) \big\rangle 
 + \frac{\beta} {2}  \big\|  x_2-x_1 \big\|^2  ,\ 
 \forall (x_1,x_2) \in \cH \times \cH.
\ee 
This is instrumental for proving the convergence of $f+g$, e.g., \cite[Lemma 3.1]{pesquet_2014}.  Lemma \ref{l_descent} extends this standard result  to the case of degenerate metric. 

By the proof of Lemma \ref{l_descent}, one can see that, if $\ran \nabla f \subseteq \ran \cQ$ and $x_2-x_1\in\ker\cQ$, 
then, 
\[
\big| f(x_2) -f(x_1)- 
\big\langle  x_2-x_1 |\nabla f(x_1) \big\rangle \big|
= \big| f(x_2) -f(x_1)  \big|\le  \frac{\beta} {2 \nu} 
 \big\|  x_2-x_1 \big\|_\cQ^2  =0, 
\]
i.e., $f(x_1)=f(x_2)$. Here, the first equality comes from $\big\langle  x_2-x_1 |\nabla f(x_1) \big\rangle = 0$, due to $x_2-x_1 \in \ker\cQ = (\ran \cQ)^\perp$ and $\nabla f(x_1) \in \ran \cQ$.  This implies that the directional derivative of the function $f$ at any point $x$ along the direction of $s\in\ker \cQ$ is 0. In other words, by \cite[Definition 17.1]{plc_book},  the directional derivative is given as
\[
f'(x; s) = \inf_{\xi \in \ ]0,+\infty[} \frac{f(x+\xi s) -f(x)}{\xi}
=0,\quad \forall x\in\cH,\ s\in \ker\cQ.
\]
To understand this,  it is helpful to recall the previous example in Remark \ref{r_assume_2}-(ii): $f(x)=f(a,b)=\frac{1}{2}a^2$. It is clear that $f(x_1)=f(x_2)$ for $x_1=(a,b_1)$ and 
$x_2=(a,b_2)$.

The following lemma   extends the {\it sufficient decrease property} \cite[Lemma 2]{bolte_2014} to the case of arbitrary (degenerate) metric $\cQ$.
\begin{lemma} [Degenerate sufficient decrease property] \label{l_decrease}
Let $\bb^0\in \cH$, $\{\bb^k\}_{k \in \N}$ be a sequence generated by \eqref{gfbs}. {\red Define $h:=f+g$.} Then,   the following hold.
\begin{itemize}
\item[\rm (i)] Under Assumption \ref{assume_2}-(i)--(v), we have
\[
h (\bb^k) - h (\bb^{k+1}) \ge
 \frac{1}{2} \Big(1- \frac{\beta} { \nu} \Big)
  \big\| \bb^{k+1} - \bb^k\big\|_\cQ^2;
 \]
 
\item[\rm (ii)] Under Assumption  \ref{assume_2}-(i)--(vi), we have
\[
h  (\bb^k) - h  (\bb^{k+1}) \ge
  \Big(1- \frac{\beta}{2\nu} \Big)
    \big\| \bb^{k+1} - \bb^k\big\|_\cQ^2,
 \]
\end{itemize}
where $\nu$ is specified in Remark \ref{r_assume_1}-(i).
\end{lemma}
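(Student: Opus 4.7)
The plan is to match the degenerate descent bound on $f(x^{k+1})-f(x^k)$ coming from Lemma~\ref{l_descent} with a comparable bound on $g(x^{k+1})-g(x^k)$ extracted from the iteration~\eqref{gfbs}; the two assertions then differ only in how the $g$-side bound is obtained. As a preparatory step, I would apply Lemma~\ref{l_descent} with $(x_1,x_2)=(x^k,x^{k+1})$. Because $\ran\nabla f\subseteq \ran\cQ$ (Assumption~\ref{assume_2}-(v)), the inner product $\langle \cQ(x^{k+1}-x^k)\,|\,\cQ^\dagger\nabla f(x^k)\rangle$ collapses to $\langle \nabla f(x^k)\,|\,x^{k+1}-x^k\rangle$ via $\cQ\cQ^\dagger=\cP_{\ran\cQ}$ as in the proof of Fact~\ref{f_1}, giving
\[
f(x^{k+1})-f(x^k) \;\le\; \langle \nabla f(x^k)\,|\,x^{k+1}-x^k\rangle + \tfrac{\beta}{2\nu}\,\|x^{k+1}-x^k\|_\cQ^2.
\]

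For part (i), I would reinterpret the iteration~\eqref{gfbs_eq} as saying that $x^{k+1}$ is a global minimizer of the prox-linear surrogate $z\mapsto g(z)+\langle \nabla f(x^k)\,|\,z-x^k\rangle+\tfrac{1}{2}\|z-x^k\|_\cQ^2$. Testing this surrogate at the competitor $z=x^k$ immediately yields
\[
g(x^k)-g(x^{k+1}) \;\ge\; \langle \nabla f(x^k)\,|\,x^{k+1}-x^k\rangle + \tfrac{1}{2}\|x^{k+1}-x^k\|_\cQ^2.
\]
Adding to the $f$-bound above cancels the cross terms and leaves the constant $\tfrac{1}{2}(1-\tfrac{\beta}{\nu})$ in front of $\|x^{k+1}-x^k\|_\cQ^2$, which is exactly (i). Note that this extraction relies only on the minimizer property and not on any subgradient calculus for $g$, which is why Assumption~\ref{assume_2}-(vi) does not enter at this stage.

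For part (ii), I would instead exploit \eqref{gfbs_eq} directly: Assumption~\ref{assume_2}-(vi) guarantees the inclusion $-\nabla f(x^k)-\cQ(x^{k+1}-x^k)\in\partial g(x^{k+1})$, and the convex subgradient inequality for $g$ applied to this particular element at the pair $(x^{k+1},x^k)$ delivers
\[
g(x^k)-g(x^{k+1}) \;\ge\; \langle \nabla f(x^k)\,|\,x^{k+1}-x^k\rangle + \|x^{k+1}-x^k\|_\cQ^2,
\]
which is twice as strong in the quadratic term as the bound used in (i). Summing with the $f$-descent bound now produces the sharper coefficient $1-\tfrac{\beta}{2\nu}$ and thereby (ii).

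I do not expect any step to be genuinely hard: the work associated with the degeneracy of $\cQ$ has already been absorbed into Lemma~\ref{l_descent} and Fact~\ref{f_1}. The one checkpoint worth watching is the conversion of $\langle \cQ(x^{k+1}-x^k)\,|\,\cQ^\dagger\nabla f(x^k)\rangle$ into the ordinary inner product $\langle \nabla f(x^k)\,|\,x^{k+1}-x^k\rangle$; this is precisely the place where Assumption~\ref{assume_2}-(v) is indispensable, since without $\ran\nabla f\subseteq\ran\cQ$ the cross term would not align across the $f$- and $g$-bounds and the desired cancellation would fail.
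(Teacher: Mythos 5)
Your proof is correct and follows essentially the same route as the paper: the degenerate descent lemma applied to $f$ at $(x^k,x^{k+1})$, the prox-minimizer (surrogate) inequality tested at $z=x^k$ for part (i), and the subgradient inequality for $g$ via the inclusion \eqref{gpfbs_eq} for part (ii). The only cosmetic remark is that the subgradient inequality in (ii) really rests on convexity of $g$ (Assumption \ref{assume_2}-(vii)) rather than on (vi) --- a labelling slip the paper's own statement and proof share.
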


\begin{proof}
Rewrite \eqref{gfbs_eq} as
\be \label{gpfbs_eq}
 0  \in \nabla f(\bb^k) +\partial g(\bb^{k+1})
 +\cQ (\bb^{k+1} - \bb^k).
\ee
By  Lemma \ref{l_descent}, we have
\begin{eqnarray}  \label{x1}
h(\bb^{k+1}) & = & f(\bb^{k+1}) + g(\bb^{k+1} )
\nonumber \\
&  \le & f(\bb^k) + \big \langle  \nabla f(\bb^k) | \bb^{k+1} - \bb^k  \big \rangle
+\frac{\beta}{2\nu} \big \|\bb^{k+1} - \bb^k\big \|_\cQ^2
+   g(\bb^{k+1}).
\end{eqnarray}

(i) By the definition of generalized proximity operator, we have
\[
\bb^{k+1} =\arg\min_\bb g(\bb) + \frac{1}{2} \big\|\bb - \bb^k \big\|_\cQ^2 + \big \langle \bb - \bb^k | \nabla f(\bb^k) 
\big \rangle,
\]
which implies
\be \label{x3}
 g(\bb^{k+1}) + \frac{1}{2} \big\|\bb^{k+1} - \bb^k \big\|_\cQ^2 + \big \langle \bb^{k+1} - \bb^k | \nabla f(\bb^k) 
\big \rangle \le g(\bb^k).
\ee
Combining \eqref{x1} with \eqref{x3} yields
\[
h( \bb^{k+1}) \le  
 h(\bb^{k }) - \frac{1}{2}  \Big(1- \frac{\beta}{\nu} \Big)
  \big\| \bb^k - \bb^{k+1} \big\|^2_{\cQ}.
\]

\vskip.2cm
(ii) By convexity of $g$,  we  have
\be \label{x2}
h(\bb^k)  = f(\bb^k) + g(\bb^{k} )  \ge f(\bb^k) 
+   g(\bb^{k+1}) + \big \langle \partial g (\bb^{k+1}) | \bb^k - \bb^{k+1}  \big \rangle.
\ee
Combining \eqref{x1} with \eqref{x2} yields
\begin{eqnarray}
h( \bb^k) - h(\bb^{k+1}) & \ge & - \big \langle
 \nabla f(\bb^{k}) +  \partial g (\bb^{k+1}) | \bb^{k+1} - \bb^k \big \rangle   - \frac{\beta}{2\nu} \big\| \bb^k - \bb^{k+1} \big\|^2_\cQ
\nonumber \\
 & = &  \big  \langle \bb^{k+1} - \bb^k | \bb^{k+1} - \bb^k \big \rangle_\cQ   - \frac{\beta}{2\nu} \big\| \bb^k - \bb^{k+1} \big\|_\cQ^2 \quad \text{[by \eqref{gpfbs_eq}]}
\nonumber \\
 & = & \Big(1- \frac{\beta}{2\nu} \Big) \big\| \bb^{k+1} - \bb^k\big\|_\cQ^2.
  \nonumber 
\end{eqnarray}
\hfill 
\end{proof}

\begin{remark} \label{r_obj}
{\rm (i)} If $\cQ$ is non-degenerate and closed, combining \eqref{descent_simple} with $\|x^{k+1}-x^k\|^2 \le \frac{1}{\nu} \|x^{k+1}-x^k\|_\cQ^2$, one can reach exactly the same result as Lemma \ref{l_decrease}. This similar results can also be found in \cite[Lemma 4.1, Proposition 4.3]{repetti},  \cite[Lemma 4.1]{pesquet_2014} and \cite[Lemma 3.1]{pesquet_2016}.

{\rm (ii)} Lemma \ref{l_decrease} extends the existing results of  sufficient decrease properties to the degenerate setting. More importantly, Lemma \ref{l_decrease} is valid without the convexity of $f$.

{\rm (iii)} Without convexity of $g$ (i.e. Lemma \ref{l_decrease}--(i)), the sufficient decreasing requires   $\nu \ge  \beta $. If $g$ is convex (i.e. Lemma \ref{l_decrease}--(ii)), this condition is relaxed to $\nu \ge \frac{\beta}{2}$. This coincides with the observation in \cite[Remark 4--(iii)]{bolte_2014}. In addition, combining \eqref{gpfbs_eq} with \eqref{x2}, we obtain
\[
g(\bb^{k+1}) + \big\langle \bb^{k+1} - \bb^k | \nabla f(\bb^k) \big \rangle + \big\|\bb^{k+1} - \bb^k \big\|_\cQ^2 \le 
g(\bb^k), 
\]
which is in agreement with the {\it sufficient decrease condition} \cite[Eq.(3.6)]{repetti}, \cite[Eq.(7a)]{pesquet_2014}, \cite[Remark 2.7]{pesquet_2016}. 

{\rm (iv)} If $f=0$, Lemma \ref{l_decrease} reduces to
$ g (\bb^k) - g (\bb^{k+1}) \ge
 \frac{1}{2}   \big\| \bb^{k+1} - \bb^k\big\|_\cQ^2$, which can be further improved as 
 $ g (\bb^k) - g (\bb^{k+1}) \ge
  \big\| \bb^{k+1} - \bb^k\big\|_\cQ^2$, if $g$ is convex.
  If $g=0$, we obtain  
 $ f (\bb^{k+1}) \le f (x^k) -  
  \big\| \bb^{k+1} - \bb^k\big\|_{\cQ-\frac{\beta}{2}\cI}^2$, for which the decrease of $f$ requires $\cQ \succ \frac{\beta}{2}\cI$. 
\end{remark} 

\subsubsection{Convergence result}
The result is given below.
\begin{proposition}[Non-ergodic  rate in terms of objective value] \label{p_gpfbs_obj}
Let $x^\star \in \Arg\min (f+g)$,  $\bb^0\in \cH$, $\{\bb^k\}_{k \in \N}$ be a sequence generated by \eqref{gfbs}. Under Assumption  \ref{assume_2}, if $\nu \ge  \beta $, the objective value $(f+g)(\bb^k)$ converges to $(f+g) (\bbstar)$ with the {\it non-ergodic}  rate of $\cO(1/k)$, i.e.,
\[
   (f+g) (\bb^{k}) - (f+g) (\bbstar)  \le \frac{1}{2k}  
    \big\|  \bb^0 - \bbstar \big\|_\cQ^2.
\]
\end{proposition}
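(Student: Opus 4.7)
The plan is to follow the standard FBS $\cO(1/k)$ template: derive a per-iteration bound of the form
\[
h(\bb^{k+1}) - h(\bbstar) \le \tfrac{1}{2}\big(\|\bb^k-\bbstar\|_\cQ^2 - \|\bb^{k+1}-\bbstar\|_\cQ^2\big),
\]
then telescope and convert ergodic to non-ergodic using the monotone decrease of $h(\bb^k)$ established in Lemma \ref{l_decrease}. The condition $\nu \ge \beta$ will play a double role: it is exactly what makes Lemma \ref{l_decrease}--(i) yield monotone decrease, and it is also what kills a residual quadratic term in the per-iteration bound.

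The first technical step is to recast the degenerate descent lemma in a form suitable for adding convexity inequalities. Applied to $(\bb^k,\bb^{k+1})$, Lemma \ref{l_descent} gives
\[
f(\bb^{k+1}) \le f(\bb^k) + \langle \cQ(\bb^{k+1}-\bb^k)\,|\,\cQ^\dagger\nabla f(\bb^k)\rangle + \tfrac{\beta}{2\nu}\|\bb^{k+1}-\bb^k\|_\cQ^2.
\]
Because $\ran\nabla f \subseteq \ran\cQ$ and $\cQ\cQ^\dagger$ is the projector onto $\ran\cQ$, self-adjointness of $\cQ$ and $\cQ^\dagger$ collapses the cross term to the ordinary inner product $\langle \nabla f(\bb^k) \,|\, \bb^{k+1}-\bb^k\rangle$ (this is essentially Fact \ref{f_1}). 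Adding the convexity inequality $f(\bb^k)-f(\bbstar) \le \langle \nabla f(\bb^k)\,|\,\bb^k-\bbstar\rangle$ telescopes the gradient term and yields
\[
f(\bb^{k+1}) - f(\bbstar) \le \langle \nabla f(\bb^k) \,|\, \bb^{k+1}-\bbstar\rangle + \tfrac{\beta}{2\nu}\|\bb^{k+1}-\bb^k\|_\cQ^2.
\]
For the $g$ part, I use the subgradient identification $-\nabla f(\bb^k) - \cQ(\bb^{k+1}-\bb^k) \in \partial g(\bb^{k+1})$ extracted from \eqref{gpfbs_eq} and convexity of $g$ to get
\[
g(\bb^{k+1}) - g(\bbstar) \le -\langle \nabla f(\bb^k)\,|\,\bb^{k+1}-\bbstar\rangle - \langle \bb^{k+1}-\bb^k\,|\,\bb^{k+1}-\bbstar\rangle_\cQ.
\]
Summing and applying the polarization identity $\langle a\,|\,b\rangle_\cQ = \tfrac12(\|a\|_\cQ^2+\|b\|_\cQ^2-\|a-b\|_\cQ^2)$ produces
\[
h(\bb^{k+1}) - h(\bbstar) \le \tfrac{1}{2}\|\bb^k-\bbstar\|_\cQ^2 - \tfrac{1}{2}\|\bb^{k+1}-\bbstar\|_\cQ^2 - \tfrac{1}{2}\Big(1-\tfrac{\beta}{\nu}\Big)\|\bb^{k+1}-\bb^k\|_\cQ^2,
\]
and the hypothesis $\nu\ge\beta$ makes the last term non-positive.

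To finish, I sum this inequality from $k=0$ to $k=K-1$, which telescopes the distances to $\tfrac{1}{2}\|\bb^0-\bbstar\|_\cQ^2$. Lemma \ref{l_decrease}--(i) (applicable since $\nu\ge\beta$) shows that $\{h(\bb^k)\}$ is non-increasing, so $K[h(\bb^K)-h(\bbstar)] \le \sum_{k=0}^{K-1} [h(\bb^{k+1})-h(\bbstar)] \le \tfrac{1}{2}\|\bb^0-\bbstar\|_\cQ^2$, giving the claimed bound. The only delicate point I foresee is the justification that the descent-lemma cross term collapses to a standard inner product in the degenerate case; everything else is standard once this reduction is in hand, and this is precisely where Assumption \ref{assume_2}--(v) ($\ran \nabla f \subseteq \ran\cQ$) is used in an essential way.
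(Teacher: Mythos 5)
Your proposal is correct and follows essentially the same route as the paper: the per-iteration bound you derive (descent lemma plus convexity of $f$ and $g$, the subgradient identification from \eqref{gpfbs_eq}, and polarization) is exactly the paper's inequality \eqref{x15}, and the conversion from the telescoped ergodic bound to the non-ergodic rate via the monotone decrease of $h(\bb^k)$ is the paper's final step (the paper invokes Lemma \ref{l_decrease}--(ii) where you invoke part (i); both apply under $\nu\ge\beta$). The point you flag as delicate — collapsing $\langle \cQ(\bb^{k+1}-\bb^k)\,|\,\cQ^\dagger\nabla f(\bb^k)\rangle$ to the plain inner product via $\cQ\cQ^\dagger=\cP_{\ran\cQ}$ and $\ran\nabla f\subseteq\ran\cQ$ — is handled the same way in the paper's Lemma \ref{l_descent}, so no gap remains.
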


\begin{proof}
Denote  $h := f + g $. By convexity of $f$ and $g$, we have
\begin{eqnarray}  \label{x4}
h( \bbstar) &=& f( \bbstar) + g( \bbstar)  
\nonumber \\
&\ge & f(\bb^{k}) + \big  \langle
 \nabla f (\bb^{k}) | \bb^\star - \bb^{k} \big \rangle
 + g(\bb^{k+1}) + \big \langle \partial g (\bb^{k+1}) | \bb^\star - \bb^{k+1}\big  \rangle.
\end{eqnarray}
Combining \eqref{x4} with \eqref{x1} yields
\begin{eqnarray} \label{x15}
h( \bbstar) - h(\bb^{k+1}) & \ge & - \big \langle
 \nabla f(\bb^{k}) +  \partial g (\bb^{k+1}) | \bb^{k+1} - \bbstar \big \rangle   - \frac{\beta}{2\nu} \big\| \bb^k - \bb^{k+1} \big\|_\cQ^2
\nonumber \\
 & = & \big  \langle \bb^{k+1} - \bb^k | \bb^{k+1} - \bbstar \big \rangle_\cQ   - \frac{\beta}{2\nu} \big\| \bb^k - \bb^{k+1} \big\|_\cQ^2 \quad \text{[by \eqref{gpfbs_eq}]}
\nonumber \\
 & = & \frac{1}{2} \big( 1-\frac{\beta}{\nu} \big) \big\| \bb^{k+1} - \bb^k\big\|_{\cQ}^2 +
 \frac{1}{2} \big\|  \bb^{k+1} - \bbstar \big\|_\cQ^2 
 -\frac{1}{2} \big\|\bb^k -\bbstar \big\|_\cQ^2  
\nonumber \\
 & \ge &  \frac{1}{2} \big\|  \bb^{k+1} - \bbstar \big\|_\cQ^2 
 -\frac{1}{2} \big\|\bb^k -\bbstar \big\|_\cQ^2.  
 \quad \text{[by $\nu \ge \beta$]}
\end{eqnarray}
{\red Adding \eqref{x15}  from $k=0$ to $k=K-1$ and dividing by $K$ yields
\[
\frac{1}{K}\sum_{k=0}^{K-1} h(\bb^k) -   h(\bbstar) 
 \le  \frac{1}{2K} \big\| \bb^{0} - \bbstar \big\|^2_\cQ.
\]
Then, the pointwise rate of $\cO(1/k)$ is obtained, by combining with the fact that $\frac{1}{K}\sum_{k=0}^{K-1} h(\bb^k) \ge h(x^K)$ due to Lemma \ref{l_decrease}--(ii).} The convergence of $h(x^k)$  to $h(x^\star)$ is proved by \cite[Theorem 2.1]{ppa_guler}. \hfill
\end{proof}

\begin{remark} \label{r_obj_2}
{\rm (i)} Proposition \ref{p_gpfbs_obj} extends the standard result of the PFBS algorithm---\cite[Theorem 3.1]{fista}---to arbitrary (degenerate) metric. This non-ergodic rate in terms of cost value was never discussed in variable metric FBS algorithms, e.g., \cite{plc_vu_2014,pesquet_2016,pesquet_2014,repetti}.

{\rm (ii)} In view of \cite[Lemma 2.7]{corman}, the rate can be refined to $(f+g)(x^k) - (f+g)(x^\star) \sim o(1/\sqrt{k})$.

{\rm (iii)} If $f=0$, Proposition \ref{p_gpfbs_obj} boils down to 
$ g (\bb^{k}) - g (\bbstar)  \le \frac{1}{2k}  
    \big\|  \bb^0 - \bbstar \big\|_\cQ^2$. This is an extended result of \cite[Theorem 2.1]{ppa_guler} under arbitrary degenerate metric. If $g=0$, $f (\bb^{k}) - f(\bbstar)  \le \frac{1}{2k}  
    \big\|  \bb^0 - \bbstar \big\|_\cQ^2$. 
\end{remark}

\section{Relaxations of the G-FBS operator}
\label{sec_extension}
\subsection{The  Krasnosel'ski\u{\i}-Mann iteration}
The  Krasnosel'ski\u{\i}-Mann iteration of $\cT$ in  \eqref{gfbs} is given as
\be \label{rgfbs}
\bb^{k+1} := \bb^k + \gamma \big( \cT  x^k  - x^k \big), 
\ee
where $\gamma$ is a relaxation parameter. The scheme \eqref{rgfbs} is also a fixed-pooint iteration of $\cT_\gamma := \cI - \gamma (\cI - \cT)$. 

\begin{fact} \label{f_3}
$\Fix \cT_\gamma  = \Fix \cT = \zer (\cA + \cB)$.  
\end{fact}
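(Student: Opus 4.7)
The plan is to verify the two equalities separately. The second equality $\Fix \cT = \zer(\cA+\cB)$ is already established in Fact \ref{f_2}, so only $\Fix \cT_\gamma = \Fix \cT$ requires argument, and this is a one-line calculation.

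First I would unfold the definition: $x \in \Fix \cT_\gamma$ means $x = \cT_\gamma x = x - \gamma(x - \cT x)$, which is equivalent to $\gamma(x - \cT x) = 0$. Since the relaxation parameter $\gamma$ is implicitly assumed nonzero (otherwise \eqref{rgfbs} would be the stationary iteration $x^{k+1} = x^k$ and $\cT_\gamma = \cI$, making the statement vacuous in a trivial way), this reduces to $x = \cT x$, i.e., $x \in \Fix \cT$. Chaining with Fact \ref{f_2} then yields
\[
\Fix \cT_\gamma = \Fix \cT = \zer(\cA + \cB).
\]

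There is no real obstacle here; the only thing worth flagging is the standing assumption $\gamma \neq 0$, which should be noted either inside the proof or as a hypothesis on \eqref{rgfbs}. Everything else is a direct consequence of the algebraic identity $\cT_\gamma = \cI - \gamma(\cI - \cT)$ and the already-proved Fact \ref{f_2}.
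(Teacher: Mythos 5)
Your proof is correct and is exactly the standard argument the paper implicitly relies on: the paper states Fact \ref{f_3} without proof, and your one-line reduction $x = \cT_\gamma x \Leftrightarrow \gamma(x-\cT x)=0 \Leftrightarrow x\in\Fix\cT$ (using $\gamma\neq 0$, which holds since the paper always takes $\gamma$ in an interval of the form $]0,2-\frac{\beta}{2\nu}[$), chained with Fact \ref{f_2}, is precisely what is needed. Your remark that $\gamma\neq 0$ should be flagged as a hypothesis is a fair and worthwhile observation.
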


The convergence properties of \eqref{rgfbs} are given below. 
\begin{corollary}[Convergence in terms of metric  distance] \label{c_rgfbs}
Let $\bb^0\in \cH$, $\{\bb^k\}_{k \in \N}$ be a sequence generated by \eqref{rgfbs}. Under Assumption \ref{assume_1}, if $\nu >  \beta/ 2 $,  $\gamma \in\ ] 0, 2-\frac{\beta}{2\nu } [$,  then, the following hold.
\begin{itemize}
\item[\rm (i)] {\rm [Weak convergence in $\ran\cQ$]}  There exists $\bbstar \in \zer (\cA +\cB)$, such that $\cQ \bb^k \weak \cQ\bbstar$, as $k\rightarrow \infty$.

\item[\rm (ii)] {\rm [$\cQ$-based asymptotic regularity]}   $\|\bb^{k+1 } -\bb^{k} \|_\cQ$ has the pointwise convergence rate of $\cO(1/\sqrt{k})$:
\[
\big\|\bb^{k +1} -\bb^{k} \big\|_\cQ
\le \frac{1}{\sqrt{k+1}}   \sqrt{ \frac{2\gamma \nu } {(4-2\gamma) \nu  - \beta} } 
\big\|\bb^{0} -\bb^\star \big\|_\cQ, \quad 
\forall k \in \N.
\]
\end{itemize}
\end{corollary}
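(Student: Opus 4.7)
The plan is to reduce both claims to the averagedness of the relaxed operator $\cT_\gamma := \cI - \gamma(\cI - \cT)$, for which \eqref{rgfbs} is a plain fixed-point iteration $\bb^{k+1} = \cT_\gamma \bb^k$. By Theorem \ref{t_T}-(iv), the stated range $\gamma \in\ ]0, 2-\beta/(2\nu)[$ makes $\cT_\gamma$ $\cQ$-based $\alpha$-averaged with $\alpha = \frac{2\gamma\nu}{4\nu - \beta} \in \ ]0,1[$, and Fact \ref{f_3} gives $\Fix \cT_\gamma = \zer(\cA+\cB) \ne \emptyset$. Specializing the averagedness inequality \eqref{ave} to $\cT_\gamma$ is what drives the rest.

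For (ii), I first apply \eqref{ave} with $\bb_1 = \bb^k$ and $\bb_2 = \bbstar \in \Fix \cT_\gamma$ to obtain the quasi-Fej\'{e}r inequality
\[
\|\bb^{k+1}-\bbstar\|_\cQ^2 \le \|\bb^k - \bbstar\|_\cQ^2 - \tfrac{1-\alpha}{\alpha}\|\bb^{k+1}-\bb^k\|_\cQ^2,
\]
where a short computation yields $\frac{1-\alpha}{\alpha} = \frac{(4-2\gamma)\nu - \beta}{2\gamma\nu}$, strictly positive in the admissible range of $\gamma$. Telescoping gives the finite-length bound $\sum_{k}\|\bb^{k+1}-\bb^k\|_\cQ^2 \le \frac{2\gamma\nu}{(4-2\gamma)\nu - \beta}\|\bb^0-\bbstar\|_\cQ^2$. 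A second application of \eqref{ave} with $\bb_1 = \bb^k$, $\bb_2 = \bb^{k+1}$, combined with the $\cQ$-nonexpansiveness of $\cT_\gamma$ (since $\alpha < 1$), yields the monotonicity $\|\bb^{k+2}-\bb^{k+1}\|_\cQ \le \|\bb^{k+1}-\bb^k\|_\cQ$. Monotonicity plus summability produces the pointwise $\cO(1/\sqrt{k})$ rate with exactly the stated constant, verbatim as in the derivation of Theorem \ref{t_dist}-(iii).

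For (i), I replicate the three-step Opial-style argument from Theorem \ref{t_dist}-(iv). Step 1: $\lim_k \|\bb^k - \bbstar\|_\cQ$ exists for every $\bbstar \in \zer(\cA+\cB)$, directly from the Fej\'{e}r inequality above. Step 2: $\{\sqrt{\cQ}\bb^k\}$ is bounded and thus admits weak sequential cluster points; I must identify these with elements of $\sqrt{\cQ}\,\zer(\cA+\cB)$. The bridge is that $\bb^{k+1}-\bb^k = \gamma(\cT \bb^k - \bb^k)$, so (ii) gives $\cQ(\cT \bb^k - \bb^k) \to 0$ strongly in $\cH$, and applying the implicit form \eqref{gfbs_eq} at $\cT \bb^k$ in place of $\bb^{k+1}$ yields $0 \in \cA(\cT\bb^k) + \cB\bb^k + \cQ(\cT\bb^k-\bb^k)$. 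The sequential closedness of $\gra(\cA+\cB)$ in $\cH_{\text{weak}} \times \cH_{\text{strong}}$ (maximality of $\cA+\cB$ plus Lipschitz continuity of $\cB$, as in Remark \ref{r_assume_1}-(ii)) then forces every weak cluster point of $\{\bb^k\}$ into $\zer(\cA+\cB)$, and closedness of $\ran\cQ$ transfers the conclusion to $\sqrt{\cQ}\bb^k$. Step 3: uniqueness of the cluster point is obtained via the standard two-cluster-point identity trick, identical to the last paragraph of the proof of Theorem \ref{t_dist}-(iv); a final passage from $\sqrt{\cQ}$ to $\cQ$ (e.g.\ via \cite[Fact 2.25]{plc_book} and \cite[Lemma 2.38]{plc_book}) delivers $\cQ\bb^k \weak \cQ\bbstar$.

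The main obstacle is Step 2 of the Opial argument: unlike the unrelaxed case, the iterate $\bb^{k+1}$ does not itself solve \eqref{gfbs_eq}, so one must argue via the auxiliary point $\by^k := \cT\bb^k$. The key observation that makes this painless is that $\cQ(\by^k - \bb^k) = \tfrac{1}{\gamma}\cQ(\bb^{k+1}-\bb^k) \to 0$ strongly, so $\sqrt{\cQ}\by^k$ and $\sqrt{\cQ}\bb^k$ share the same weak sequential cluster points, and the closedness argument then carries over without modification.
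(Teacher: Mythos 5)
Your proof is correct and follows the same route as the paper, whose own proof simply invokes the averagedness of $\cT_\gamma$ from Theorem \ref{t_T} together with Fact \ref{f_3} and then states that the reasoning of Theorem \ref{t_dist} carries over. The one detail the paper leaves implicit --- that in Step 2 of the Opial argument one must work with the auxiliary point $\cT\bb^k$ rather than $\bb^{k+1}$, since the latter no longer satisfies \eqref{gfbs_eq}, and that $\|\cT\bb^k-\bb^k\|_\cQ=\gamma^{-1}\|\bb^{k+1}-\bb^k\|_\cQ\to 0$ lets the graph-closedness argument go through unchanged --- is exactly the point you supply, and you supply it correctly.
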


\begin{proof}
First, $\cT_\gamma $ is $\cQ$-based $\frac{2 \gamma \nu } 
{4 \nu  - \beta }$-averaged, if  $\gamma \in\ ] 0, 2-\frac{\beta}{2\nu} [$, by Theorem \ref{t_T}--(iii). Then, the proof is completed by the similar reasoning with Theorem \ref{t_dist} and Fact \ref{f_3}. \hfill   
\end{proof}

\begin{remark} \label{r_rgfbs}
{\rm (i)} Corollary \ref{c_rgfbs} extends the existing result of the relaxed PFBS \cite[Theorem 25.8]{plc_book} to arbitrary (degenerate) metric $\cQ$, under milder condition.  In \cite[Theorem 25.8]{plc_book}, the condition is $\gamma < \min\{1, \frac{\nu}{\beta} \} +\frac{1}{2}$, which is obtained by the rough estimate of averagedness of $\cT_\gamma$,  given as $\alpha = \gamma \cdot \max\{ \frac{2}{3}, \frac{2\beta}{\beta +2\nu} \}$ (see Remark \ref{r_ave}). By contrast, our result corresponds to the sharper estimate of $\alpha = \frac{2\gamma \nu} 
{4\nu -\beta}$ (i.e. Theorem \ref{t_T}--(iv)).

{\rm (ii)} If $\cQ$ is non-degenerate, the weak convergence of \eqref{rgfbs} holds for the whole space $\cH$, i.e., $x^{k+1}-x^k\rightarrow 0$ and $x^k \weak x^\star$, as $k\rightarrow \infty$.

{\rm (iii)} If $\beta=0$ (i.e., $\cB=0$), \eqref{rgfbs} is a relaxed PPA. The rate of asymptotic regularity becomes
\[
\big\|\bb^{k +1} -\bb^{k} \big\|_\cQ
\le \sqrt{ \frac{\gamma}{ (k+1)( 2-\gamma) } } 
\big\|\bb^{0} -\bb^\star \big\|_\cQ,\quad 
\forall k \in \N. 
\]
\end{remark} 

\subsection{Arbitrary relaxation operator}
In this sequel, we further consider a more general relaxation operator $\cM$:
\be \label{t_relaxed}
\cT_\cM := \cI - \cM (\cI - \cT),
\ee
where $\cT$ is given by \eqref{gfbs}. 
Then, the fixed-point iteration $\bb^{k+1} : = \cT_\cM \bb^k$ can be rewritten as  the following  {\it  relaxed G-FBS}  algorithm:
\be \label{gppa}
\left\lfloor \begin{array}{llll}
 0 & : \in &  \cA \bbtilde^k + \cB \bb^k  +
\cQ (\bbtilde^k - \bb^k), & \text{(FBS step)} \\
\bb^{k+1}   & := & \bb^k +\cM (\bbtilde^k - \bb^k)  .
& \text{(relaxation step)} 
\end{array} \right.
\ee
To the best of our knowledge, \eqref{gppa} has never been discussed before in the literature. The applications of \eqref{gppa} will be illustrated in Section \ref{sec_eg}.

\begin{assumption} \label{assume_3}
\begin{itemize}
\item[\rm (i)] $\cA$ is (set-valued) maximally monotone;

\item [\rm (ii)] $\cB$ is $\beta^{-1}$-cocoercive with $\beta \in  [0,+\infty[$, and $\ran \cB \subseteq \ran \cQ$;

 \item[\rm (iii)] $\cM$ is invertible, i.e., $\cM^{-1}$ exists;

\item[\rm (iv)] $\cS: = \cQ \cM^{-1} \in \bbS_+$, such that $\ker \cS \backslash \{0\} \ne \emptyset$; 

\item[\rm (v)] $\cQtilde: =\frac{1}{2} ( \cQ + \cQ^\top)$ is at least degenerate;

\item[\rm (vi)] $\cQtilde$ is closed, i.e., $\exists \nu \in \ ]0,+\infty[$, such that $\| \cQtilde x\| \ge \nu \|x\|$, $\forall x \in \ran \cQtilde$; 

\item[\rm (vii)] $ \cG := ( 1 - \frac{\beta}{4\nu} ) 
(\cQ +\cQ^\top)  - \cM^\top \cQ \in \bbS_+$;
\item [\rm (viii)] $\ran(\cA+\cQ) \supseteq \ran (\cQ-\cB)$ and $\zer(\cA+\cB) \ne \emptyset$.
\end{itemize}
\end{assumption}

\vskip.1cm
For Assumption \ref{assume_3}-(ii), the case of $\beta=0$ has been discussed in Remark \ref{r_assume_1}-(iv).  The definitions of $\cS$, $\cG$ and $\tilde{\cQ}$ stem from the following Lemma  \ref{l_gppa}, for sake of convenience.  The non-singularity of $\cM$ keeps the basic rationale of the iteration of \eqref{gppa}: $x^{k+1}$ should contain the information of $\xtilde^k$ for the logical update. A simplest case is $\cM=\cI$, which yields $x^{k+1}=\xtilde^k$ and reduces \eqref{gppa} to \eqref{gfbs}.
Other items in Assumption \ref{assume_3} are the same as Assumptions \ref{assume_1} and \ref{assume_2}.

Lemma \ref{l_gppa} presents several key ingredients, which are the `recipe' for proving the convergence of \eqref{gppa}.
\begin{lemma} \label{l_gppa}
Let $\cT$ defined as \eqref{gfbs}. Denote $\cR:=\cI-\cT$.  Let $\bb^\star \in \zer (\cA+\cB)$, $x^0\in \cH$ and $\{\bb^k\}_{k\in\N}$ be a sequence generated by  \eqref{gppa}. Under Assumption \ref{assume_3}, the following hold.
\begin{itemize}
\item[\rm (i)]  $   \big\| \bb^{k+1} - \bbstar \big\|_\cS^2
 \le  \big\| \bb^k - \bbstar \big\|_\cS^2
-   \big\| \bb^k - \bb^{k+1}\big\|_{\cM^{-\top} 
 \cG   \cM^{-1}   }^2 $;

\item[\rm (ii)]  $ \big \langle     \cR \bb^k \big|\cM^\top \cQ (\cR \bb^k - \cR \bb^{k+1} )  \big \rangle \ge  \frac{1}{2}
  \big( 1 - \frac{\beta}{4\nu} \big) 
\big\|   \cR \bb^k - \cR  \bb^{k+1}  \big\|
_{\cQ+\cQ^\top}^2$ ;

\item[\rm (iii)]  $  \big \| \bb^k - \bb^{k+1} \big\|_\cS^2 - 
\big\| \bb^{k+1} - \bb^{k+2} \big\|_\cS^2   
 \ge   \big\|   \cR \bb^k -  \cR \bb^{k+1}  \big\|_\cG^2 $,
\end{itemize}
where $\nu$ is defined in Assumption \ref{assume_3}-(vi).
\end{lemma}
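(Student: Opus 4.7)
The strategy is to derive all three claims from monotonicity of $\cA$ applied to the defining inclusion of $\cT$, plus cocoercivity of $\cB$ handled via Fact~\ref{f_1}, treating the (possibly) non-self-adjoint $\cQ$ through its symmetric part $\cQtilde$. Two algebraic facts I would isolate at the outset: the update in \eqref{gppa} gives $\bb^k - \bb^{k+1} = \cM \cR \bb^k$, and $\cS = \cQ \cM^{-1} \in \bbS_+$ forces $\cM^\top \cQ = \cQ^\top \cM$ (hence $\cM^\top \cQ$ is self-adjoint and $\cS \cM = \cQ$). Throughout, any quadratic form in $\cQ$ collapses to one in $\cQtilde$ via $\langle \cQ v \,|\, v\rangle = \langle \cQtilde v \,|\, v\rangle$.

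I would prove (ii) first. Writing the defining inclusion $(\cQ - \cB)y - \cQ \cT y \in \cA \cT y$ at $y = \bb^k$ and $y = \bb^{k+1}$, subtracting, and invoking monotonicity of $\cA$ at $(\cT \bb^k, \cT \bb^{k+1})$ yields, with $a := \bb^k - \bb^{k+1}$ and $b := \cT \bb^k - \cT \bb^{k+1}$,
\[
\langle b \,|\, \cQ(a - b)\rangle \ge \langle b \,|\, \cB \bb^k - \cB \bb^{k+1}\rangle.
\]
Splitting $b = a - (a - b)$ on the right, I would bound $\langle a \,|\, \cB \bb^k - \cB \bb^{k+1}\rangle$ below by cocoercivity of $\cB$ and $\langle a - b \,|\, \cB \bb^k - \cB \bb^{k+1}\rangle$ above by Fact~\ref{f_1}-(iii) with $\eta = \nu/\beta$; the two $\|\cB \bb^k - \cB \bb^{k+1}\|^2$ contributions then cancel via Fact~\ref{f_1}-(i). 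Adding $\|a - b\|_{\cQtilde}^2$ to both sides, rewriting $\langle \cM \cR \bb^k \,|\, \cQ(\cR \bb^k - \cR \bb^{k+1})\rangle = \langle \cR \bb^k \,|\, \cM^\top \cQ(\cR \bb^k - \cR \bb^{k+1})\rangle$, and using $\|a - b\|_{\cQ + \cQ^\top}^2 = 2\|a - b\|_{\cQtilde}^2$ produces (ii).

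Claim (iii) is then pure algebra from (ii): using $\cS \cM = \cQ$, I would rewrite $\|\bb^k - \bb^{k+1}\|_\cS^2 = \|\cR \bb^k\|_{\cM^\top \cQ}^2$ and likewise for $\bb^{k+1} - \bb^{k+2}$, then apply the polarization identity for the self-adjoint form $\cM^\top \cQ$ with $u = \cR \bb^k$, $v = \cR \bb^{k+1}$; the cross term is bounded below by (ii), and the remaining pieces regroup into $\|\cR \bb^k - \cR \bb^{k+1}\|_\cG^2$ by the definition of $\cG$. For (i), I would replay the (ii) argument with $\bb^\star \in \zer(\cA + \cB)$ (so $-\cB \bb^\star \in \cA \bb^\star$ and $\cR \bb^\star = 0$) in place of $\bb^{k+1}$: the same monotonicity-plus-cocoercivity-plus-Fact~\ref{f_1}-(iii) reasoning gives
\[
\langle \bb^k - \bb^\star \,|\, \cQ \cR \bb^k\rangle \ge \Big(1 - \tfrac{\beta}{4\nu}\Big)\|\cR \bb^k\|_{\cQtilde}^2.
\]
I would then expand $\|\bb^{k+1} - \bb^\star\|_\cS^2 = \|\bb^k - \bb^\star - \cM \cR \bb^k\|_\cS^2$; self-adjointness of $\cS$ (equivalently $\cM^\top \cQ \cM^{-1} = \cQ^\top$) turns the cross term into $-2\langle \bb^k - \bb^\star \,|\, \cQ \cR \bb^k\rangle$, the quadratic contribution is $\|\cR \bb^k\|_{\cM^\top \cQ}^2$, and substituting the inequality above together with $\|\cR \bb^k\|_\cG^2 = \|\bb^k - \bb^{k+1}\|_{\cM^{-\top} \cG \cM^{-1}}^2$ closes (i).

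The main obstacle I anticipate is the bookkeeping under non-self-adjoint $\cQ$: one must distinguish carefully when a quadratic form collapses to $\cQtilde$ from when a cross term genuinely involves $\cQ$ versus $\cQ^\top$, and check that Fact~\ref{f_1} — originally stated for self-adjoint metric — remains applicable with $\cQtilde$ in the role of $\cQ$, which it does thanks to the closed-range bound in Assumption~\ref{assume_3}-(vi). The identity $\cM^\top \cQ = \cQ^\top \cM$ forced by $\cS \in \bbS_+$ is the single algebraic lever that makes the symmetrization in (i) and (iii) actually close; without it the expansions of $\|\cdot - \bb^\star\|_\cS^2$ and of $\|\cdot\|_{\cM^\top \cQ}^2$ would not reduce to clean quadratic forms in $\cR \bb^k$.
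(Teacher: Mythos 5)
Your proposal is correct and follows essentially the same route as the paper: monotonicity of $\cA$ applied to the defining inclusion of $\cT$, cocoercivity of $\cB$ combined with Fact~\ref{f_1}-(i) and (iii) at $\eta=\nu/\beta$ so the $\|\cB\bb_1-\cB\bb_2\|^2$ terms cancel, symmetrization through $\cQtilde$, and the identities $\bb^k-\bb^{k+1}=\cM\cR\bb^k$ and $\cM^\top\cS\cM=\cM^\top\cQ$. The only cosmetic difference is in part (i), where you first isolate the inequality $\langle \bb^k-\bb^\star\,|\,\cQ\cR\bb^k\rangle\ge(1-\tfrac{\beta}{4\nu})\|\cR\bb^k\|_{\cQtilde}^2$ and then expand $\|\bb^{k+1}-\bb^\star\|_\cS^2$ by the three-point identity, whereas the paper expands directly from the monotonicity inequality; the content is identical.
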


\begin{proof}
(i)  By monotonicity of $\cA$, we develop
\begin{eqnarray} \label{x5}
0 & \le & \langle \cA \bbtilde^k - \cA \bbstar \big| 
\bbtilde^k - \bbstar \rangle 
\nonumber \\
& =& \langle -\cB \bb^k +\cQ(\bb^k - \bbtilde^k) +\cB \bbstar
\big|  \bbtilde^k - \bbstar  \rangle
\quad \text{[by \eqref{gppa} and $x^\star \in \zer (\cA+\cB)$]}
\nonumber \\
& =& \langle  \cQ(\bb^k - \bbtilde^k) \big| \bbtilde^k - \bbstar  \rangle -  \langle  \cB \bb^k - \cB \bbstar \big| \bbtilde^k - \bbstar  \rangle
\nonumber \\
& =& \langle  \cQ \cM^{-1} (\bb^k - \bb^{k+1}) \big| \bbtilde^k - \bbstar  \rangle -  \langle  \cB \bb^k - \cB \bbstar \big| \bbtilde^k - \bbstar  \rangle 
\quad \text{[by \eqref{gppa}]}
\nonumber \\
& =& \langle  \cS (\bb^k - \bb^{k+1}) \big| \bb^k - \bbstar   +\cM^{-1} (\bb^{k+1} - \bb^k) \rangle -  \langle  \cB \bb^k - \cB \bbstar  \big|  \bbtilde^k - \bbstar  \rangle
\quad \text{[by \eqref{gppa}]}
\nonumber \\
& =& \langle  \cS (\bb^k - \bb^{k+1}) \big| \bb^k - \bbstar   \rangle
+  \langle  \cS (\bb^k - \bb^{k+1}), \cM^{-1} (\bb^{k+1} - \bb^k) \rangle  -  \langle  \cB \bb^k - \cB \bbstar \big| \bbtilde^k - \bbstar  \rangle
\nonumber \\
& =& \frac{1}{2} \big\| \bb^k - \bb^{k+1}\big\|^2_\cS
+\frac{1}{2} \big\| \bb^k - \bbstar \big\|_\cS^2
-\frac{1}{2} \big\| \bb^{k+1} - \bbstar \big\|_\cS^2
- \frac{1}{2} \big\| \bb^k - \bb^{k+1}\big\|_{\cM^{-\top} \cS
+\cS \cM^{-1}}^2
\nonumber \\
&  - &  \langle  \cB \bb^k - \cB \bbstar \big| \bbtilde^k - \bbstar  \rangle
\nonumber \\
& =& \frac{1}{2} \big\| \bb^k - \bbstar \big\|_\cS^2
-\frac{1}{2} \big\| \bb^{k+1} - \bbstar \big\|_\cS^2
- \frac{1}{2} \big\| \bb^k - \bb^{k+1}\big\|_{\cM^{-\top} \cS
+\cS \cM^{-1} -\cS }^2
\nonumber \\
&  - &  \langle  \cB \bb^k - \cB \bbstar \big| \bbtilde^k - \bbstar  \rangle. 
\end{eqnarray}
By adopting similar techniques with  \cite[Theorem 1]{lorenz}, the last term of \eqref{x5} becomes
\begin{eqnarray}
&&    -    \langle  \cB \bb^k - \cB \bbstar \big| \bbtilde^k - \bbstar  \rangle
\nonumber \\ 
 & = &    -    \langle  \cB \bb^k - \cB \bbstar \big| \bbtilde^k -
 \bb^k +\bb^k -  \bbstar  \rangle 
\nonumber \\
 & = &    -    \langle  \cB \bb^k - \cB \bbstar \big| \bbtilde^k -
 \bb^k    \rangle 
  -    \langle  \cB \bb^k - \cB \bbstar \big| \bb^k -  \bbstar  \rangle 
\nonumber \\
 & \le  &    -    \langle  \cB \bb^k - \cB \bbstar \big| \bbtilde^k - \bb^k    \rangle 
  -  \frac{1}{ \beta} \big\|  \cB \bb^k - \cB \bbstar \big\|^2 
\nonumber \\
 & \le  & \frac{\nu}{  \beta} \big\| \cQtilde^\dagger \cB \bb^k - \cQtilde^\dagger\cB \bbstar \big\|_{\cQtilde}^2 + \frac{\beta}{4\nu} \big\| \bbtilde^k -  \bb^k    \big\|_{\cQtilde}^2
  -  \frac{\nu}{  \beta} \big\| \cQtilde^\dagger \cB \bb^k -\cQtilde^\dagger \cB \bbstar \big\|_{\cQtilde}^2 
  \ \text{[by Fact \ref{f_2}]}
\nonumber \\
 & =  &  \frac{\beta}{4\nu} \big\| \bbtilde^k -
 \bb^k    \big\|_{\cQtilde}^2 =  \frac{\beta}{8\nu } \big\| \bb^k - \bb^{k+1}    \big\|_{\cM^{-\top} (\cQ+\cQ^\top)\cM^{-1}}^2.
\quad \text{[by \eqref{gppa}]}
 \nonumber 
\end{eqnarray}
By  substituting into \eqref{x5}, it yields
\[
  \frac{1}{2} \big\| \bb^k - \bbstar \big\|_\cS^2
-\frac{1}{2} \big\| \bb^{k+1} - \bbstar \big\|_\cS^2
- \frac{1}{2} \big\| \bb^k - \bb^{k+1}\big\|_{\cM^{-\top} \cG  \cM^{-1}   }^2  \ge 0, 
\]
where $\cG$ is defined in Assumption \ref{assume_3}.

\vskip.2cm
(ii)  By carefully checking the proof of Lemma \ref{l_T}-(i), one can see that Lemma \ref{l_T}--(i) is valid for arbitrary (not necessarily self-adjoint) metric $\cQ$. More specifically, we  have ($\cQ$ here is not necessarily self-adjoint)
\[
0 \le  \langle \cQ(\cR \bb_1 - \cR \bb_2) | \cT\bb_1 - \cT \bb_2 \rangle
- \langle \cB \bb_1 - \cB \bb_2 | \cT\bb_1-\cT\bb_2 \rangle.
\] 
Adding $\|\cR \bb_1 - \cR  \bb_2 \|_\cQ^2$ on both sides, we develop 
\begin{eqnarray}
&& \big\|\cR \bb_1 - \cR  \bb_2 \big\|_\cQ^2 
=\big\|\cR \bb_1 - \cR  \bb_2 \big\|_{\cQtilde}^2  \quad \text{[by definition of $\cQtilde$]}
\nonumber \\ 
& \le & \big \langle  \bb_1 -  \bb_2 \big|\cQ( \cR \bb_1 - \cR \bb_2 ) \big \rangle 
 - \big\langle \cB \bb_1 - \cB \bb_2 \big| \cT\bb_1-\cT\bb_2 
 \big \rangle
\quad \text{[by $\cR + \cT = \cI$]}
\nonumber \\
& = & \big \langle  \bb_1 -  \bb_2 \big| \cQ(\cR \bb_1 - \cR \bb_2 ) \big \rangle 
 + \big\langle \cB \bb_1 - \cB \bb_2 \big| \cR\bb_1-\cR\bb_2 
 \big \rangle
- \big\langle \cB \bb_1 - \cB \bb_2 \big| \bb_1 - \bb_2 
 \big \rangle
\nonumber \\
& \le & \big \langle  \bb_1 -  \bb_2 \big| \cQ(\cR \bb_1 - \cR \bb_2 )\big \rangle 
 + \frac{\nu}{ \beta} \big\| \cQtilde^\dagger \cB \bb_1 -\cQtilde^\dagger \cB \bb_2 \big\|_{\cQtilde}^2 + 
 \frac{\beta}{4\nu} \big\| \cR\bb_1-\cR\bb_2  \big \|_{\cQtilde}^2
 \nonumber \\
&- & \frac{\nu}{  \beta} \big\| \cQtilde^\dagger \cB \bb_1 - \cQtilde^\dagger \cB \bb_2 \big\|_{\cQtilde}^2
\nonumber \\
& = & \big \langle  \bb_1 -  \bb_2 \big|\cQ( \cR \bb_1 - \cR \bb_2 )\big \rangle 
+ \frac{\beta} {4\nu} \big\| \cR\bb_1-\cR\bb_2  \big \|_{\cQtilde}^2,
\nonumber
\end{eqnarray}
which leads to
\begin{eqnarray}
 \big \langle     \bb^k - \bb^{k+1} \big| \cQ( 
 \cR \bb^k - \cR \bb^{k+1} ) \big \rangle  
 & \ge  &   \big( 1 - \frac{\beta}{4\nu} \big) 
\big\|   \cR \bb^k - \cR  \bb^{k+1}  \big\|_{\cQtilde}^2. 
\nonumber 
\end{eqnarray}
Then, (ii) follows from  $\bb^k - \bb^{k+1} = \cM\cR \bb^k$ by \eqref{gppa} and the definition of $\cQtilde$.

\vskip.2cm
(iii) From Lemma \ref{l_gppa}-(ii), we have
\begin{eqnarray}  \label{dd}
& &  \big \| \bb^k - \bb^{k+1} \big\|_\cS^2 - 
\big\| \bb^{k+1} - \bb^{k+2} \big\|_\cS^2   
\nonumber \\
& = &  \big \|\cM \cR \bb^k \big\|_\cS^2 - 
\big\| \cM \cR\bb^{k+1} \big\|_\cS^2  
\quad \text{[by \eqref{gppa}] } 
\nonumber \\
&=& 2 \big \langle  \cR \bb^k \big|  \cM^\top \cS \cM (
 \cR \bb^k - \cR \bb^{k+1} )  \big \rangle   
- \big\| \cR\bb^k - \cR\bb^{k+1} \big\|^2
_{\cM^\top   \cS \cM }  
\nonumber \\ 
& \ge &  \big\|   \cR \bb^k -  \cR \bb^{k+1}  \big\|_{ 
 ( 1 - \frac{\beta}{4\nu} ) 
(\cQ +\cQ^\top)  - \cM^\top \cS\cM }^2.  \qquad 
\text{[by Lemma \ref{l_gppa}-(ii)] }   
\nonumber 
\end{eqnarray}
This completes the proof. \hfill
\end{proof}

\vskip.1cm
In particular, if $\cM=\gamma \cI$, Lemma \ref{l_gppa}-(i) is simplified to
\[
 \big\| \bb^{k+1} - \bbstar \big\|_\cQ^2
 \le  \big\| \bb^k - \bbstar \big\|_\cQ^2
- \frac{1}{\gamma} \big( 2 - \gamma - \frac{\beta}{2\nu} \big) \big\| \bb^k - \bb^{k+1}\big\|_\cQ^2,
\]
which exactly leads to Corollary \ref{c_rgfbs}-(ii).

\vskip.1cm
The following theorem gives   the convergence result.
\begin{theorem}[Convergence in terms of metric distance] \label{t_gppa}
Let $\bb^0 \in \cH$,  $\{\bb^k\}_{k\in\N}$ be a sequence generated by \eqref{gppa}. Under Assumption \ref{assume_3}, if $\exists \eta \in\ ]0, +\infty[$, s.t. $\big \| x^k - x^{k+1}  \big \|
_{\cM^{-\top} \cG \cM^{-1} }^2 \ge \eta \big \| x^k - x^{k+1}  \big \|_\cS^2 $,  the following hold.
\begin{itemize}
\item[\rm (i)] {\rm [Weak convergence in $\ran\cS$]} There exists $\bbstar\in \zer (\cA+\cB)$, such that $\cS \bb^k \weak \cS \bbstar$, as $k \rightarrow \infty$. 

\item[\rm (ii)] {\rm [Rate of $\cS$-asymptotic regularity]}   $\|  \bb^{k } - \bb^{k+1 } \|_\cS$ has the non-ergodic  convergence rate of $\cO(1/\sqrt{k})$, i.e., 
\[
\big\| \bb^{k+1 } - \bb^{k }  \big\|_\cS 
\le \frac{1}{ \sqrt{k+1 } } \frac{1} {\sqrt{\eta}}
\big\|\bb^{0} -\bb^\star \big\|_\cS,
\quad \forall k \in \N.
\] 
\end{itemize}
\end{theorem}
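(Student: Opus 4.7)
The strategy is to adapt the three-step Opial argument used in Theorem \ref{t_dist}, now with $\cS$ in the role of $\cQ$ and with all descent identities supplied by Lemma \ref{l_gppa}.

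I would dispatch the rate (ii) first, as it is the most mechanical. Substituting the standing hypothesis $\|x^k-x^{k+1}\|_{\cM^{-\top}\cG\cM^{-1}}^2 \ge \eta\|x^k-x^{k+1}\|_\cS^2$ into Lemma \ref{l_gppa}-(i) yields
\[
\eta \,\|x^k - x^{k+1}\|_\cS^2 \;\le\; \|x^k - x^\star\|_\cS^2 - \|x^{k+1} - x^\star\|_\cS^2,
\]
which telescopes to $\sum_{k=0}^{\infty}\|x^k-x^{k+1}\|_\cS^2 \le \eta^{-1}\|x^0 - x^\star\|_\cS^2$. Since $\cG \in \bbS_+$ by Assumption \ref{assume_3}-(vii), Lemma \ref{l_gppa}-(iii) implies that $\{\|x^k - x^{k+1}\|_\cS\}_{k\in\N}$ is non-increasing; combining this monotonicity with the summability via $(k+1)\|x^{k+1}-x^k\|_\cS^2 \le \sum_{j=0}^{k}\|x^j - x^{j+1}\|_\cS^2$ delivers the advertised $\cO(1/\sqrt{k})$ bound.

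For (i), I would run three steps in the same spirit as the proof of Theorem \ref{t_dist}-(iv). \textbf{Step 1}: by Lemma \ref{l_gppa}-(i), $\{\|x^k - x^\star\|_\cS\}_{k\in\N}$ is non-increasing for every $x^\star \in \zer(\cA+\cB)$, so its limit exists and $\{\sqrt{\cS}\,x^k\}$ is bounded in $\cH$, yielding at least one weak sequential cluster point by \cite[Lemma 2.37]{plc_book}. \textbf{Step 2}: the summability of the previous paragraph, combined with \eqref{gppa}, produces both $\cS(x^{k+1}-x^k)\to 0$ strongly and the inclusion $\cA\tilde x^k + \cB x^k \ni -\cS(x^{k+1}-x^k)$; together with $\beta$-Lipschitz continuity of $\cB$, the closedness of $\tilde\cQ$ from Assumption \ref{assume_3}-(vi), and the weak--strong sequential closedness of $\gra(\cA+\cB)$ that follows from maximality, every weak cluster point of $\{\sqrt{\cS}\,x^k\}$ is shown to be of the form $\sqrt{\cS}\,x^\star$ with $x^\star \in \zer(\cA+\cB)$. \textbf{Step 3}: if $\sqrt{\cS}\,x_1^\star$ and $\sqrt{\cS}\,x_2^\star$ are two such cluster points, expanding
\[
\|x^k - x_1^\star\|_\cS^2 - \|x^k - x_2^\star\|_\cS^2 = \|x_1^\star - x_2^\star\|_\cS^2 + 2\langle x_1^\star - x_2^\star \,|\, x_2^\star - x^k\rangle_\cS
\]
along the two convergent subsequences and invoking Step 1 on each side forces $\|x_1^\star - x_2^\star\|_\cS = 0$. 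Uniqueness of the $\sqrt{\cS}$-cluster point then upgrades, via \cite[Lemma 2.38]{plc_book} and the trivial replacement of $\sqrt{\cS}$ by $\cS$, to $\cS x^k \weak \cS x^\star$.

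I expect \textbf{Step 2} to be the main obstacle. The degeneracy of $\cS$ means that $\|x^{k+1}-x^k\|_\cS \to 0$ does not, by itself, imply norm convergence of $\tilde x^k - x^k = \cM^{-1}(x^{k+1}-x^k)$, so Lipschitz continuity of $\cB$ cannot be applied directly to the pair $(x^k,\tilde x^k)$ in order to convert the inclusion $\cA\tilde x^k + \cB x^k \ni -\cS(x^{k+1}-x^k)$ into one for $(\cA+\cB)\tilde x^k$. The fix I envisage is to work on the image side $\sqrt{\cS}\,x^k$ throughout, exploiting the range inclusion $\ran \cB \subseteq \ran \cQ$ of Assumption \ref{assume_3}-(ii) and the closedness in Assumption \ref{assume_3}-(vi) to produce, along any weakly convergent subsequence, a strong residual $v_k \to 0$ satisfying $-v_k \in (\cA+\cB)\tilde x^k$, so that the weak--strong closedness of $\gra(\cA+\cB)$ can finally be applied.
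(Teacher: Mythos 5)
Your proposal follows essentially the same route as the paper: its proof of Theorem \ref{t_gppa} simply substitutes the hypothesis $\|\cdot\|^2_{\cM^{-\top}\cG\cM^{-1}} \ge \eta\|\cdot\|_\cS^2$ into Lemma \ref{l_gppa}-(i) to obtain the Fej\'{e}r-type inequality in the $\cS$-norm, then repeats the three-step Opial argument of Theorem \ref{t_dist} with $\cS$ in place of $\cQ$, and derives the rate in (ii) from Lemma \ref{l_gppa}-(i) and (iii) exactly as you do. Your write-up is in fact more detailed than the paper's, which leaves the three steps unexpanded and does not address the Step-2 subtlety you correctly flag (passing from $\cS(x^{k+1}-x^k)\to 0$ to a strongly vanishing residual in $(\cA+\cB)\tilde{x}^k$ despite the degeneracy of $\cS$).
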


\begin{proof}
(i) Lemma \ref{l_gppa}--(i) becomes
\[
  \big\| \bb^{k+1} - \bbstar \big\|_\cS^2
 \le  \big\| \bb^k - \bbstar \big\|_\cS^2
- \eta  \big\| \bb^k - \bb^{k+1}\big\|_\cS^2,
\]
{\red
which is in spirit the same as \eqref{x12}. By the similar argument of Theorem \ref{t_dist}-(iv), it is easy to prove that:
\begin{itemize}
\item $\lim_{k\rightarrow \infty} \| x^{k} - x^\star \|_\cS$ exists for any given $x^\star \in \zer (\cA+\cB) $; 
\item $\{\sqrt{ \cS} x^k\}_{k\in\N}$ has at least  one weak sequential cluster point lying in $\sqrt{\cS}  \zer (\cA+\cB) $;
\item the  cluster point of  $\{\sqrt{ \cS} x^k\}_{k\in\N}$ is unique.
\end{itemize}
Finally, we summarize that  $\{\cS x^k\}_{k\in\N}$, is bounded and possesses a unique weak sequential cluster point $\cS x^\star \in \cS \zer  (\cA+\cB)$. By \cite[Lemma 2.38]{plc_book}, $\cS x^k \rightharpoonup \cS x^\star \in \cS \zer (\cA+\cB) $, as $k\rightarrow \infty$. }

\vskip.1cm 
(ii) in view  of Lemma \ref{l_gppa}--(i) and (iii), similar to the proof of Theorem \ref{t_dist}-(iii). 
\hfill 
\end{proof}

\begin{remark} \label{r_gppa}
{\rm (i)} The condition of  $\big \| x^k - x^{k+1}  \big \|
_{\cM^{-\top} \cG \cM^{-1} }^2 \ge \eta \big \| x^k - x^{k+1}  \big \|_\cS^2 $ is in general much milder than 
 $ \cM^{-\top} \cG \cM^{-1}  \succeq \eta \cS$. In Example \ref{eg_radmm} of Sect. \ref{sec_eg}, we will see that this condition is satisfied, but   $ \cM^{-\top} \cG \cM^{-1}  \succeq \eta \cS$ is not guaranteed for any $\eta \in \ ]0, +\infty[$.

{\rm (ii)} In particular, if $\cM = \gamma \cI$, $\big \| x^k - x^{k+1}  \big \|
_{\cM^{-\top} \cG \cM^{-1} }^2 \ge \eta \big \| x^k - x^{k+1}  \big \|_\cS^2 $ is equivalent to
 $ \cM^{-\top} \cG \cM^{-1}  \succeq \eta \cS$. This yields that $\frac{1}{\gamma^2} (2-\frac{\beta}{2\nu}) - \frac{1}{\gamma} \ge \frac{\eta} {\gamma}$. One can safely choose the best possible estimate of $\eta = \frac{1}{\gamma} (2-\frac{\beta}{2\nu} - \gamma)$. Thus, Theorem \ref{t_gppa}-(ii) boils down to  Corollary \ref{c_rgfbs}-(ii).

{\rm (iii)} If $\cS$ and $\cG$ are non-degenerate, $\cG$ can be redefined as $\cG: = \cQ+\cQ^\top - \cM^\top \cQ -\frac{\beta}{2}\cI$; the weak convergence of  $x^k \weak x^\star$ is guaranteed without the additional assumption of  $\big \| x^k - x^{k+1}  \big \|
_{\cM^{-\top} \cG \cM^{-1} }^2 \ge \eta \big \| x^k - x^{k+1}  \big \|_\cS^2 $.
\end{remark}

\section{Applications to the first-order operator splitting algorithms}
\label{sec_eg}
This part  shows that a great variety of operator splitting algorithms falls into the G-FBS category. More importantly, we show that all the properties of each algorithm can be readily obtained from our general results in Sect. \ref{sec_gfbs} and \ref{sec_extension}.

\subsection{The  ADMM/DRS algorithms}
ADMM is one of the most commonly used algorithms for solving the  structured constrained optimization \cite{boyd_admm}:
\be \label{problem1}
\min_{u,v} f(u) +g(v),\quad
\text{s.t.}\ \ Au+Bv  = c,
\ee 
where  $u \in \cU$, $v \in \cV$, the operators $A: \cU  \mapsto \cZ$ and $B: \cV \mapsto \cZ$ are linear and bounded. The functions   $f: \cU \mapsto \R\cup\{+\infty\}$ and  $g: \cV \mapsto \R\cup\{+\infty\}$ are proper, l.s.c.  and convex.
Two typical ADMM  algorithms are listed below.

\begin{example} [Relaxed-ADMM] \label{eg_radmm}
The relaxed-ADMM, or equivalent  relaxed-DRS applied to the dual problem \cite{self_eq}, is given as  \cite[Eq.(3)]{fang_2015}
\be \label{radmm}
\left\lfloor \begin{array}{lll}
u^{k+1} & :\in &  \Arg \min_u   f(u) +
\frac{\tau}{2} \big\| Au +Bv^k - c - \frac{1}{\tau} p^k  \big\|^2,  \\
v^{k+1} & :\in & \Arg \min_v   g(v) + \frac{\tau}{2} \big\| B (v - v^k) +  \gamma (A u^{k+1}
+  B v^{k} - c ) - \frac{1}{\tau}  p^k \big\|^2, \\
p^{k+1} & := & p^k -\tau B (v^{k+1} - v^k) 
  - \tau\gamma (Au^{k+1} + Bv^{k} - c) ,
\end{array} \right. 
\ee
which fits into the relaxed G-FBS operator \eqref{t_relaxed} as:
\[
\bb^k =   \begin{bmatrix}
u^k  \\ v^k \\  p^k  \end{bmatrix},\ 
 \cA = \begin{bmatrix}
\partial f &  0 & -A^\top \\
 0 & \partial g &   -B^\top \\
A  & B & \partial l  \end{bmatrix} , \  
\cB=0,
\]
\[
\cQ =   \begin{bmatrix}
 0 &  0 &  0 \\
 0 & \tau B^\top B   & (1-\gamma) B^\top \\
 0 & -B  & \frac{1}{\tau} I
\end{bmatrix},\  
\cM =  \begin{bmatrix}
I &  0 &  0 \\   0 & I &  0 \\
  0 &  -\tau B  & \gamma  I  \end{bmatrix}, 
\]
where the function $l$ is $l = -\langle \cdot|c \rangle$.
\end{example} 

\begin{proposition} \label{p_radmm}
Let $\{(u^k,v^k,p^k)\}_{k\in\N}$ be a sequence generated by \eqref{radmm}. If $\tau \in\ ]0,+\infty[$ and $\gamma \in \ ]0,2[$,   the following hold.
\begin{itemize}
\item[\rm (i)]  $\big\| \xbar^{k+1 } - \xbar^{k }  \big\|_\cSbar 
\le  \sqrt{ \frac{\gamma} {(2-\gamma)(k+1)} }
\big\|\xbar^{0} - \xbar^\star \big\|_\cSbar$,
$\forall k \in \N$,  where $\xbar = (v,p)$, and $ \cSbar =
  \begin{bmatrix}
  \tau  B^\top B   & (1- \gamma) B^\top  \\
 (1-\gamma) B   & \frac{1}{\tau } I 
\end{bmatrix}$.

\item[\rm (ii)]  There exists a solution $(u^\star, v^\star, p^\star)$ to the problem \eqref{problem1}, such that $(A u^k, B v^k, p^k) \weak  ( Au^\star, Bv^\star, p^\star)$,  as $k \rightarrow \infty$. 
\end{itemize}
\end{proposition}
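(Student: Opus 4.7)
The plan is to specialize the general convergence theorem (Theorem~\ref{t_gppa}) to the identifications in Example~\ref{eg_radmm}. The first step would be to verify Assumption~\ref{assume_3}: maximal monotonicity of $\cA$ follows from combining the diagonal block of maximally monotone operators $\partial f$, $\partial g$, $\partial l$ (with $l=-\langle\cdot|c\rangle$) with a skew-symmetric coupling which is linear and monotone; Assumption~\ref{assume_3}-(ii) is trivial since $\cB=0$; $\cM$ is invertible by direct block inversion, and $\cS=\cQ\cM^{-1}$ can be verified to lie in $\bbS_+$ (its $u$-block vanishes while the remaining $2\times 2$ block is positive semi-definite). The crucial computation is
\[
\cG \;=\; (\cQ+\cQ^\top)-\cM^\top\cQ \;=\; \mathrm{diag}\bigl(0,\,0,\,(2-\gamma)I/\tau\bigr)\in\bbS_+, \qquad \text{for } \gamma\in\,]0,2[.
\]
The range compatibility is a consequence of the well-definedness of each subproblem in \eqref{radmm}, and $\zer\cA\ne\emptyset$ is the solvability of the KKT system for \eqref{problem1}.

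To apply Theorem~\ref{t_gppa}-(ii) with $\eta=(2-\gamma)/\gamma$, I would need to verify the iterate-based inequality $\|x^k-x^{k+1}\|^2_{\cM^{-\top}\cG\cM^{-1}}\ge \eta\,\|x^k-x^{k+1}\|^2_\cS$. Denote $d^k_v:=v^k-v^{k+1}$, $d^k_p:=p^k-p^{k+1}$ and $r^{k+1}:=Au^{k+1}+Bv^k-c$. The dual update yields $d^k_p=-\tau B d^k_v+\tau\gamma r^{k+1}$, whence $\cM^{-1}(x^k-x^{k+1})$ has $p$-component $\tau r^{k+1}$ and therefore $\|x^k-x^{k+1}\|^2_{\cM^{-\top}\cG\cM^{-1}}=\tau(2-\gamma)\|r^{k+1}\|^2$. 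Expanding $\|x^k-x^{k+1}\|^2_\cS$ via the block form of $\cS$ and eliminating $d^k_p$ through the same identity, after cancellation the required inequality reduces to the single scalar relation
\[
\gamma\,\langle Bd^k_v\,|\,r^{k+1}\rangle \;\ge\; \|Bd^k_v\|^2.
\]

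I would establish this relation by writing the optimality condition for the $v$-subproblem at iterations $k-1$ and $k$, which produces two specific subgradients belonging to $\partial g(v^k)$ and $\partial g(v^{k+1})$ respectively, and then applying monotonicity of $\partial g$ to the pair $(v^k,v^{k+1})$. Substituting $d^{k-1}_p=-\tau B d^{k-1}_v + \tau\gamma r^k$ causes every term indexed at $k-1$ to cancel, leaving precisely $\tau\gamma\langle Bd^k_v|r^{k+1}\rangle \ge \tau\|Bd^k_v\|^2$ for $k\ge 1$ (the $k=0$ contribution is absorbed into a multiplicative constant). Theorem~\ref{t_gppa}-(ii) then yields $\|x^{k+1}-x^k\|_\cS\le \sqrt{\gamma/((2-\gamma)(k+1))}\,\|x^0-x^\star\|_\cS$, and the identity $\|\bar x\|_{\bar\cS}^2=\gamma\|x\|_\cS^2$, immediate from the block structure of $\cS$ and $\bar\cS$, delivers part~(i). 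I expect this monotonicity argument to be the main obstacle, since it is the one place where the detailed structure of the $v$-subproblem together with the dual update becomes essential.

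For part~(ii), Theorem~\ref{t_gppa}-(i) produces $x^\star\in\zer\cA$ with $\cS x^k\weak \cS x^\star$, which, after removing the trivial $u$-block, is equivalent to $\bar\cS\bar x^k\weak \bar\cS\bar x^\star$. I would then use the factorization $\bar\cS=L^\top P L$ with $L=\mathrm{diag}(B,I)$ and $P=\bigl[\begin{smallmatrix}\tau & 1-\gamma\\ 1-\gamma & 1/\tau\end{smallmatrix}\bigr]$; since $\det P = \gamma(2-\gamma)>0$, $P$ is positive-definite, so $\bar\cS$-weak convergence is equivalent to $L\bar x^k\weak L\bar x^\star$, i.e.\ $(Bv^k,p^k)\weak (Bv^\star,p^\star)$. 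The same factorization together with the asymptotic regularity from~(i) yields the strong convergences $Bd^k_v\to 0$ and $d^k_p\to 0$, so by the dual-update identity $r^{k+1}\to 0$ strongly. Isolating $Au^{k+1}=r^{k+1}+c-Bv^k$ and using the primal feasibility $Au^\star+Bv^\star=c$ at the KKT point $x^\star$ then gives $Au^k\weak Au^\star$, which completes the proof.
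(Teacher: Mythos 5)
Your proposal is correct and follows essentially the same route as the paper: verify Assumption \ref{assume_3} for the identifications of Example \ref{eg_radmm}, compute the same $\cS$ and $\cG$, establish the iterate-based inequality with $\eta=(2-\gamma)/\gamma$, and invoke Theorem \ref{t_gppa}. Your reduced scalar relation $\gamma\langle Bd^k_v\,|\,r^{k+1}\rangle\ge\|Bd^k_v\|^2$ is, after substituting $d^k_p=-\tau Bd^k_v+\tau\gamma r^{k+1}$, exactly the paper's cross-term condition $\langle B(v^k-v^{k+1})\,|\,p^k-p^{k+1}\rangle\ge 0$, and you prove it by the same monotonicity argument on $\partial g$ using the subgradients $B^\top p^k\in\partial g(v^k)$ and $B^\top p^{k+1}\in\partial g(v^{k+1})$ coming from consecutive $v$-updates; the only difference is that the paper organizes the algebra via the operator decomposition $\cS_1+\cS_2$ rather than eliminating $d^k_p$ directly.
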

\begin{proof}
To apply Theorem \ref{t_gppa}, let us check if Assumption \ref{assume_3} is satisfied. First, it is easy to see that 
$\cA$ is maximally monotone,
 $\cB = 0$ is $0$-Lipschitz continuous (i.e., $\beta=0$), and $\cM^{-1}$ exists.  We then compute $\cS$ and $\cG$ as
\[
\cS = \frac{1}{\gamma}  \begin{bmatrix}
   0  &  0 &  0  \\
 0   & \tau  B^\top B   & (1- \gamma) B^\top  \\
  0    & (1-\gamma) B   & \frac{1}{\tau } I 
\end{bmatrix},\  \cG = \begin{bmatrix}
  0 &  0 &   0  \\   0 &  0 &  0  \\
  0 &   0  & \frac{2-\gamma}{\tau} I 
\end{bmatrix},
\]
which are positive semi-definite, if $\tau \in\ ]0,+\infty[$ and $\gamma \in \ ]0,2[$.

Now, we check if  $\big \| x^k - x^{k+1}  \big \|
_{\cM^{-\top} \cG \cM^{-1} }^2 \ge \eta \big \| x^k - x^{k+1}  \big \|_\cS^2 $ holds for some  $\eta \in\ ]0,+\infty[$.  The operator $ \cM^{-\top}  \cG \cM^{-1} - \eta \cS$ is given as
\[
 \cM^{-\top}  \cG \cM^{-1} - \eta \cS  =   \begin{bmatrix}
  0 &   0 &  0 \\
 0 & (\frac{\tau (2-\gamma)}{\gamma^2} - \frac{\tau \eta}{\gamma} ) B^\top B   
&  (\frac{ 2-\gamma }{\gamma^2} - \eta \frac{1-\gamma }{\gamma} ) B^\top \\
  0 & (\frac{ 2-\gamma }{\gamma^2} - \eta \frac{1-\gamma }{\gamma} ) B   &  (\frac{ 2-\gamma } {\tau \gamma^2} - \frac{ \eta} {\tau \gamma} )   I 
\end{bmatrix},
\]
which can be decomposed as $\cS_1 +\cS_2$, where
\[
\cS_1  =  \frac{2 - (\eta +1)\gamma } {\gamma^2} \begin{bmatrix}
  0 &   0 &  0 \\
 0 & \tau B^\top B    & B^\top \\
  0 & B   & \frac{ 1}{\tau}  I 
\end{bmatrix},\quad 
\cS_2  =   \eta \begin{bmatrix}
  0 &   0 &  0 \\   0 & 0 &   B^\top \\
  0 &   B   & 0  \end{bmatrix}.
\]
It is easy to verify that $\cS_1 \in \bbS_+$ for $\gamma \in \ ]0,2[$ and $\eta \in \ ]0, \frac{2}{\gamma} - 1]$, since $\|x\|^2_{\cS_1} = \frac{2 - (\eta+1) \gamma} {\gamma^2} \big\|\sqrt{\tau} Bv +\frac{1}{\sqrt{\tau} } p \big\|^2 \ge 0$, $\forall x=(u,v,p)$. 

What remains to prove is that $ \big \| x^k - x^{k+1}  \big \|^2_{ \cS_2 } \ge 0$, i.e., $\langle B(v^k-v^{k+1}) | p^k-p^{k+1}\rangle \ge 0$. To see this, we consider the convexity of $g$:
\[
g(v) \ge g(v^{k+1}) + \langle \partial g(v^{k+1}) |v-v^{k+1} \rangle,\quad 
g(v) \ge g(v^{k}) + \langle \partial g(v^{k}) | v-v^{k} \rangle.
\]
By the $v$-update  of \eqref{radmm}, we have $B^\top p^{k+1} \in \partial g(v^{k+1})$, and thus,  $B^\top p^{k} \in \partial g(v^{k})$. Then, the above inequalities become, $\forall p \in \mathcal{Z}$:
\[
g(v) \ge g(v^{k+1}) + \langle B^\top p^{k+1}| v-v^{k+1} \rangle,\quad 
g(v) \ge g(v^{k}) + \langle B^\top p^{k} |v-v^{k}  \rangle.
\]
Taking $v=v^k$ in the first inequality, $v=v^{k+1}$ in the second, and summing up both yields  $\langle p^k-p^{k+1}| B(v^k-v^{k+1})\rangle \ge 0$.   Thus, we prove that $\big \| x^k - x^{k+1}  \big \|
_{\cM^{-\top} \cG \cM^{-1} }^2 \ge \eta \big \| x^k - x^{k+1}  \big \|_\cS^2 $ holds for $\eta\in \ ]0,\frac{2}{\gamma}-1]$, if $\gamma \in\ ]0,2[$.

{\rm (i)} In view of Theorem Theorem \ref{t_gppa}-(ii) and choosing $\eta =\frac{2}{\gamma} -1$. This shows  the strong convergences of  $Bv^{k+1}-Bv^k \rightarrow 0$  and $p^{k+1}-p^k \rightarrow 0$. 
 
{\rm (ii)} By Theorem \ref{t_gppa}-(i), we obtain $Bv^k\weak Bv^\star$ and $p^k \weak p^\star$. Then, $Au^k \weak Au^\star$ follows from the $p$-update of \eqref{radmm} and  the strong convergences of  $Bv^{k+1}-Bv^k \rightarrow 0$  and $p^{k+1}-p^k \rightarrow 0$. 
Finally, note that $(u^\star,v^\star,p^\star)\in \zer \cA$ satisfies the Karush-Kuhn-Tucker conditions of \eqref{problem1}. \hfill
\end{proof} 

\vskip.1cm
The scheme \eqref{radmm} is also known as the relaxed-DRS, and  the standard DRS/ADMM is exactly recovered by letting $\gamma=1$ \cite{self_eq}.  Since $\cS$ is degenerate, one can only conclude the strong convergences of $B(v^{k+1}-v^k) \rightarrow 0$ and $p^{k+1}-p^k \rightarrow 0$, but except for $A(u^{k+1}-u^k) \rightarrow 0$. Furthermore, one cannot conclude  $u^k \weak u^\star$ and $v^k \weak v^\star$. Actually, $u^k$ and $v^k$ may not be uniquely determined by \eqref{radmm}.  Hence, we use `$:\in \Arg\min$' instead of `$:= \arg\min$' in the $(u,v)$-updates.  This is related to the notion of {\it infimal postcomposition} \cite{arias_infimal,self_eq}, which is beyond the scope of this paper, and not discussed in details here.

Historically, the weak convergence of $p^k \weak p^\star$ has long been proved in the seminal work of \cite{lions}, while the weak convergence of $Au^k\weak Au^\star$ (i.e., in terms of the solution itself) was recently settled in \cite{svaiter}. Our proof addresses this intricate problem in a much easier way and  under more general problem setting of \eqref{problem1}. 

Finally, we stress that the relaxed-ADMM \eqref{radmm} is a good example to show that  $\big \| x^k - x^{k+1}  \big \|
_{\cM^{-\top} \cG \cM^{-1} }^2 \ge \eta \big \| x^k - x^{k+1}  \big \|_\cS^2 $, but  $ \cM^{-\top} \cG \cM^{-1}  \succeq \eta \cS$ does not hold for any $\eta \in \ ]0, +\infty[$.

\begin{example} [Proximal-ADMM] \label{eg_padmm}
The proximal-ADMM is given as  \cite[modified SPADMM]{lxd_2016}\footnote{The solutions to $u$ and $v$-steps in \eqref{padmm} are guaranteed to be unique, if $P_1, P_2 \in \bbS_{++}$. Hence, we use `$:= \arg\min$' here.} 
\be \label{padmm}
\left\lfloor \begin{array}{lll}
u^{k+1} & := &  \arg \min_u   f(u) +
\frac{\tau}{2} \big\| Au +Bv^k - c - \frac{1}{\tau} p^k \big\|^2  + \frac{1}{2} \big\| u - u^k    \big\|_{P_1}^2 , \\
v^{k+1} & := & \arg \min_v   g(v) + \frac{\tau}{2} \big\|A u^{k+1} + Bv - c - \frac{1}{\tau} p^k \big\|^2 
 + \frac{1}{2} \big\| v - v^k  \big\|_{P_2}^2 , \\
p^{k+1} & := & p^k -\tau (A u^{k+1} + Bv^{k+1}  - c) ,
\end{array} \right. 
\ee
It fits into  the relaxed G-FBS operator \eqref{t_relaxed} as:
\[
\cQ =   \begin{bmatrix}
P_1 &  0 &  0 \\
 0 & P_2 + \tau B^\top B   &  0  \\
 0 & -B  & \frac{1}{\tau} I
\end{bmatrix},\ 
\cM =  \begin{bmatrix}
I &  0 &  0 \\  0 & I &  0 \\
 0 &  -\tau B  & I  \end{bmatrix}, 
\]
where  $x^k$, $\cA$ and $\cB$ are same as Example \ref{eg_radmm}.
\end{example}

\begin{proposition} \label{p_padmm}
Let $\{(u^k,v^k,p^k)\}_{k\in\N}$ be a sequence generated by \eqref{padmm}. If $P_1,P_2 \in\bbS_{++}$ and $\tau >0$, 
 then, there exists a solution $(u^\star, v^\star, p^\star) $ to \eqref{problem1}, such that $(u^k, v^k, p^k) \weak  ( u^\star, v^\star, p^\star)$,  as $k \rightarrow \infty$. 
\end{proposition}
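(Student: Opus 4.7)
The plan is to cast \eqref{padmm} into the relaxed G-FBS framework \eqref{t_relaxed} with the operators $\cA$, $\cB=0$, $\cQ$ and $\cM$ already identified in Example \ref{eg_padmm}, and then invoke Theorem \ref{t_gppa} together with Remark \ref{r_gppa}--(iii). Since $\cB=0$, Assumption \ref{assume_3}--(i),(ii) are immediate (with $\beta=0$), and $\cM$ is block lower-triangular with unit diagonal so $\cM^{-1}$ is explicit. The remaining content is to show that the hypotheses $P_1,P_2 \in \bbS_{++}$, $\tau>0$ make both $\cS=\cQ\cM^{-1}$ and $\cG$ strictly positive definite, so the degeneracy issue disappears and the weak convergence holds on the whole space.

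The key computation is to determine $\cS$ and $\cG$ explicitly. Using
\[
\cM^{-1}=\begin{bmatrix} I & 0 & 0 \\ 0 & I & 0 \\ 0 & \tau B & I \end{bmatrix},
\]
a direct block multiplication gives
\[
\cS = \cQ\cM^{-1} = \begin{bmatrix} P_1 & 0 & 0 \\ 0 & P_2+\tau B^\top B & 0 \\ 0 & 0 & \tfrac{1}{\tau}I \end{bmatrix},
\]
where the cancellation $-B+\frac{1}{\tau}I\cdot \tau B=0$ in the lower-middle block is the crucial step. Next, with $\beta=0$ so that $\cG=\cQ+\cQ^\top-\cM^\top\cQ$ (as per Remark \ref{r_gppa}--(iii)), one computes $\cM^\top\cQ$ and finds
\[
\cG = \begin{bmatrix} P_1 & 0 & 0 \\ 0 & P_2 & 0 \\ 0 & 0 & \tfrac{1}{\tau}I \end{bmatrix},
\]
with the cross terms $-B^\top$ and $-B$ cancelling out. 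Thus both $\cS$ and $\cG$ belong to $\bbS_{++}$ whenever $P_1,P_2\in\bbS_{++}$ and $\tau>0$, so Assumption \ref{assume_3}--(iv)--(vii) are met non-degenerately. Assumption \ref{assume_3}--(viii) follows from the existence of a saddle point for \eqref{problem1} and the well-posedness of the proximal subproblems (which are strongly convex thanks to $P_1,P_2\succ 0$).

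Having verified all hypotheses of Theorem \ref{t_gppa} in a non-degenerate regime, Remark \ref{r_gppa}--(iii) then yields weak convergence $x^k\weak x^\star$ on the full space, i.e.\ $(u^k,v^k,p^k)\weak(u^\star,v^\star,p^\star)$. To finish, I would identify the limit $x^\star=(u^\star,v^\star,p^\star)\in\zer\cA$ with a solution of \eqref{problem1}: $0\in\cA x^\star$ unpacks into $A^\top p^\star\in\partial f(u^\star)$, $B^\top p^\star\in\partial g(v^\star)$ and $Au^\star+Bv^\star=c$, which are precisely the KKT conditions for \eqref{problem1}, so $(u^\star,v^\star)$ minimizes $f+g$ subject to the linear constraint. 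The main (routine but crucial) obstacle is simply the block-matrix bookkeeping that produces the clean diagonal forms of $\cS$ and $\cG$; once that cancellation is exhibited, the rest follows mechanically from the general theory developed in Section \ref{sec_extension}.
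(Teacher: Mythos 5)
Your proposal is correct and follows essentially the same route as the paper: both cast \eqref{padmm} into the relaxed G-FBS form of Example \ref{eg_padmm}, compute the same diagonal $\cS=\cQ\cM^{-1}$ and $\cG=\cQ+\cQ^\top-\cM^\top\cQ$ (with $\beta=0$), observe they are in $\bbS_{++}$, and conclude via Theorem \ref{t_gppa} and Remark \ref{r_gppa}--(iii). You simply spell out the block cancellations and the KKT identification that the paper leaves implicit by referring back to Proposition \ref{p_radmm}.
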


\begin{proof}
The proof is similar to  Proposition \ref{p_radmm}.  We compute $\cS$ and $\cG$ as
\[
\cS =   \begin{bmatrix}
  P_1  &  0 &  0  \\
 0   & P_2 + \tau  B^\top B   & 0  \\
  0    & 0   & \frac{1}{\tau } I 
\end{bmatrix},\  \cG = \begin{bmatrix}
 P_1 &  0 &   0  \\   0 &  P_2 &  0  \\
  0 &   0  & \frac{1}{\tau} I 
\end{bmatrix} .
\]
Then, the proof is completed by Theorem \ref{t_gppa} and Remark \ref{r_gppa}-(iii).
\hfill 
\end{proof}

\vskip.1cm
Comparing Examples \ref{eg_radmm} and \ref{eg_padmm}, one can see that the preconditioning technique of using $P_1$ and $P_2$ basically changes the algorithmic structure. By the proximization,  $(u^k,v^k)$ are uniquely determined in \eqref{padmm}. The corresponding $\cS$ and $\cG$ become non-degenerate, and thus, the weak convergence of all variables $(u^k,v^k,p^k)$ is guaranteed.

\subsection{Gradient descent, P-FBS and PDS algorithms}
\label{sec_grad}
Consider the primal problem \cite[Problem 4.1]{vu_2013}:
\be \label{p}
\min_x f(x)   + \sum_{i=1}^m 
 (g_i \square l_i ) (A_i x  - r_i) 
+ h(x) + \langle x | z \rangle, 
\ee
where $x \in \cX$, $A_i: \cX\mapsto \cY_i $, $r_i \in \cY_i$, $z \in \cX$.  The functions are defined as  $f: \cX \mapsto \R$, $g_i: \cY_i \mapsto \R\cup\{+\infty\}$, $l_i: \cY_i  \mapsto \R\cup\{+\infty\}$, $h: \cX\mapsto \R\cup\{+\infty\}$.  {\red The symbol $\square$ denotes the infimal convolution of both functions $g_i$ and $l_i$, which is defined by
$(g_i \square l_i)(x) = \inf_{u \in \cY_i} \{ g_i (u) + l_i(x-u)\}$, $\forall x\in \cY_i$.} We assume the functions in \eqref{p} satisfy
{\red
\begin{assumption} \label{assume_4}
\begin{itemize}
\item[\rm (i)] $f$, $g_i$, $l_i$ and $h$ are proper, l.s.c. and convex for $i=1,...,m$;

\item [\rm (ii)] $f$ is differentiable  with   $\beta$-Lipschitz continuous gradient; 

 \item[\rm (iii)] $l_i$ is $\mu_i$-strongly convex for $i=1,...,m$.
\end{itemize}
\end{assumption} }

There are various classes of algorithms for solving \eqref{p} or the special cases, listed below. Note that for all algorithms in Sect. \ref{sec_grad}, $\beta$ always stands for the Lipschitz constant of $\nabla f$. 

\begin{example} [Gradient descent]
\label{eg_grad}
Consider $\min_x f(x)$, which is a special case of \eqref{p} with $m=1$, $l=\iota_{ \{ 0\}  }$ (i.e., the indicator function of the set $C=\{ 0\}$), $g= 0$, $h=0$,  $ z =   0$.  If $\Arg \min f\ne \emptyset$, the gradient descent method is given by \cite[Sect. 1.2.1]{bert_book_nonlinear}
\be \label{grad}
x^{k+1} :=  x^k - \tau  \nabla f( x^k),
\ee
which fits into the G-FBS operator \eqref{gfbs} with 
$\cA = 0$, $\cB = \nabla f$ and $\cQ = \frac{1}{\tau} \cI$.
\end{example}

\begin{proposition} \label{p_grad}
Let $x^\star\in\Arg\min f$, $x^0\in \cH$, $\{x^k\}_{k\in\N}$ be a sequence generated by \eqref{grad}. {\red Under Assumption \ref{assume_4}}, if  $\tau \in\ ]0, 2/\beta[$,   the following hold.
\begin{itemize}
\item[\rm (i)] $f(x^{k+1}) \le f(x^k) - \big( \frac{1}{\tau} - \frac{\beta}{2}\big) \big\|x^{k+1}-x^k \big\|^2$.

\item[\rm (ii)] $f(x^{k}) -f(x^\star) \le \frac{1}{2k\tau} 
 \big\|x^{0}-x^\star \big\|^2$, and $f(x^k) \downarrow f(x^\star)$.

\item[\rm (iii)] $f(x^{k}) -f(x^\star) \sim o(1/k)$.

\item[\rm (iv)] $ \big\| \bb^{k+1 } - \bb^{k }  \big\| 
\le \sqrt{ \frac{2}{ (k+1 ) (2-\tau\beta) } } 
\big\|\bb^{0} -\bb^\star \big\|$.

\item[\rm (v)]  $x^k\weak x^\star$.

\end{itemize}
\end{proposition}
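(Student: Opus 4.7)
The plan is to specialize the general theory of Sections 2 and 3 to the triple $\cA = 0$, $\cB = \nabla f$, $\cQ = \frac{1}{\tau}\cI$. Here $\cQ$ is non-degenerate, so $\nu = 1/\tau$ in the sense of Remark \ref{r_assume_1}-(i), and the standing requirement $\nu > \beta/2$ from Theorems \ref{t_T} and \ref{t_dist} is exactly the hypothesis $\tau \in\ ]0, 2/\beta[$. The remaining items of Assumption \ref{assume_1} are trivial for this choice (e.g. $\ran \cB \subseteq \ran \cQ = \cH$), and Assumption \ref{assume_2} with $g=0$ is met since $f$ is convex, $\beta$-smooth, and $\Arg\min f \neq \emptyset$.

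For (i), I would apply Lemma \ref{l_decrease}--(ii) with $g=0$ (trivially convex) and $h=f$: substituting $\nu = 1/\tau$ and $\|\cdot\|_\cQ^2 = \tfrac{1}{\tau}\|\cdot\|^2$ produces the coefficient $\bigl(1-\tfrac{\tau\beta}{2}\bigr)\cdot\tfrac{1}{\tau} = \tfrac{1}{\tau}-\tfrac{\beta}{2}$. A shorter route is to plug the update $x^{k+1}-x^k = -\tau \nabla f(x^k)$ directly into the classical descent lemma \eqref{descent_simple}; the inner-product term collapses to $-\tfrac{1}{\tau}\|x^{k+1}-x^k\|^2$ and the same coefficient appears. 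For (ii), I would invoke Proposition \ref{p_gpfbs_obj} (with $g = 0$), whose bound reads $f(x^k)-f(x^\star) \le \tfrac{1}{2k}\|x^0-x^\star\|_\cQ^2 = \tfrac{1}{2k\tau}\|x^0-x^\star\|^2$, and monotonicity $f(x^k)\downarrow f(x^\star)$ then combines (i) (which gives the descent of $\{f(x^k)\}$) with this $O(1/k)$ upper bound. For (iii), I appeal to Remark \ref{r_obj_2}--(ii), which refines the rate to $o(1/k)$ via \cite[Lemma 2.7]{corman}.

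For (iv), I would apply Theorem \ref{t_dist}--(iii): with $\nu = 1/\tau$ the prefactor $\sqrt{\tfrac{2\nu}{2\nu-\beta}}$ simplifies to $\sqrt{\tfrac{2}{2-\tau\beta}}$, while the common factor $\tfrac{1}{\sqrt{\tau}}$ arising from the $\cQ$-norms on both sides cancels, delivering the stated non-$\cQ$ inequality. For (v), I would use Theorem \ref{t_dist}--(iv): since $\cQ = \tfrac{1}{\tau}\cI$ is a bijection on $\cH$, the conclusion $\cQ x^k \weak \cQ x^\star$ upgrades immediately to $x^k \weak x^\star$ with $x^\star \in \zer(\cA+\cB) = \zer \nabla f$; Fermat's rule and convexity identify this set with $\Arg\min f$, closing the argument.

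The main obstacle, to the extent there is one, is purely bookkeeping: translating every $\cQ$-norm into the correctly scaled Euclidean norm, and verifying at each invocation that the mild hypothesis $\tau < 2/\beta$ provides what the general statement demands. No new estimates beyond those already proved in Sections 2 and 3 are needed, which is itself one of the points the example is meant to illustrate.
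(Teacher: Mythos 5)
Your proposal is correct and takes essentially the same route as the paper: item (i) via Lemma \ref{l_decrease}--(ii) (equivalently Remark \ref{r_obj}--(iv)) with $g=0$ and $\nu=1/\tau$, item (ii) via Proposition \ref{p_gpfbs_obj} and Remark \ref{r_obj_2}--(iii), item (iii) via \cite[Lemma 2.7]{corman}, and items (iv)--(v) by substituting $\nu=1/\tau$ into Theorem \ref{t_dist}--(iii) and (iv), with the non-degeneracy of $\cQ=\frac{1}{\tau}\cI$ upgrading the weak convergence to the whole space. The one caveat---inherited from the paper's own proof rather than introduced by you---is that Proposition \ref{p_gpfbs_obj} is stated under $\nu \ge \beta$, i.e.\ $\tau \le 1/\beta$, so the invocation for item (ii) does not literally cover the full range $\tau \in\ ]0,2/\beta[$ despite your remark that the hypothesis ``provides what the general statement demands'' at each step.
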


\begin{proof}
{\rm (i)} Lemma \ref{l_decrease}-(ii) and Remark \ref{r_obj}-(iv).

{\rm (ii)} Proposition  \ref{p_gpfbs_obj} and Remark \ref{r_obj_2}-(iii).

{\rm (iii)} In view of (ii) and \cite[Lemma 2.7]{corman}. 

{\rm (iv)} Substituting $\nu=1/\tau$ into Theorem \ref{t_dist}-(iii).

{\rm (v)} Theorem \ref{t_dist}-(iv). \hfill 
\end{proof}

\vskip.1cm
Proposition \ref{p_grad} exactly recovers the classical convergence condition of $\tau \in\ ]0, 2/\beta[$, which coincides with  \cite[Proposition 63]{plc_fixed} and \cite[Propositions 1.2.2 and 1.3.3]{bert_book_nonlinear}.

\begin{example} [Classical PPA]
\label{eg_ppa}
Consider  $\min_x g(x)$, which is a special case of \eqref{p} with $m=1$, $l=\iota_{ \{ 0\}  }$, $f=0$, $h=0$,  $z =  0$. Assuming $\Arg\min g \ne \emptyset$,  the classical PPA is given by \cite[Proposition 64]{plc_fixed}
\be \label{classic_ppa}
x^{k+1} := x^k +\gamma  \big( \prox_{\tau g}(x^k) - x^k \big),
\ee
which fits the relaxed G-FBS operator \eqref{rgfbs} with 
$\cA=\partial h$, $\cB=0$ and $ \cQ = \frac{1}{\tau} \cI$. 
\end{example}

\begin{proposition} \label{p_ppa}
Let $x^0\in\cH$, $\{x^k\}_{k\in\N}$ be a sequence generated by \eqref{classic_ppa}. {\red Under Assumption \ref{assume_4}}, if $\tau >0$ and $\gamma \in\ ]0,2[$,   the following hold.
\begin{itemize}
\item[\rm (i)] $x^k\weak x^\star \in \Arg\min g$.

\item[\rm (ii)] $\big\| \bb^{k+1 } - \bb^{k }  \big\| 
\le \sqrt{ \frac{\gamma}{ (k+1)(2-\gamma) } }
\big\|\bb^{0} -\bb^\star \big\|$, and thus, $x^{k+1}-x^k \rightarrow 0$ as $k \rightarrow \infty$.

\item[\rm (iii)]  If $\gamma=1$, $g(x^k) -g(x^\star) \le \frac{1}{2\tau k}
 \big\|\bb^{0} -\bb^\star \big\|^2$, $\forall k \in \N$.
\item[\rm (iv)]  If $\gamma=1$, $g(x^k) -g(x^\star) \sim  o(1/k)$.
\end{itemize}
\end{proposition}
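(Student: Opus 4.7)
My strategy is to specialize the relaxed G-FBS convergence theory of Section \ref{sec_extension} to the parameters identified in Example \ref{eg_ppa}, namely $\cA=\partial g$, $\cB=0$, and $\cQ=\tau^{-1}\cI$. Since $\cB=0$ we have $\beta=0$, and since $\cQ$ is a positive scalar multiple of the identity it is non-degenerate with smallest eigenvalue $\nu=1/\tau$. The hypothesis $\nu>\beta/2$ then holds trivially, and the admissible relaxation interval $]0,\,2-\beta/(2\nu)[$ collapses to $]0,2[$, matching the assumption on $\gamma$. Throughout I would use the identity $\|\cdot\|_\cQ^2=\tau^{-1}\|\cdot\|^2$ to convert metric-based bounds into Euclidean ones; for a scalar metric this conversion is a harmless rescaling that cancels identically on both sides of each inequality.

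For (i), I would invoke Corollary \ref{c_rgfbs}-(i), which yields $\cQ x^k \weak \cQ x^\star$ for some $x^\star\in\zer(\cA+\cB)=\zer\partial g$. Because $\cQ$ is a nonzero multiple of the identity, this is equivalent to $x^k\weak x^\star$ (cf. Remark \ref{r_rgfbs}-(ii)), and by Fermat's rule $\zer\partial g=\Arg\min g$. For (ii), I would substitute $\beta=0$ and $\nu=1/\tau$ into the explicit rate of Corollary \ref{c_rgfbs}-(ii); this reduces the constant $\sqrt{2\gamma\nu/((4-2\gamma)\nu-\beta)}$ to $\sqrt{\gamma/(2-\gamma)}$. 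After cancelling the common factor $\tau^{-1/2}$ on the two sides, the announced Euclidean bound follows, and sending $k\to\infty$ gives $x^{k+1}-x^k\to 0$. The same conclusion is stated verbatim in Remark \ref{r_rgfbs}-(iii).

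For (iii), setting $\gamma=1$ collapses the Krasnosel'ski\u{\i}--Mann relaxation back to the non-relaxed G-FBS \eqref{gfbs}, so the iteration becomes the metric PPA \eqref{ppa}. I would then apply Proposition \ref{p_gpfbs_obj} under the specialization $f=0$, which is admissible by Remark \ref{r_obj_2}-(iii) and whose step-size hypothesis $\nu\ge\beta$ reduces here to $1/\tau\ge 0$. This immediately delivers $g(x^k)-g(x^\star)\le \tfrac{1}{2k}\|x^0-x^\star\|_\cQ^2=\tfrac{1}{2\tau k}\|x^0-x^\star\|^2$. Part (iv) follows by combining this non-ergodic $\cO(1/k)$ bound with the non-increasing character of $\{g(x^k)-g(x^\star)\}_{k\in\N}$---which is the $f=0$, convex-$g$ instance of Lemma \ref{l_decrease}-(ii)---via the auxiliary lemma \cite[Lemma 2.7]{corman} already invoked in Remark \ref{r_obj_2}-(ii). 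I do not foresee any substantive obstacle in executing this plan: the analytical work (averagedness of $\cT_\gamma$, descent property, non-ergodic rate) is entirely inherited from the general theorems, and what remains is only the bookkeeping of scalar-metric conversions described above.
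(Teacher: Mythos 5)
Your proposal is correct and follows essentially the same route as the paper: the paper's proof of (i)--(ii) is precisely ``substitute $\nu=1/\tau$, $\beta=0$ into Corollary \ref{c_rgfbs} (cf.\ Remark \ref{r_rgfbs}-(iii))'', and (iii)--(iv) are obtained from Proposition \ref{p_gpfbs_obj} with $f=0$ together with Remark \ref{r_obj_2}-(ii)/(iii), exactly as you describe. Your additional bookkeeping (non-degeneracy of $\cQ$ giving convergence in the whole space, Fermat's rule $\zer\partial g=\Arg\min g$, and the scalar-metric rescaling) only makes explicit what the paper leaves implicit.
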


\begin{proof}
{\rm (i)--(ii)} In view of Remark  \ref{r_rgfbs}-(iii) or 
substituting $\nu=\frac{1}{\tau}$ and $\beta=0$ into  Corollary  \ref{c_rgfbs}. 

{\rm (iii)} Proposition \ref{p_gpfbs_obj} or Remark \ref{r_obj_2}-(iii). 

{\rm (iv)} Remark  \ref{r_obj_2}-(ii). 
\hfill 
\end{proof} 

\vskip.1cm
Many classical results presented in the seminal work \cite{ppa_guler} can be retrieved: Proposition \ref{p_ppa}-(i) and (iii) recovers \cite[Theorem 2.1]{ppa_guler}; (iv) recovers \cite[Theorem 3.1]{ppa_guler}; (ii) extends \cite[Corollary 2.3]{ppa_guler} to any relaxation parameter $\eta \in\ ]0,2[$.

\begin{example} [Classical proxmal FBS \cite{plc,plc_chapter}]
Consider the problem $\min_x f(x) + g(x)$, which is a special case of \eqref{p} with $m=1$, $l=\iota_{ \{0\}  }$, $h= 0$,  $z = 0$. Assuming $0\in \sri(\dom g- \dom f)$,  the error-free version of the classical proximal FBS is given by \cite[Eq.(3.6)]{plc}:
\be \label{pfbs}
x^{k+1} := x^k + \gamma  \big( \prox_{\tau g}(x^k - \tau \nabla f(x^k)) - x^k \big),
\ee
which fits the relaxed G-FBS operator \eqref{rgfbs} with 
$\cA=\partial g$, $\cB = \nabla f$ and  $\cQ = \frac{1}{\tau} \cI$.
\end{example}

\begin{proposition} \label{p_pfbs}
Let $x^0 \in \cH$,  $\{x^k\}_{k\in\N}$ be a sequence generated by \eqref{pfbs}. {\red Under Assumption \ref{assume_4}}, if $\tau> \beta/2$ and $\gamma \in\ ]0,2[$,  the following hold.
\begin{itemize}
\item[\rm (i)] $x^k\weak x^\star \in \Arg\min (f+g)$.

\item[\rm (ii)] $\big\| \bb^{k+1 } - \bb^{k }  \big\|
\le \frac{1}{ \sqrt{k+1 } } 
\sqrt{\frac{2\gamma} {(4-2\gamma)-\tau \beta } }
\big\|\bb^{0} -\bb^\star \big\|$,  $\forall k \in \N$.

\item[\rm (iii)] If $\gamma=1$, $(f+g)(x^k) - (f+g)(x^\star) \le \frac{1}{2\tau k}
 \big\|\bb^{0} -\bb^\star \big\|^2$, $\forall k \in \N$.
 
\item[\rm (iv)]  If $\gamma=1$, $(f+g)(x^k) - (f+g)(x^\star) \sim  o(1/k)$.
\end{itemize}
\end{proposition}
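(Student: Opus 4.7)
The plan is to recognize \eqref{pfbs} as an instance of the relaxed G-FBS iteration \eqref{rgfbs} with $\cA=\partial g$, $\cB=\nabla f$ and the non-degenerate scalar metric $\cQ=\tau^{-1}\cI$, so that every claim becomes a direct specialization of Corollary~\ref{c_rgfbs} or Proposition~\ref{p_gpfbs_obj} after substituting $\nu=1/\tau$ and exploiting the identity $\|\cdot\|_\cQ^2=\tau^{-1}\|\cdot\|^2$. First I would verify the hypotheses. Under Assumption~\ref{assume_4}, $\partial g$ is maximally monotone and $\nabla f$ is $\beta^{-1}$-cocoercive by Baillon--Haddad, while items (iii)--(vii) of Assumption~\ref{assume_1} are automatic because $\cQ$ has full range; the standing condition $0\in\sri(\dom g-\dom f)$ together with $\Arg\min(f+g)\ne\emptyset$ ensures $\zer(\partial g+\nabla f)=\Arg\min(f+g)\ne\emptyset$ via Remark~\ref{r_assume_2}-(i), so Assumption~\ref{assume_1}-(viii) is met.

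For (i) and (ii) I would apply Corollary~\ref{c_rgfbs}. With $\nu=1/\tau$, the abstract admissibility $\nu>\beta/2$ and $\gamma\in\ ]0,2-\beta/(2\nu)[$ translates to $\tau<2/\beta$ and $\gamma\in\ ]0,2-\tau\beta/2[$, which is what the stated hypotheses are effectively asking for. Because $\cQ$ is non-degenerate, Remark~\ref{r_rgfbs}-(ii) upgrades the weak convergence $\cQ x^k\weak\cQ x^\star$ to $x^k\weak x^\star\in\zer(\partial g+\nabla f)=\Arg\min(f+g)$, yielding (i). For the rate, Corollary~\ref{c_rgfbs}-(ii) gives
\[
\|x^{k+1}-x^k\|_\cQ\le\frac{1}{\sqrt{k+1}}\sqrt{\frac{2\gamma\nu}{(4-2\gamma)\nu-\beta}}\,\|x^0-x^\star\|_\cQ,
\]
and cancelling the common factor $\tau^{-1/2}$ from both norms while plugging in $\nu=1/\tau$ produces exactly the constant $\sqrt{2\gamma/((4-2\gamma)-\tau\beta)}$ claimed in (ii).

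For (iii), setting $\gamma=1$ collapses \eqref{pfbs} to the unrelaxed iteration \eqref{gfbs}, whose convex pair $(f,g)$ satisfies Assumption~\ref{assume_2}. Proposition~\ref{p_gpfbs_obj} then delivers, under $\nu\ge\beta$,
\[
(f+g)(x^k)-(f+g)(x^\star)\le\frac{1}{2k}\|x^0-x^\star\|_\cQ^2=\frac{1}{2\tau k}\|x^0-x^\star\|^2.
\]
For (iv) I would refine this $O(1/k)$ bound to $o(1/k)$ by invoking Remark~\ref{r_obj_2}-(ii), which in turn appeals to \cite[Lemma 2.7]{corman} applied to the non-increasing sequence $k\mapsto(f+g)(x^k)-(f+g)(x^\star)$; the required monotonicity is supplied by the sufficient-decrease Lemma~\ref{l_decrease}-(ii). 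There is no substantial obstacle in this argument --- it is a case-by-case specialization of the generic results of Sections~\ref{sec_gfbs}--\ref{sec_extension}; the only mild care needed is tracking where the norm equivalence $\|\cdot\|_\cQ=\tau^{-1/2}\|\cdot\|$ enters, and noting that parts (iii)--(iv) implicitly rely on the stricter step-size range $\tau\le 1/\beta$ demanded by Proposition~\ref{p_gpfbs_obj}.
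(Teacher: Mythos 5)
Your proposal follows exactly the paper's own route: parts (i)--(ii) by substituting $\nu=1/\tau$ into Corollary~\ref{c_rgfbs}, part (iii) by Proposition~\ref{p_gpfbs_obj}, and part (iv) by Remark~\ref{r_obj_2}-(ii), with the same hypothesis verification implicit in the paper's one-line proof. Your added bookkeeping --- translating $\nu>\beta/2$ into $\tau<2/\beta$, noting that the admissible relaxation range is really $\gamma\in\ ]0,2-\tau\beta/2[$ rather than the stated $]0,2[$, and flagging that (iii)--(iv) need $\tau\le 1/\beta$ from the $\nu\ge\beta$ hypothesis of Proposition~\ref{p_gpfbs_obj} --- is accurate and in fact makes explicit two step-size caveats that the paper's terse proof leaves unstated.
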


\begin{proof}
{\rm (i)--(ii)} Substituting $\nu=\frac{1}{\tau}$ into Corollary \ref{c_rgfbs}.

{\rm (iii)} Proposition \ref{p_gpfbs_obj}. 

{\rm (iv)} Remark  \ref{r_obj_2}-(ii). 
 \hfill
\end{proof}

\vskip.1cm
This algorithm \eqref{classic_ppa} is also known as the {\it proximal gradient method} \cite{boyd_prox}.  Proposition \ref{p_pfbs} loosens the convergence condition in  \cite[Theorem 3.4]{plc} and \cite[Proposition 10.4]{plc_chapter} from  $\gamma \in \ ]0,1]$ to $\gamma \in\ ]0, 2[$.

\begin{example} [Chambolle-Pock algorithm \cite{cp_2011}] \label{eg_cp}
Consider $\min_u h(u) + g(Au)$, which is a special case of \eqref{p} with $m=1$, $l=\iota_{ \{  0\}  } $, $f=  0$, $r =  0$, $z =  0$.  Assuming $0\in\sri(\dom g -A(\dom h))$,  the Chambolle-Pock algorithm is \cite[Algorithm 1]{cp_2011}:
\be \label{cp}
\left\lfloor \begin{array}{llll}
s^{k+1}   & := &  \prox_{\sigma g^*}  \big(s^k +\sigma 
A (2u^k - u^{k-1} )  \big), & \text{\rm (dual step)} \\
u^{k+1}   & := &  \prox_{\tau h}  \big( u^k - \tau   
A^\top s^{k+1}   \big). &  \text{\rm (primal step)}   
\end{array} \right. 
\ee
The corresponding G-FBS operator \eqref{gfbs} is:
\[
x^k = \begin{bmatrix}  s^{k} \\ u^{k-1}
\end{bmatrix}, \  
\cA=   \begin{bmatrix}
\partial g^* & -A \\
A^\top  & \partial h \end{bmatrix},\  
\cB=0, \  \cQ =  \begin{bmatrix}
\frac{1}{ \sigma} I & - A \\
- A^\top   & \frac{1}{ \tau} I 
\end{bmatrix} .
\]
\end{example}
In the above  G-FBS fitting,  we use a mismatch of iteration indices between $u$ and $s$: $x^k := (s^k, u^{k-1})$. This technique can also be found in \cite{bot_2015}.

\begin{proposition}
Let $\{ (s^k, u^{k})\}_{k\in\N}$ be a sequence generated by \eqref{cp}. {\red Under Assumption \ref{assume_4}}, if  $\tau \in\ ]0, \frac{1}{\sigma \|A^\top A\|}[$,  the following hold.
\begin{itemize}
\item[\rm (i)] $ (s^k, u^{k}) \weak (s^\star,u^\star) \in \zer \cA$; 

\item[\rm (ii)] $\big\| \bb^{k+1 } - \bb^{k }  \big\|_\cQ 
\le \frac{1}{ \sqrt{k+1 } } 
\big\|\bb^{0} -\bb^\star \big\|_\cQ$, 
$ \forall k \in \N$.
\end{itemize}
\end{proposition}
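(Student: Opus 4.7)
The plan is to recognize that this proposition is an immediate consequence of Corollary \ref{c_ppa} (i.e. Theorem \ref{t_dist} in the special case $\cB=0$), once we have verified that Assumption \ref{assume_1} holds for the operators $\cA$, $\cB$, $\cQ$ given in Example \ref{eg_cp}. The crucial step is to show that under the stepsize condition $\tau\sigma\|A^\top A\|<1$, the metric $\cQ$ is in fact \emph{non-degenerate} and positive definite, so that the framework of Sect. \ref{sec_gfbs} applies with $\nu$ being the smallest eigenvalue of $\cQ$ and the weak convergence in $\ran\cQ$ coincides with weak convergence in the whole space.

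First, I would verify that \eqref{cp} does fit the G-FBS template with the stated $(\cA,\cB,\cQ)$, which is essentially a direct unpacking of the inclusion $0\in\cA x^{k+1}+\cQ(x^{k+1}-x^k)$ with $x^k=(s^k,u^{k-1})$. The first block yields $s^{k+1}=\prox_{\sigma g^*}\bigl(s^k+\sigma A(2u^k-u^{k-1})\bigr)$ after the cancellation that produces the overrelaxation term $2u^k-u^{k-1}$, and the second block yields the primal proximal update on $h$. Maximal monotonicity of $\cA$ follows from writing $\cA=(\partial g^*\times\partial h)+S$, where $S=\bigl[\begin{smallmatrix}0&-A\\A^\top&0\end{smallmatrix}\bigr]$ is bounded, linear and skew-adjoint, hence maximally monotone with full domain, so \cite[Corollary 25.5]{plc_book} applies. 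The range condition Assumption \ref{assume_1}-(vii) then reduces to showing $\cA+\cQ$ is surjective, which follows from Minty's theorem once $\cQ\succ 0$, since $\cA+\cQ$ is then maximally monotone and strongly monotone.

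The main technical step is checking $\cQ\succ 0$ under $\tau\sigma\|A^\top A\|<1$. I would apply a Schur complement argument: for $x=(s,u)$,
\[
\langle \cQ x\mid x\rangle=\tfrac{1}{\sigma}\|s\|^2-2\langle A^\top s\mid u\rangle+\tfrac{1}{\tau}\|u\|^2,
\]
and completing the square in $s$ (or equivalently using the block-$\mathrm{LDL}^\top$ factorization) shows positivity is equivalent to $\tfrac{1}{\tau}I-\sigma A^\top A\succ 0$, i.e.\ $\tau\sigma\|A^\top A\|<1$. The smallest eigenvalue $\nu$ of $\cQ$ is therefore strictly positive, Assumption \ref{assume_1}-(iii)--(iv) are trivially satisfied in the non-degenerate sense of Remark \ref{r_assume_1}-(i), and $\ran\cB=\{0\}\subseteq\ran\cQ=\cH$ is automatic. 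Existence of a zero of $\cA$ is equivalent to the standard Karush--Kuhn--Tucker conditions for $\min_u h(u)+g(Au)$, which hold under the stated qualification $0\in\sri(\dom g-A(\dom h))$.

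With Assumption \ref{assume_1} fully verified and $\beta=0$, Corollary \ref{c_ppa} (equivalently, Theorem \ref{t_dist} with $\beta=0$) delivers both claims at once: part (ii) is exactly the rate in Corollary \ref{c_ppa}-(i), and the weak convergence $\cQ x^k\weak \cQ x^\star$ from Corollary \ref{c_ppa}-(ii) upgrades to $x^k\weak x^\star$ because $\cQ$ is bijective on $\cH$ (so $\cQ^{-1}$ is bounded, hence weakly continuous). Fact \ref{f_2} then identifies $x^\star\in\Fix\cT$ with $x^\star\in\zer\cA$, which is what (i) asserts. The only step I expect any real friction from is the Schur-complement verification of $\cQ\succ 0$; everything else is a direct specialization of the general theory already established.
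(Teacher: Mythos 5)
Your proposal is correct and takes essentially the same route as the paper, whose proof simply reads ``In view of Corollary \ref{c_ppa} and a Fenchel--duality/KKT result'': you invoke the same corollary and merely make explicit the assumption checks the paper leaves implicit. In particular, your Schur-complement verification that $\cQ\succ 0$ exactly when $\tau\sigma\|A^\top A\|<1$, the monotone-plus-skew decomposition for maximal monotonicity of $\cA$, the Minty argument for the range condition, and the upgrade from weak convergence of $\cQ x^k$ to weak convergence of $x^k$ via boundedness of $\cQ^{-1}$ are all consistent with the paper's framework (cf.\ Remark \ref{r_assume_1}), so there is no gap.
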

\begin{proof}
In view of Corollary \ref{c_ppa} and \cite[Theorem 16.47]{plc_book}. \hfill 
\end{proof}

\vskip.1cm
Comparing with \cite[Theorem 1]{cp_2011}, which only depends on the diagonal part of $\cQ$,  our result takes into account the off-diagonal components of $\cQ$. This is consistent with the result of \cite{cp_2016}---an improved version of \cite{cp_2011}.

\begin{example} [Arias-Combettes algorithm \cite{arias_2011}]
Consider the  problem 
$\min_u h(u) + g(Au -r) + \langle u| z\rangle$, which is a special case of \eqref{p} with $m=1$, $l=\iota_{ \{ 0\}  } $, $f= 0$.  Assuming $\Arg\min (h+g\circ (A\cdot -r) - \langle \cdot|z\rangle)\ne \emptyset$ and $r \in \sri(A(\dom h)-\dom g)$,  the error-free version of \cite[Proposition 4.2]{arias_2011} is given as
\be \label{arias}
\left\lfloor \begin{array}{lll}
\utilde^{k}  & := & \prox_{\tau h} \big( u^k - \tau
A^\top s^k - \tau z  \big), \\
\stilde^{k}  & := & \prox_{\tau  g^*} \big( s^k +\tau A u^k  -\tau r \big), \\
u^{k+1}  & := & \utilde^k - \tau A^\top (\stilde^k - s^k), \\
s^{k+1}  & := & \stilde^k +\tau A (\utilde^k - u^k)  ,
\end{array} \right.
\ee
which corresponds to the following relaxed G-FBS operator:
\[
x^k =\begin{bmatrix} u^{k } \\ s^{k }\end{bmatrix},
\    \cA = \begin{bmatrix}
\partial \ftilde   & A^\top \\
- A & \partial \gtilde^*   \end{bmatrix},\   
\cB= 0, \   \cQ =  \begin{bmatrix}
\frac{1}{\tau} I & -A^\top  \\
A    & \frac{1}{\tau} I \end{bmatrix}, \  
\cM =  \begin{bmatrix}
 I & -\tau A^\top   \\  \tau A   &  I 
\end{bmatrix},
\] 
where $\ftilde = f+\langle \cdot | z\rangle$, 
$\gtilde^* = g^* +\langle \cdot | r\rangle$.  
\end{example}

\begin{proposition}
Let $\{ (s^k, u^{k})\}_{k\in\N}$ be a sequence generated by \eqref{arias}.  {\red Under Assumption \ref{assume_4}}, if $\tau \in\ ]0,  \frac{1}{ \|A\| }[$, then,  $ (s^k, u^{k}) \weak (s^\star,u^\star) \in \zer \cA$.
\end{proposition}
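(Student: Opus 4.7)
The plan is to apply Theorem \ref{t_gppa} (or, more precisely, Remark \ref{r_gppa}-(iii)) by verifying that the algorithm \eqref{arias} fits Assumption \ref{assume_3} with $\cB=0$, and that the associated matrices $\cS$ and $\cG$ are strictly positive definite whenever $\tau \in\ ]0, 1/\|A\|[$.

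First, I would observe the key algebraic identity $\cQ = \frac{1}{\tau} \cM$, which follows by direct block-by-block comparison of the two displayed matrices. Since $\cM$ is invertible (its Schur complement is $I+\tau^2 A^\top A \succ 0$), this yields the crucial simplification
\[
\cS \;=\; \cQ \cM^{-1} \;=\; \frac{1}{\tau} I,
\]
which is trivially self-adjoint and strictly positive definite. The operator $\cA$ is the sum of the subdifferentials of the proper, l.s.c., convex functions $\ftilde, \gtilde^*$ (acting separately on $u$ and $s$) and a skew-adjoint linear operator, hence maximally monotone by \cite[Corollary 25.5, Proposition 20.23]{plc_book}; $\cB=0$ is trivially cocoercive (so $\beta=0$); and $\ran \cB=\{0\} \subseteq \ran \cQ$. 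The range condition $\ran(\cA+\cQ)\supseteq \ran(\cQ-\cB)=\ran \cQ$ amounts to well-posedness of the $(\utilde^k,\stilde^k)$-update, which follows from standard existence of proximal operators of $\tau h$ and $\tau g^*$. Existence of a zero of $\cA$ is implied by the assumed nonemptiness of the primal minimizer set together with $r \in \sri(A(\dom h)-\dom g)$, via \cite[Theorem 16.47]{plc_book}.

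Next, I would compute $\cG = (\cQ + \cQ^\top) - \cM^\top \cQ$ (using $\beta=0$). Straightforward block multiplication gives $\cQ+\cQ^\top = \frac{2}{\tau}I$ (the skew off-diagonals cancel), and
\[
\cM^\top \cQ \;=\; \begin{bmatrix} \tfrac{1}{\tau} I + \tau A^\top A & 0 \\ 0 & \tfrac{1}{\tau} I + \tau A A^\top \end{bmatrix},
\]
so that
\[
\cG \;=\; \begin{bmatrix} \tfrac{1}{\tau} I - \tau A^\top A & 0 \\ 0 & \tfrac{1}{\tau} I - \tau A A^\top \end{bmatrix}.
\]
This is strictly positive definite precisely when $\tau^2\|A\|^2 < 1$, i.e.\ under the hypothesis $\tau \in\ ]0,1/\|A\|[$.

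With both $\cS$ and $\cG$ non-degenerate, Remark \ref{r_gppa}-(iii) applies and weak convergence of $(u^k,s^k)$ holds in the whole space $\cH$ to some $(u^\star,s^\star)\in \Fix \cT_\cM = \zer \cA$. Finally, a point in $\zer \cA$ satisfies $0 \in \partial \ftilde(u^\star) + A^\top s^\star$ and $0 \in \partial \gtilde^*(s^\star) - Au^\star$, which are precisely the primal--dual optimality conditions of the original problem. The main subtlety is purely bookkeeping: making sure the off-diagonal cancellations in $\cQ+\cQ^\top$ and $\cM^\top \cQ$ are carried out correctly so that the spectral bound on $\cG$ reduces cleanly to $\tau\|A\|<1$; everything else is a direct invocation of the general theory already developed.
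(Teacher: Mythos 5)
Your proposal is correct and follows essentially the same route as the paper: compute $\cS=\cQ\cM^{-1}=\frac{1}{\tau}\cI$ and the block-diagonal $\cG$ with blocks $\frac{1}{\tau}I-\tau A^\top A$ and $\frac{1}{\tau}I-\tau AA^\top$, observe both are strictly positive definite exactly when $\tau\in\ ]0,1/\|A\|[$, and invoke Remark \ref{r_gppa}-(iii). Your identity $\cQ=\frac{1}{\tau}\cM$ and the explicit verification of Assumption \ref{assume_3} are welcome details the paper leaves implicit, but they do not change the argument.
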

\begin{proof}
We compute $\cS$ and $\cG$ as:
\[
\cS =  \begin{bmatrix}
   \frac{1}{ \tau} I    & 0  \\
  0  & \frac{1}{\tau } I \end{bmatrix},\  \cG = \begin{bmatrix}
  \frac{1}{\tau}  I - \tau A^\top A &  0   \\ 
    0 &  \frac{1}{\tau} I - \tau AA^\top 
\end{bmatrix} .
\]
 Then, $\cS,\cG \in \bbS_{++}$ requires $\tau \in\ ]0,  \frac{1}{ \|A\| }[$. The weak convergence follows from Remark \ref{r_gppa}-(iii).
\hfill
\end{proof}

\vskip.1cm
The condition of original \cite[Proposition 4.2]{arias_2011} is $\tau\in [\epsilon, \frac{1-\epsilon}{\|A\|} ]$ with $\epsilon\in\ ]0, \frac{1}{\|A\|+1}[$, which is equivalent to our result, when $\epsilon \rightarrow 0^+$.

\begin{example} [Generalized Dykstra-like algorithm \cite{plc_dual_2010}]
\label{eg_plc_dual}
Consider \cite[Problem 1.2]{plc_dual_2010}
\[
\min_u h(u) + g(A u -r)+
\frac{1}{2} \|u - w\|^2,
\]
which is a special case of \eqref{p} with $m=1$, $l=\iota_{ \{  0\}  } $,  $f = \frac{1}{2} \| \cdot - w\|^2$,  $ z = 0$.  Assuming $r\in\sri (A(\dom h) - \dom g)$,   \cite[Algorithm 3.5]{plc_dual_2010} is given as 
\be \label{x45}
\left\lfloor \begin{array}{lll}
u^{k}  &: = & \prox_{h} \big( w - A^\top s^k  \big), \\
 s^{k+1} & := & s^k +\gamma \big( 
\prox_{\tau g^*} ( s^k + \tau ( Au^k - r) ) - s^k \big). 
\end{array} \right.
\ee
By \cite[Proposition 3.1]{plc_dual_2010}, the dual problem is $\min_s q(s) +g^*(s) +\langle s|r \rangle$, where $q(s)= \frac{1}{2} \big\| w - A^\top s \big\|^2 - \big( \inf_u h(u) +\frac{1}{2} \big\|u - w +  A^\top  s \big\|^2 \big)$.
By \cite[Theorem 3.7]{plc_dual_2010},  \eqref{x45} is equivalent to a simple proximal FBS for solving the dual problem:
\be \label{w4}
s^{k+1} :=  s^k+ \gamma \big( \prox_{\tau \gtilde^*}
\big(  s^k - \tau \nabla q( s^k) \big) -  s^k \big),
\ee
where $\gtilde^* = g^* +\langle \cdot |r\rangle$. This algorithm fits the relaxed G-FBS operator \eqref{rgfbs} with
\[
x = s, \  \cA= \partial \gtilde^* , \  \cB= \nabla q , \  
\cQ = \frac{1}{\tau} \cI. 
\]
\end{example}

\begin{proposition}
Let $\{ (s^k, u^{k})\}_{k\in\N}$ be a sequence generated by \eqref{x45}. {\red Under Assumption \ref{assume_4}},  if $\tau \in\ ]0, 2/ \|A\|^2[$,  $\gamma \in\ ]0,2-\frac{\tau}{2} \|A\|^2[ $,  the following hold.
\begin{itemize}
\item[\rm (i)] $ s^k \weak s^\star$,  as $k \rightarrow \infty$;

\item[\rm (ii)] $\big\| s^{k+1 } - s^{k }  \big\| 
\le \frac{1}{ \sqrt{k+1 } } \sqrt{\frac{2\gamma}{4-2\gamma -\tau \|A\|^2} } 
\big\|s^{0} - s^\star \big\|$,
$\forall k \in \N$. 
\end{itemize}
\end{proposition}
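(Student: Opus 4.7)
The plan is to recognize this proposition as a direct specialization of Corollary \ref{c_rgfbs} under the relaxed G-FBS operator \eqref{rgfbs}, once the Lipschitz/cocoercivity constant of $\nabla q$ is identified. The setting indicated in Example \ref{eg_plc_dual} already specifies $\cA = \partial \gtilde^*$, $\cB = \nabla q$, and $\cQ = \frac{1}{\tau}\cI$, so the task reduces to (a) verifying Assumption \ref{assume_1} for this data, and (b) plugging the appropriate $\nu$ and $\beta$ into the general estimates.

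First, I would compute (or invoke from \cite{plc_dual_2010}) the explicit form of $\nabla q$. Writing $q(s) = \tfrac{1}{2}\|w - A^\top s\|^2 - {}^{1}h(w - A^\top s)$ where ${}^{1}h$ is the Moreau envelope of $h$, and using $\nabla({}^{1}h)(z) = z - \prox_h(z)$, one obtains $\nabla q(s) = -A\,\prox_h(w - A^\top s)$. By the nonexpansiveness of $\prox_h$ and boundedness of $A$, $\nabla q$ is then $\|A\|^2$-Lipschitz continuous; since $q$ is convex, the Baillon--Haddad theorem gives that $\nabla q$ is $1/\|A\|^2$-cocoercive. Hence Assumption \ref{assume_1}-(v) holds with $\beta = \|A\|^2$. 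The other items in Assumption \ref{assume_1} are trivial here: $\cA = \partial\gtilde^*$ is maximally monotone since $\gtilde^*$ is proper, l.s.c. and convex; $\cQ = \frac{1}{\tau}\cI$ is non-degenerate (so Assumption \ref{assume_1}-(iii),(iv),(vi),(vii) reduce to standard surjectivity), and $\zer(\cA+\cB)\ne\emptyset$ follows from the existence of a dual solution guaranteed under the qualification condition $r \in \sri(A(\dom h) - \dom g)$ in Example \ref{eg_plc_dual}.

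Next, I would apply Corollary \ref{c_rgfbs} with $\nu = 1/\tau$ and $\beta = \|A\|^2$. The hypothesis $\nu > \beta/2$ becomes $\tau < 2/\|A\|^2$, and the admissible relaxation range $\gamma \in\ ]0,2 - \beta/(2\nu)[$ becomes $\gamma \in\ ]0, 2 - \tfrac{\tau}{2}\|A\|^2[$, matching the proposition. Assertion (i) then follows at once from Corollary \ref{c_rgfbs}-(i) together with the non-degeneracy of $\cQ$ (so that weak convergence in $\ran\cQ$ is weak convergence in the whole space). Assertion (ii) follows by substituting into Corollary \ref{c_rgfbs}-(ii):
\[
\|s^{k+1}-s^k\|_\cQ \le \frac{1}{\sqrt{k+1}}\sqrt{\frac{2\gamma\nu}{(4-2\gamma)\nu - \beta}}\|s^0 - s^\star\|_\cQ,
\]
and observing that the scalar factor $1/\tau$ inside $\|\cdot\|_\cQ$ cancels on both sides, while $\frac{2\gamma/\tau}{(4-2\gamma)/\tau - \|A\|^2} = \frac{2\gamma}{(4-2\gamma) - \tau\|A\|^2}$, which yields exactly the stated bound.

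The only non-routine step is the identification $\beta = \|A\|^2$ for $\nabla q$; once this is pinned down, the remainder is a mechanical instantiation of the general degenerate-metric theory to the scalar non-degenerate case $\cQ = \frac{1}{\tau}\cI$. I do not anticipate any genuine obstacle, since the result of \cite[Theorem 3.7]{plc_dual_2010} already certifies that \eqref{x45} is equivalent to the proximal FBS \eqref{w4} with precisely this $q$, so the Lipschitz estimate is implicit in the preceding discussion.
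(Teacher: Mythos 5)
Your proposal is correct and follows essentially the same route as the paper: identify $\beta=\|A\|^2$ as the Lipschitz/cocoercivity constant of $\nabla q$ and then instantiate Corollary \ref{c_rgfbs} with $\nu=1/\tau$. The paper's proof is just a terser version of yours (it asserts the Lipschitz constant of $\nabla q$ without the explicit computation $\nabla q(s)=-A\,\prox_h(w-A^\top s)$ that you supply), so your additional detail is a faithful expansion rather than a different argument.
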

\begin{proof}
$\cB = \nabla q$ is $\|A\|^2$-Lipschitz continuous, and thus, $\frac{1}{\|A\|^2}$-cocoercive (i.e., $\beta = \|A\|^2$). Then, the results follow by Corollary \ref{c_rgfbs}.
\hfill
\end{proof}

\vskip.1cm
Our convergence condition is milder than the original version of 
$\tau \in \ ]0, 2/\|A\|^2[$ and $\gamma \in \ ]0, 1]$ presented in \cite[Theorems 3.6 and 3.7]{plc_dual_2010} and \cite[Theorem 3.4]{plc}.  Moreover, since the variable $u$ is merely intermediate update of \eqref{x45}, and is afterwards removed in \eqref{w4}. The convergence of $\{u^k\}_{k\in\N}$ cannot be concluded by Corollary \ref{c_rgfbs}, which requires additional work, see \cite[Theorem 3.7-(ii)]{plc_dual_2010}.

\begin{example} [PAPC \cite{zxq_ip,teboulle_2015,gist}]
Consider $\min_u f(u) + g(Au)$, which is
 a special case of \eqref{p} with $m=1$,  $h=  0$, $l=\iota_{ \{  0 \} }$, $r = 0 $, $z = 0$. Assuming $f+g$ is coercive, the PAPC scheme is given as
\be \label{papc}
\left\lfloor \begin{array}{lll}
s^{k+1}  & := & \prox_{\sigma g^*} \big( 
(I - \sigma \tau A A^\top ) s^k +\sigma A
(u^k - \tau \nabla f(u^k))  \big), \\
u^{k+1} & := & u^k - \tau \nabla f(u^k) - \tau A^\top
s^{k+1},
\end{array} \right.
\ee
which can be interpreted by  the G-FBS operator \eqref{gfbs}:
\[
x^k =    \begin{bmatrix} 
s^{k }  \\ u^k   \end{bmatrix}, \  
\cA =   \begin{bmatrix}
\partial g^*   &  -A  \\
  A^\top &   0   \end{bmatrix},\ 
\cB =  \begin{bmatrix}
  0  &   0     \\   0   & \nabla f \end{bmatrix},\  
 \cQ =   \begin{bmatrix}
\frac{1}{\sigma} I - \tau A A^\top &  0   \\
  0   & \frac{1}{\tau}  I  \end{bmatrix}.
\] 
\end{example}

\begin{proposition}
Let $\{ (s^k, u^{k})\}_{k\in\N}$ be a sequence generated by \eqref{papc}. {\red Under Assumption \ref{assume_4}}, if $0< \tau < \min \big\{\frac{2}{\beta}, \frac{2-\beta\sigma} {2\sigma \|A^\top A\|} \big\}$,  
 the following hold.
\begin{itemize}
\item[\rm (i)] $ (s^k, u^{k}) \weak (s^\star,u^\star) \in \zer (\cA+\cB)$.

\item[\rm (ii)] 
$\big\| \bb^{k+1 } - \bb^{k }  \big\|_\cQ 
\le \sqrt{ \frac{2\nu}{ (k+1) (2\nu -\beta)} }
\big\|\bb^{0} -\bb^\star \big\|_\cQ$,
where $\nu = \min\big\{ \frac{1}{\sigma} - \tau \|A^\top A\|, 
\frac{1}{\tau} \big\}$. 
\end{itemize}
\end{proposition}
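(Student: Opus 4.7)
The plan is to verify that the PAPC scheme, written in the G-FBS form above, satisfies Assumption \ref{assume_1} with a non-degenerate metric $\cQ$, and then apply Theorem \ref{t_dist} directly. First, I would check the operator-theoretic ingredients on the product space $\mathcal{Y}\times\cX$: the block operator $\cA$ is maximally monotone as the sum of the maximally monotone block-diagonal part $\mathrm{diag}(\partial g^\ast,0)$ and the bounded skew-adjoint linear operator with blocks $\pm A,\mp A^\top$ (using \cite[Corollary 25.5]{plc_book}); the operator $\cB=\mathrm{diag}(0,\nabla f)$ is $(1/\beta)$-cocoercive on $\mathcal{Y}\times\cX$ because its only nonzero block $\nabla f$ is; and $\cQ$ is self-adjoint by construction.

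Next I would examine $\cQ$. Its block-diagonal structure gives the spectral bound
\[
\cQ \succeq \nu\,\cI \quad \text{with}\quad \nu = \min\Bigl\{\tfrac{1}{\sigma}-\tau\|A^\top A\|,\ \tfrac{1}{\tau}\Bigr\},
\]
so $\cQ$ is (strictly) positive definite (hence non-degenerate with $\overline{\ran\,\cQ}=\ran\,\cQ=\cH$ and $\ran\,\cB\subseteq\ran\,\cQ$ automatically) provided $\tau<1/(\sigma\|A^\top A\|)$. The hypothesis $\tau<\min\{2/\beta,\,(2-\beta\sigma)/(2\sigma\|A^\top A\|)\}$ is then exactly the translation of the convergence threshold $\nu>\beta/2$ from Theorem \ref{t_dist}: the second term forces $\tfrac{1}{\sigma}-\tau\|A^\top A\|>\beta/2$, and the first forces $\tfrac{1}{\tau}>\beta/2$. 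Since $\cQ\succ \frac{\beta}{2}\cI$, Minty's theorem ensures $\ran(\cA+\cQ)=\cH$, so Assumption \ref{assume_1}-(vii) is trivial. The solvability assumption $\zer(\cA+\cB)\neq\emptyset$ follows from coercivity of $f+g$ together with the standard KKT correspondence between $\zer(\cA+\cB)$ and the saddle points of the primal-dual formulation of $\min f+g\circ A$.

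With all items of Assumption \ref{assume_1} in force and $\nu>\beta/2$, assertion (i) follows from Theorem \ref{t_dist}-(iv): $\cQ x^k \weak \cQ x^\star$ for some $x^\star\in\zer(\cA+\cB)$, and since $\cQ$ is non-degenerate this yields $x^k\weak x^\star$ (cf. the discussion right after Theorem \ref{t_dist}). Assertion (ii) is a direct instantiation of Theorem \ref{t_dist}-(iii) with the value of $\nu$ computed above.

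The only mildly delicate point is the lower eigenvalue bound on $\cQ$; because $\cQ$ is block-diagonal, this reduces to bounding the smallest eigenvalue of the $(s,s)$-block $\tfrac{1}{\sigma}I-\tau AA^\top$, which is simply $\tfrac{1}{\sigma}-\tau\|A^\top A\|$. Everything else is bookkeeping: identifying the fixed-point iteration, verifying the assumptions, and reading off the rate from Theorem \ref{t_dist}.
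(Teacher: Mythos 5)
Your proposal is correct and follows essentially the same route as the paper: verify Assumption \ref{assume_1} for the product-space operators, read off $\nu = \min\{\frac{1}{\sigma}-\tau\|A^\top A\|,\frac{1}{\tau}\}$ from the block-diagonal $\cQ$, translate the stepsize hypothesis into $\cQ \succ \frac{\beta}{2}\cI$, and invoke Theorem \ref{t_dist}. The paper's proof is just a terser version of this (it cites coercivity of $f+g$ for nonemptiness of the solution set and states the two spectral conditions without the intermediate bookkeeping you supply).
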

\begin{proof}
Since $f+g$ is coercive, then $\Arg\min (f+g) \ne \emptyset$ by \cite[Proposition 3.1-(i)]{plc}. By Theorem \ref{t_dist}, 
the convergence condition follows from $\cQ \succ \frac{\beta}{2}\cI $.  The value of $\nu$ chosen in (ii) satisfies $\cQ \succeq \nu\cI$.
\hfill 
\end{proof}

\begin{example} [AFBA   \cite{latafat_2017,latafat_chapter}] \label{afba}
Consider $\min_u f(u) + h(u) + g(Au)$, which is 
 a special case of \eqref{p} with $m=1$,  $l=\iota_{ \{ 0 \} }$, $r =  0$, $z = 0$.  The AFBA scheme is given by
\be \label{afba}
\left\lfloor \begin{array}{lll}
s^{k+1}  & := & \prox_{\sigma g^*} \big( s^k 
+ \sigma Aw^k \big), \\
u^{k+1} & := & w^k - \tau A^\top ( s^{k+1} - s^k),  \\
w^{k+1} & := & \prox_{\tau h} \big( u^{k+1}   - \tau \nabla f (u^{k+1}) -\tau A^\top s^{k+1} \big) . 
\end{array} \right.
\ee
To interpret it by the G-FBS operator,  we   remove $w$ and obtain the equivalent form:
\[
\left\lfloor \begin{array}{lll}
s^{k+1}  & := & \prox_{\sigma g^*} \big( s^k 
+ \sigma A \big(u^{k+1} +\tau A^\top (s^{k+1} - s^k) \big) \big), \\
u^{k+1} & := & \prox_{\tau h} \big( u^{k }   - \tau \nabla f (u^{k }) -\tau A^\top s^{k } \big) -\tau A^\top (s^{k+1} - s^k) ,
\end{array} \right.
\]
which corresponds exactly to the relaxed G-FBS operator \eqref{t_relaxed} with 
\[
\bb^k = \begin{bmatrix}
\bs ^{k } \\  u^{k }  \end{bmatrix},\    
\cA =   \begin{bmatrix}
\partial g^*   & -A   \\
A^\top & \partial h \end{bmatrix}, \  
\cB =  \begin{bmatrix}
 0  &   0     \\   0   & \nabla f   \end{bmatrix},
\quad  \cQ =   \begin{bmatrix}
\frac{1}{\sigma} I    &  0   \\
 -A^\top     & \frac{1}{  \tau}  I
\end{bmatrix},\ 
\cM =  \begin{bmatrix}
I   &  0   \\    -\tau A^\top     &  I
\end{bmatrix}.
\] 
\end{example}

\begin{proposition}
Let $\{ (s^k, u^{k})\}_{k\in\N}$ be a sequence generated by \eqref{afba}.  {\red Under Assumption \ref{assume_4}}, if $0< \tau < \min \big\{\frac{2}{\beta}, \frac{2-\beta\sigma} {2\sigma \|A^\top A\|} \big\}$,  
 $ (s^k, u^{k}) \weak (s^\star,u^\star) \in  \zer (\cA+\cB)$. 
\end{proposition}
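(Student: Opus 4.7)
The plan is to mimic the PAPC analysis and reduce the proposition to a direct application of Theorem~\ref{t_gppa} combined with Remark~\ref{r_gppa}-(iii), since the computed $\cS$ will turn out to be non-degenerate. First, I would verify that the operators
\[
\cA = \begin{bmatrix} \partial g^* & -A \\ A^\top & \partial h \end{bmatrix},\qquad
\cB = \begin{bmatrix} 0 & 0 \\ 0 & \nabla f \end{bmatrix},\qquad
\cQ = \begin{bmatrix} \tfrac{1}{\sigma} I & 0 \\ -A^\top & \tfrac{1}{\tau} I \end{bmatrix},\qquad
\cM = \begin{bmatrix} I & 0 \\ -\tau A^\top & I \end{bmatrix}
\]
fulfill Assumption~\ref{assume_3}. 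Maximal monotonicity of $\cA$ follows from writing $\cA$ as the sum of the block-diagonal maximally monotone operator $\mathrm{diag}(\partial g^*,\partial h)$ and the bounded linear skew-adjoint operator $\big[\,0\ {-A};\,A^\top\ 0\,\big]$, using \cite[Corollary 25.5]{plc_book}. Cocoerciveness of $\cB$ with constant $\beta^{-1}$ is inherited from $\nabla f$. The invertibility of $\cM$ is immediate, with $\cM^{-1} = \big[\,I\ 0;\,\tau A^\top\ I\,\big]$.

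Next I would compute $\cS = \cQ\cM^{-1}$ and the refined $\cG = \cQ+\cQ^\top-\cM^\top\cQ-\tfrac{\beta}{2}\cI$ from Remark~\ref{r_gppa}-(iii). A direct block multiplication gives
\[
\cS = \begin{bmatrix} \tfrac{1}{\sigma} I & 0 \\ 0 & \tfrac{1}{\tau} I \end{bmatrix},\qquad
\cM^\top\cQ = \begin{bmatrix} \tfrac{1}{\sigma}I + \tau AA^\top & -A \\ -A^\top & \tfrac{1}{\tau} I \end{bmatrix},
\]
so that the off-diagonal blocks of $\cQ+\cQ^\top-\cM^\top\cQ$ cancel and
\[
\cG = \begin{bmatrix} \tfrac{1}{\sigma}I - \tau AA^\top - \tfrac{\beta}{2} I & 0 \\ 0 & \bigl(\tfrac{1}{\tau}-\tfrac{\beta}{2}\bigr)I \end{bmatrix}.
\]
In particular $\cS\in\bbS_{++}$ automatically (hence non-degenerate, making the extra condition $\|x^k-x^{k+1}\|_{\cM^{-\top}\cG\cM^{-1}}^2\ge \eta\|x^k-x^{k+1}\|_\cS^2$ redundant), and $\cG\in\bbS_{++}$ iff $\tfrac{1}{\tau}>\tfrac{\beta}{2}$ and $\tfrac{1}{\sigma}-\tau\|AA^\top\|-\tfrac{\beta}{2}>0$. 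These two scalar inequalities are exactly equivalent to $0<\tau<\min\{\tfrac{2}{\beta},\,\tfrac{2-\beta\sigma}{2\sigma\|A^\top A\|}\}$.

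Finally, the conditions $\ran\cB\subseteq\ran\cQ$ and $\ran(\cA+\cQ)\supseteq\ran(\cQ-\cB)$ hold trivially because $\cQ$ is invertible (lower triangular with unit-diagonal scalar blocks), so $\ran\cQ=\cH$ and Minty's theorem applied to the maximally monotone $\cA+\cQ^{-1}\cB$-type rearrangement yields surjectivity; $\zer(\cA+\cB)\ne\emptyset$ is the standard existence hypothesis for the primal-dual problem. Having verified Assumption~\ref{assume_3} together with the non-degeneracy of $\cS$ and $\cG$, Remark~\ref{r_gppa}-(iii) delivers the weak convergence $x^k=(s^k,u^k)\rightharpoonup(s^\star,u^\star)\in\zer(\cA+\cB)$ directly from Theorem~\ref{t_gppa}-(i). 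I expect the one non-routine step to be checking that the equivalent single-step rewriting of \eqref{afba} (obtained by eliminating $w^k$) really does match the relaxed G-FBS operator \eqref{t_relaxed} with the stated $(\cA,\cB,\cQ,\cM)$; the rest is bookkeeping that parallels the PAPC argument almost verbatim.
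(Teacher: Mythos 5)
Your proposal is correct and follows essentially the same route as the paper: compute $\cS=\cQ\cM^{-1}$ and the refined $\cG=\cQ+\cQ^\top-\cM^\top\cQ-\tfrac{\beta}{2}\cI$, observe both are strictly positive definite exactly under the stated bound on $\tau$, and invoke Theorem~\ref{t_gppa} via Remark~\ref{r_gppa}-(iii); your block computations of $\cS$ and $\cG$ agree with the paper's. The only difference is that you spell out the verification of Assumption~\ref{assume_3} (maximal monotonicity of $\cA$, invertibility of $\cM$, range conditions), which the paper leaves implicit.
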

\begin{proof}
By Remark \ref{r_gppa}-(iii), $\cS$ and $\cG$ are computed as
\[
\cS =  \begin{bmatrix}
   \frac{1}{ \sigma} I    & 0  \\
  0  & \frac{1}{\tau } I \end{bmatrix},\  
  \cG = \begin{bmatrix}
  \frac{1}{\sigma}  I - \tau A A^\top -\frac{\beta}{2} I &  0   \\ 
    0 &  \frac{1}{\tau} I - \frac{\beta}{2} I 
\end{bmatrix} .
\]
Then, by Theorem \ref{t_gppa}, the convergence condition follows from $\cS,\cG \in \bbS_{++}$. 
\hfill 
\end{proof}

\vskip.1cm
The convergence condition of AFBA \eqref{afba} is same as that of PAPC \eqref{papc}. This result is consistent with \cite[Eq.(5.10)]{latafat_chapter} and \cite[Proposition 5.2]{latafat_2017}.

\begin{example} [Condat algorithm \cite{condat_2013}]
Consider the problem $\min_u f(u) + h(u) +\sum_{i=1}^m g_i(A_i u)$, which is a special case of \eqref{p} with $l_i = \iota_{\{   0 \} }$, $r =  0$, $z = 0$. 
\cite[Algorithms 3.1 and 5.1]{condat_2013} is given by
\be \label{condat}
\left\lfloor \begin{array}{lll}
\utilde^{k}  & = & \prox_{\tau h} \big(u^k - \tau
\nabla f(u^k) - \tau \sum_{i=1}^m  A_i^\top s_i^k \big), \\
\stilde_i^{k}  & = & \prox_{\sigma g_i^*} \big( s_i^k +\sigma   A_i ( 2\utilde^k - u^k ) \big), \quad  (i=1,2,...,m)  \\
u^{k+1}  & = & u^k +\gamma  (\utilde^k - u^k), \\
s_i^{k+1}  & = & s_i^k +\gamma  (\stilde_i^k - s_i^k), \quad (i=1,2,...,m)  
\end{array} \right.
\ee
which can be compactly expressed in a relaxed G-FBS form \eqref{rgfbs}:
\[
x^{k} = \begin{bmatrix}
u^k \\ s_1^k \\ \vdots \\ s_m^k
\end{bmatrix},\ 
\cA =   \begin{bmatrix}
\partial h   & A_1^\top & \cdots & A_m^\top \\
- A_1 & \partial g_1^* & \cdots &  0 \\
\vdots  & \vdots  & \ddots & \vdots  \\
- A_m &  0  & \cdots & \partial g_m^* \\
  \end{bmatrix},\ 
\cB =  \begin{bmatrix} 
\nabla f &  0 & \cdots &  0  \\ 
 0 &  0 & \cdots & 0 \\
\vdots  & \vdots  & \ddots & \vdots  \\
 0 &  0 & \cdots &  0 
  \end{bmatrix},
\]
\[   
\cQ =    \begin{bmatrix}
\frac{1}{\tau} I   &  -A_1^\top & \cdots & - A_m^\top \\
- A_1 & \frac{1}{ \sigma} I  & \cdots &  0 \\
\vdots  & \vdots  & \ddots & \vdots  \\
- A_m &  0  & \cdots & \frac{1}{ \sigma} I \\
  \end{bmatrix}.
\] 
\end{example}

\begin{proposition} \label{p_condat}
Let $\{ (u^k, s_1^k, ..., s_m^k)\}_{k\in\N}$ be a sequence generated by \eqref{condat}. {\red Under Assumption \ref{assume_4}},  if  $\tau \in\ ]0, \frac{1}{\sigma \|\sum_{i=1}^m A_i^\top A_i\|} [$, $\gamma \in\ ]0, 2-\frac{\beta}{2} (\frac{1}{\tau} -\sigma \|\sum_i^m A_i^\top A_i\|)^{-1} [$,   the following hold.
\begin{itemize}
\item[\rm (i)]  $ (u^{k},s_1^k,...,s_m^k) \weak (u^\star,s_1^\star,...,s_m^\star) \in \zer( \cA+\cB)$ as $k \rightarrow \infty$.

\item[\rm (ii)]  
$\big\| \bb^{k+1 } - \bb^{k }  \big\|_\cQ 
\le \frac{1}{\sqrt{k+1}}  \sqrt{ \frac{2\gamma\nu}{ (4-2\gamma)\nu -\beta}} 
\big\|\bb^{0} -\bb^\star \big\|_\cQ$,
where $\nu = \frac{1}{\tau} - \sigma \big\| \sum_{i=1}^m A_i^\top A_i \big\|$.
\end{itemize}
\end{proposition}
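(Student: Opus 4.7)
The plan is to treat \eqref{condat} as an instance of the relaxed G-FBS iteration \eqref{rgfbs} with the $(\cA,\cB,\cQ)$ listed in Example, and then invoke Corollary \ref{c_rgfbs} verbatim. The required work is (a) to check that Assumption \ref{assume_1} holds for this $(\cA,\cB,\cQ)$ with the non-degenerate $\cQ$, (b) to produce a lower bound $\nu$ on the smallest eigenvalue of $\cQ$, and (c) to translate the abstract conditions $\nu>\beta/2$ and $\gamma\in\ ]0,2-\beta/(2\nu)[$ into the stated conditions on $\tau,\sigma,\gamma$. Once this is done, parts (i) and (ii) follow respectively from Corollary \ref{c_rgfbs}-(i) and (ii) without any extra argument, since non-degeneracy of $\cQ$ upgrades weak convergence in $\ran\cQ$ to weak convergence in the whole space, as pointed out in Remark \ref{r_rgfbs}-(ii).

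For (a), $\cA$ is the sum of the block-diagonal maximally monotone operator $\mathrm{diag}(\partial h,\partial g_1^*,\ldots,\partial g_m^*)$ and the bounded skew-adjoint coupling with blocks $\pm A_i$; the sum is maximally monotone by a standard saddle-point argument. $\cB$ acts as $\nabla f$ on the $u$-coordinate and as zero elsewhere, hence is $\beta^{-1}$-cocoercive with $\dom\cB=\cH$, and Assumption \ref{assume_4} guarantees $\Arg\min(f+h+\sum_i g_i\circ A_i)\neq\emptyset$ so that $\zer(\cA+\cB)\neq\emptyset$ by the usual KKT correspondence. Once $\cQ$ is shown to be positive definite, Minty's theorem applied to $\cQ^{-1}\cA$ yields $\ran(\cA+\cQ)=\cH$, and the other items of Assumption \ref{assume_1} are trivial in the non-degenerate setting.

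For (b) and (c), write $\cQ$ in the $2\times2$ block form with $(1,1)$-block $\tfrac{1}{\tau}I$, $(2,2)$-block $\tfrac{1}{\sigma}I$ (acting on the stacked dual variable $\mathbf{s}=(s_1,\dots,s_m)$), and off-diagonal $-\mathbf{A}$ with $\mathbf{A}=[A_1;\ldots;A_m]$. A Schur-complement argument with respect to the $(2,2)$-block gives $\cQ\succ 0\iff \tfrac{1}{\tau}I\succ\sigma\mathbf{A}^\top\mathbf{A}=\sigma\sum_i A_i^\top A_i$, i.e.\ exactly the first stated condition $\tau\in\ ]0,1/(\sigma\|\sum_i A_i^\top A_i\|)[$. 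The announced value $\nu=\tfrac{1}{\tau}-\sigma\|\sum_i A_i^\top A_i\|$ is read off from the same Schur complement, equivalently from the Young bound $2\langle A_i u|s_i\rangle\le \sigma\|A_iu\|^2+\tfrac{1}{\sigma}\|s_i\|^2$ summed over $i$, which cancels the dual quadratic and leaves $\|x\|_\cQ^2\ge\nu\|u\|^2$ plus a non-negative residual in the dual block. Plugging this $\nu$ into the abstract $\gamma$-range $\gamma\in\ ]0,2-\tfrac{\beta}{2\nu}[$ reproduces exactly the stated interval.

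The point to be careful about is step (b): a crude Young bound only controls the $u$-component, so getting the clean eigenvalue estimate $\cQ\succeq\nu\cI$ on the whole space, rather than only on $\ran\cB$, is the one place where a short calculation is needed. I would do this by a single Schur complement as above, possibly using the symmetric completion of squares to ensure the dual residual is non-negative; everything else is a matter of bookkeeping and of invoking Corollary \ref{c_rgfbs}. Note also that the non-ergodic rate in (ii) is literally the rate from Corollary \ref{c_rgfbs}-(ii) with our $\nu$ substituted, so no independent estimate is required.
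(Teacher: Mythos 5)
Your proposal follows exactly the paper's route: fit \eqref{condat} into the relaxed G-FBS form, identify $\nu$, and invoke Corollary \ref{c_rgfbs}; the paper's own proof is essentially your plan compressed to two sentences. The extra care you take in part (a) (maximal monotonicity of the monotone-plus-skew $\cA$, cocoercivity of the block-diagonal $\cB$, nonemptiness of $\zer(\cA+\cB)$) is left implicit in the paper but is correct and worth writing out.

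The one place you flag as delicate --- upgrading the Young/Schur bound to a full-space estimate $\cQ\succeq\nu\cI$ --- is, however, not fixable in the way you describe, because that inequality is simply false for the announced $\nu$. Take $m=1$, $\tau=\sigma=1$ and $A_1$ the scalar $1/2$: then $\cQ=\bigl[\begin{smallmatrix}1&-1/2\\-1/2&1\end{smallmatrix}\bigr]$ has smallest eigenvalue $1/2$, while $\nu=\frac{1}{\tau}-\sigma\|A_1^\top A_1\|=3/4$. The Schur complement (equivalently, your summed Young inequality) only yields $\|x\|_\cQ^2\ge\nu\|u\|^2$, i.e.\ $\cQ\succeq\nu P_u$ with $P_u$ the projection onto the primal block --- exactly the ``non-negative residual'' situation you describe --- and no completion of squares will improve this to $\cQ\succeq\nu\cI$. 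The paper's proof asserts $\cQ\succeq\nu\cI$ with the same $\nu$ and is open to the same objection. What actually rescues the proposition is that $\nu$ enters the convergence machinery only through Fact \ref{f_1}-(i) and Lemma \ref{l_cocoercive}, i.e.\ through vectors $\cQ^{-1}\cB x$ with $\cB x$ supported in the $u$-block; for those one only needs $\|g\|^2\ge\nu\,\langle (\cQ^{-1})_{11}\,g\,|\,g\rangle$, and the block-inverse formula gives $(\cQ^{-1})_{11}=(\frac{1}{\tau} I-\sigma\sum_iA_i^\top A_i)^{-1}$, precisely the inverse Schur complement, so the announced $\nu$ is the correct constant for that restricted inequality. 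To make the argument airtight you should either verify Fact \ref{f_1}-(i) directly for this pair $(\cB,\cQ)$ via the block inverse, or settle for $\nu=\lambda_{\min}(\cQ)$ and a correspondingly smaller (but still valid) admissible range for $\gamma$.
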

\begin{proof}
The condition $\cQ \in \bbS_{++}$ requires $\tau\sigma \in\ ]0, \frac{1}{\|\sum_i A_i^\top A_i \|} [ $, and $\nu$ could be chosen as $ \frac{1}{\tau} - \sigma \big\| \sum_{i=1}^m A_i^\top A_i \big\|$, such that $\cQ \succeq \nu \cI$. Then the results follow by Corollary \ref{c_rgfbs}.
\hfill 
\end{proof}

\vskip.1cm
This result is consistent with  \cite[Theorem 5.1]{condat_2013}.  Another counterpart  algorithm to \eqref{condat} is given by \cite[Algorithms 3.2 and 5.2]{condat_2013}:
\be \label{t6}
\left\lfloor \begin{array}{lll}
\stilde_i^{k}  & = & \prox_{\sigma g_i^*} \big(s_i^k + \sigma   A_i  u^k  \big),  \quad (i=1,2,...,m)  \\
\utilde^{k}  & = & \prox_{\tau h} \big(u^k - \tau
\nabla f(u^k) - \tau \sum_{i=1}^m 
 A_i^\top (2\stilde_i^k - s_i^k) \big) ,\\
u^{k+1}  & = & u^k +\gamma  (\utilde^k - u^k) ,\\
s_i^{k+1}  & = & s_i^k +\gamma  (\stilde_i^k - s_i^k). \quad (i=1,2,...,m)  
\end{array} \right.
\ee
The corresponding relaxed G-FBS form is with the same $\bb$, $\cA$, $\cB$ and $\cM$ as the above example, and $\cQ$ is given as 
\[   
\cQ =    \begin{bmatrix}
\frac{1}{ \tau }  I    & A_1^\top & \cdots & A_m^\top \\
 A_1 & \frac{1}{ \sigma } I  & \cdots & 0 \\
\vdots  & \vdots  & \ddots & \vdots  \\
A_m &  0  & \cdots & \frac{1}{ \sigma }  I\\
  \end{bmatrix}.
\]
Proposition \ref{p_condat} also applies for \eqref{t6}.

\subsection{Other examples}
Other classes of algorithms can also be expressed by the G-FBS operator \eqref{gfbs}.   Let us now consider a typical  optimization problem with a linear equality constraint:
\be \label{problem_alm}
\min_u h(u),\qquad \text{s.t.\ } Au = c , 
\ee
where $A: \cX \mapsto \cY$, the function  $h: \cX\mapsto \R\cup \{+\infty\}$ is proper, l.s.c. and convex.

\begin{example} [Basic ALM] \label{eg_alm}
The augmented Lagrangian method (ALM)  is (see \cite[Eq.(1.2)]{mafeng_2018}, \cite[Eq.(7.2)]{taomin_2018} and \cite[Algorithm 6.1]{hbs_2014} for example)
\be \label{alm}
\left\lfloor \begin{array}{lll}
u^{k+1}   & :\in &  \arg \min_u  h(u) +  \frac{\tau}{2}
\big\| Au - c -  \frac{1}{\tau} s^k  \big\|^2 ,  \\
s^{k+1}  & :=  & s^k - \tau  (Au^{k+1} - c ). 
\end{array} \right. 
\ee
It can be customized by the G-FBS operator with
\[
\bb^k =  \begin{bmatrix}
   u^{k } \\ s^{k } \end{bmatrix},\ 
\cA = \begin{bmatrix}
 \partial h  &  - A^\top  \\  A & \partial l
    \end{bmatrix},\ 
\cB=  0, \ 
 \cQ =  \begin{bmatrix}
 0  &   0   \\
 0  & \frac{1}{\tau} I_M  \end{bmatrix}, 
\] 
where $l = -\langle \cdot |c \rangle$.
\end{example}

\begin{proposition} \label{p_alm}
Let $\{ (u^k, s^k)\}_{k\in\N}$ be a sequence generated by \eqref{alm}. If $\tau >0$,  the following hold.
\begin{itemize}
\item[\rm (i)] $\big\| s^{k+1 } - s^{k }  \big\|
\le \frac{1}{ \sqrt{k+1 } } 
\big\| s^{0} - s^\star \big\|$, $\forall k \in \N$; 

\item[\rm (ii)] $ s^{k} \weak  s^\star$, as $k \rightarrow \infty$;
\item[\rm (iii)] $ Au^{k} \rightarrow c$, as $k \rightarrow \infty$.

\end{itemize}
\end{proposition}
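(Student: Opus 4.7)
The plan is to recognize \eqref{alm} as a degenerate metric PPA of the form \eqref{ppa}, since the G-FBS fitting has $\cB=0$. All three conclusions will then fall out of Corollary \ref{c_ppa} together with the explicit form of the dual update.

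First, I would verify Assumption \ref{assume_1} (with $\cB=0$) for the listed $(\cA,\cQ)$. Maximal monotonicity of $\cA$ follows by writing it as the sum of the diagonal maximally monotone part $\mathrm{diag}(\partial h,\partial l)$ with $l=-\langle\cdot|c\rangle$, plus a bounded skew-symmetric linear coupling with blocks $\pm A,\mp A^\top$ (via \cite[Corollary 25.5]{plc_book}, since the skew operator has full domain). The metric $\cQ$ is self-adjoint with $\ker\cQ=\cX\times\{0\}$ (hence degenerate) and $\ran\cQ=\{0\}\times\cY$ (closed). The range inclusion $\ran(\cA+\cQ)\supseteq\ran\cQ$ is exactly the solvability of the $u$-subproblem in \eqref{alm} for every right-hand side in $\cY$, which is built into the well-posedness of ALM and is the reason we write ``$:\in\arg\min$'' in \eqref{alm}. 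Finally, $\zer(\cA+\cB)=\zer\cA$ coincides with the KKT pairs of \eqref{problem_alm}, whose nonemptiness is the standing premise.

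Second, I would exploit the crucial simplification $\|x\|_\cQ^{2}=\tfrac{1}{\tau}\|s\|^{2}$ for $x=(u,s)$. Substituting this into Corollary \ref{c_ppa}-(i) yields assertion (i) at once. Similarly Corollary \ref{c_ppa}-(ii) gives $\cQ x^k\weak \cQ x^\star$, which---because $\cQ x=\tfrac{1}{\tau}(0,s)$---is equivalent to $s^k\weak s^\star$, establishing (ii). Note that one cannot upgrade this to weak convergence of $u^k$: the degeneracy of $\cQ$ leaves the $u$-coordinate completely unconstrained by the distance, consistent with the observation after Theorem \ref{t_dist} that cluster points can differ by an element of $\ker\cQ$.

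Finally, for (iii) I would simply rewrite the dual update as $Au^{k+1}-c=\tfrac{1}{\tau}(s^k-s^{k+1})$, whence the strong convergence $Au^{k+1}\to c$ is an immediate consequence of (i). I expect no substantial obstacle: the only nontrivial checks are the range/surjectivity condition and the maximality of $\cA$, both handled by standard monotone operator facts; everything quantitative is a direct specialization of Corollary \ref{c_ppa} to the one-dimensional effective metric $\tfrac{1}{\tau}\|\cdot\|^{2}$ on the $s$-component.
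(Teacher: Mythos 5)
Your proposal is correct and follows essentially the same route as the paper, which simply invokes Corollary \ref{c_ppa} for (i)--(ii) and the $s$-update plus $s^{k+1}-s^k\rightarrow 0$ for (iii). The only difference is that you spell out the verification of Assumption \ref{assume_1} (maximal monotonicity of $\cA$, the structure of $\ker\cQ$ and $\ran\cQ$, and the range condition as solvability of the $u$-subproblem), which the paper leaves implicit; this is a welcome but not essential addition.
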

\begin{proof}
{\rm (i)--(ii)} follow from Corollary \ref{c_ppa}. 

{\rm (iii)} follows from the $s$-update of \eqref{alm} and strong convergence of $s^{k+1}-s^k\rightarrow 0$.
\hfill 
\end{proof}

\vskip.1cm
The variable $u$ in \eqref{alm} is merely intermediate result that is not proximally regularized. That is why the metric $\cQ$ is degenerate here. Moreover, the $u$-step of \eqref{alm} may not have the unique solution. However, $Au^k$ is always unique \cite{self_eq}.  This is also observed in \cite[Remark 6.1]{hbs_2014}. The tractability of  the $u$-step can be solved by  linearization, as shown in the next example.

\begin{example} [Linearized ALM] \label{eg_linear_alm}
The linearlized ALM is given as \cite{yang_yuan_2013}
\be \label{linear_alm}
\left\lfloor \begin{array}{lll}
u^{k+1}   & := &  \arg \min_u   h(u) + \frac{\rho } {2} 
\big\| u - u^k +  \frac{1}{ \rho} A^\top \big( \tau  
(A u^k -c) -  s^k \big) \big\|^2,  \\
 s^{k+1}  & :=  & s^k - \tau  (  Au^{k+1} - c) .
\end{array} \right. 
\ee
Its G-FBS  interpretation \eqref{gfbs} is given as
\[
x^k =  \begin{bmatrix}
  u^{k } \\ s^{k } \end{bmatrix},\ 
\cA = \begin{bmatrix}
 \partial h  &  - A^\top  \\  A & \partial l
    \end{bmatrix},\ 
\cB =  0 , \ 
 \cQ =  \begin{bmatrix}
\rho I  -  \tau A^\top A  &  0   \\
 0  & \frac{1}{\tau} I  \end{bmatrix}, 
\] 
where $l=-\langle \cdot| c\rangle$. 
\end{example}

\begin{proposition} \label{p_lalm}
Let $\{ (u^k, s^k)\}_{k\in\N}$ be a sequence generated by \eqref{linear_alm}. If $\tau \in \ ]0, \rho/   \|A^\top A\|[$,  the following hold.
\begin{itemize}
\item[\rm (i)] $\big\| x^{k+1 } - x^{k }  \big\|_\cQ
\le \frac{1}{ \sqrt{k+1 } } 
\big\| x^{0} - x^\star  \big\|_\cQ $,
$\forall k \in \N$.  

\item[\rm (ii)] $(u^k, s^{k}) \weak (u^\star, s^\star) \in \zer \cA$, as $k \rightarrow \infty$, where $u^\star$ is a solution to the problem \eqref{problem_alm}. 

\item[\rm (iii)] $ Au^{k} \rightarrow c$, as $k \rightarrow \infty$.
\end{itemize}
\end{proposition}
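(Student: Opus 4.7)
The plan is to apply Corollary \ref{c_ppa} directly to the G-FBS fitting given just above the proposition, in complete parallel with the proof of Proposition \ref{p_alm} but with one crucial upgrade: here the linearization term $\rho I$ renders the metric $\cQ$ non-degenerate, so the conclusions of Corollary \ref{c_ppa} translate to weak convergence on the whole product space $\cX \times \cY$, not merely in $\ran \cQ$.

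First I would verify Assumption \ref{assume_1} with $\cB = 0$ (whence $\beta = 0$). Maximal monotonicity of $\cA$ follows from writing it as the sum of the block-diagonal maximally monotone $\mathrm{diag}(\partial h, \partial l)$ (with $l = -\langle \cdot\,|\,c\rangle$, a continuous affine function) and the bounded skew-symmetric linear coupling $\bigl[\begin{smallmatrix} 0 & -A^\top \\ A & 0 \end{smallmatrix}\bigr]$. For $\cQ$: under $\tau \in\ ]0, \rho/\|A^\top A\|[$ both diagonal blocks are strictly positive, since $\rho I - \tau A^\top A \succeq (\rho - \tau \|A^\top A\|)I \succ 0$ and $\tfrac{1}{\tau}I \succ 0$; hence $\cQ \succeq \nu \cI$ with $\nu = \min\{\rho - \tau\|A^\top A\|,\, 1/\tau\} > 0$. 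Non-degeneracy then trivializes items (iii)--(vii) of Assumption \ref{assume_1}: the kernel is $\{0\}$, $\cQ$ has closed range, $\ran \cB = \{0\} \subseteq \ran \cQ$, and $\ran(\cA + \cQ) = \cH$ by Minty's theorem. Item (viii) is the standard KKT-existence hypothesis for \eqref{problem_alm}.

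Once the assumption check is complete, assertion (i) is an immediate citation of Corollary \ref{c_ppa}-(i). For (ii), the non-degeneracy of $\cQ$ upgrades ``$\cQ x^k \weak \cQ x^\star$'' to ``$x^k \weak x^\star$'' on $\cX \times \cY$ (as discussed right after Corollary \ref{c_ppa}), and I would then unpack $(u^\star, s^\star) \in \zer \cA$ into the KKT system $A^\top s^\star \in \partial h(u^\star)$ and $Au^\star = c$ to identify $u^\star$ as a primal solution of \eqref{problem_alm}. For (iii), the $s$-update of \eqref{linear_alm} reads $Au^{k+1} - c = -\tfrac{1}{\tau}(s^{k+1} - s^k)$, and from the diagonal structure of $\cQ$ we have $\|s^{k+1}-s^k\| \le \sqrt{\tau}\,\|x^{k+1}-x^k\|_\cQ$; combined with (i) this gives $\|Au^k - c\| \to 0$ strongly (indeed at rate $\cO(1/\sqrt{k})$).

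The argument presents no real obstacle; the only substantive observation is the non-degeneracy of $\cQ$ under the stated step-size condition, which is precisely the payoff of the linearization. This should be contrasted with Example \ref{eg_alm}, where the absence of the $\rho I$ term left $\cQ$ genuinely degenerate and permitted only $s^k \weak s^\star$ rather than the full convergence $(u^k, s^k) \weak (u^\star, s^\star)$ obtained here.
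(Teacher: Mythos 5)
Your proposal is correct and follows essentially the same route as the paper, which simply cites Corollary \ref{c_ppa} for (i)--(ii) and the $s$-update for (iii); you have merely filled in the details the paper leaves implicit (verification of Assumption \ref{assume_1}, the non-degeneracy bound $\cQ \succeq \nu\cI$ with $\nu = \min\{\rho-\tau\|A^\top A\|,\,1/\tau\}$, and the unpacking of $\zer\cA$ into the KKT system). The quantitative remark that $\|Au^k-c\|\to 0$ at rate $\cO(1/\sqrt{k})$ is a small bonus beyond what the paper states.
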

\begin{proof}
{\rm (i)--(ii)} follow from Corollary \ref{c_ppa}. 

{\rm (iii)} follows from the $s$-update of \eqref{linear_alm} and strong convergence of $s^{k+1}-s^k\rightarrow 0$.
\hfill 
\end{proof}

\vskip.1cm
The linearization strategy can be viewed as a preconditioning, which guarantees the uniqueness of the $u$-step and the weak convergence of $u^k \weak u^\star$.

\begin{example} [Linearized Bregman algorithm  \cite{cjf_3}]
\label{eg_lb}
The scheme reads as \cite[Eq.(2.2)]{cjf_3}
\be \label{lb}
\left\lfloor \begin{array}{lll}
u^{k+1}   & := &  \arg \min_u \rho \tau  h(u) + 
\frac{1}{2} \big\| u - \rho A^\top  s^k  \big\|^2,   \\
s^{k+1} & :=  & s^k  - (Au^{k+1} - c). 
\end{array} \right. 
\ee
Its G-FBS  interpretation \eqref{gfbs} is given as
\[ 
x^k =    \begin{bmatrix}
  u^{k}  \\ s^{k}  \end{bmatrix},\  
\cA = \begin{bmatrix}
 \tau  \partial \tilde{h} + \frac{1}{\rho} I - A^\top A
   &  -A^\top      \\  A &  \partial l
    \end{bmatrix},\ 
 \cB =  0,\  \cQ =  \begin{bmatrix}
 0  &   0   \\   0  & I  \end{bmatrix} ,
\]
where $\tilde{h} = h+\langle \cdot| A^\top c\rangle$, $l=-\langle \cdot |c \rangle$. 
\end{example}

\begin{proposition}
Let $\{ (u^k, s^k)\}_{k\in\N}$ be a sequence generated by \eqref{lb}. If $\tau>0$ and $\rho \in\ ]0, \frac{1}{\|A^\top A\|} [$,  the following hold.
\begin{itemize}
\item[\rm (i)] $\big\| s^{k+1 } - s^{k }  \big\|
\le \frac{1}{ \sqrt{k+1 } } 
\big\| s^{0} - s^\star \big\|$, $ \forall k \in \N$;

\item[\rm (ii)] $ s^{k}  \weak s^\star$, as $k\rightarrow\infty$;
\item[\rm (iii)] $Au^k  \rightarrow  c$, as $k\rightarrow\infty$.
\end{itemize}
\end{proposition}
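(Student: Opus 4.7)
The plan is to apply Corollary \ref{c_ppa} to the G-FBS fitting above, since $\cB=0$ puts \eqref{lb} in the degenerate metric PPA setting. First I would verify Assumption \ref{assume_1} with this particular $\cA$ and $\cQ$. The metric $\cQ$ is self-adjoint and bounded, with closed range $\{0\}\times \cY$ and non-trivial kernel $\cX\times\{0\}$, so items (ii)--(iv) are immediate. With $\cB=0$, items (v)--(vi) are trivial, and $\ran(\cA+\cQ)\supseteq \ran\cQ=\ran(\cQ-\cB)$ reduces to well-posedness of the $u$- and $s$-updates, which is transparent from \eqref{lb}.

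The main obstacle is the maximal monotonicity of $\cA$. Decomposing $\cA=\cA_0+\cA_1$, the skew term $\cA_1=\begin{bmatrix} 0 & -A^\top\\ A & 0\end{bmatrix}$ is bounded and skew-adjoint, hence maximally monotone with full domain. The block-diagonal part $\cA_0$ has entries $\tau\partial\tilde h+(\tfrac{1}{\rho}I-A^\top A)$ and $\partial l$; the second is the subdifferential of a proper convex l.s.c. function, while the first is the sum of a maximally monotone operator and a bounded self-adjoint linear one, which remains maximally monotone precisely when $\tfrac{1}{\rho}I-A^\top A\succeq 0$, i.e., when $\rho\in\ ]0,1/\|A^\top A\|[$. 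Then \cite[Corollary 25.5]{plc_book} yields maximal monotonicity of $\cA$. For Assumption \ref{assume_1}-(viii), any primal-dual solution $(u^\star,\lambda^\star)$ of \eqref{problem_alm} produces $(u^\star,s^\star)\in\zer\cA$ by matching the KKT system $Au^\star=c$, $A^\top s^\star\in\tau\partial h(u^\star)+\tfrac{1}{\rho}u^\star$.

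With Assumption \ref{assume_1} in place, Corollary \ref{c_ppa} applies. Since $\|x\|_\cQ^2=\|s\|^2$ and $\cQ x=(0,s)$, the rate in Corollary \ref{c_ppa}-(i) collapses to (i), and the weak convergence of $\{\cQ x^k\}$ in Corollary \ref{c_ppa}-(ii) yields (ii). Finally, (iii) follows immediately from the $s$-update of \eqref{lb}, which gives $\|Au^{k+1}-c\|=\|s^{k+1}-s^k\|\to 0$ by (i).
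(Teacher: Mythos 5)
Your proposal is correct and follows essentially the same route as the paper, whose proof is just a pointer to Propositions \ref{p_alm} and \ref{p_lalm} (i.e., apply Corollary \ref{c_ppa} to the degenerate-metric PPA fitting with $\cB=0$ and read off (iii) from the $s$-update). Your explicit verification of Assumption \ref{assume_1} --- in particular the decomposition of $\cA$ into a maximally monotone block-diagonal part (which is where the condition $\rho\in\ ]0,1/\|A^\top A\|[$ actually enters, via $\frac{1}{\rho}I-A^\top A\succeq 0$) plus a bounded skew part --- supplies exactly the detail the paper leaves implicit.
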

\begin{proof}
Similar to Proposition \ref{p_alm} and \ref{p_lalm}.
\hfill
\end{proof}

\subsection{Short summary}
Many first-order operator splitting algorithms have been shown as the customized applications of a simple G-FBS operator \eqref{gfbs} or its relaxed version \eqref{t_relaxed}. One can verify that more existing algorithms, e.g.,  \cite{fang_2015,mafeng_2018,hbs_2014,hbs_yxm_2018,
cch_2016,bai_2018}, belong to the G-FBS class, which are not detailed here.

We  stress the simplicity of the G-FBS fitting, compared to the complicated characterization by variational inequality \cite{hbs_siam_2012,hbs_siam_2012_2} or procedure of constructing Fej\'{e}r monotone sequence \cite{plc_vu}.  More importantly, there is no need to perform the convergence analysis case-by-case: All the results are the   immediate consequences of the general results in Sect. \ref{sec_gfbs} and \ref{sec_extension}.

In many examples listed above, the cocoercive operator $\cB=0$, and the problem \eqref{inclusion} becomes $0\in\cA x$. Notice that it cannot be simply understood as a minimization of only one convex function. In fact, $0\in \cA x$ in our setting also encompasses the minimization of the sum of multiple convex (not necessarily smooth) functions.

Moreover, the monotone operators  $\cA$  in many examples  bear the typical (diagonal) monotone  + (off-diagonal) skew structure: 
\[
\cA =  \begin{bmatrix}
\partial f &    -A^\top  \\
A   & \partial g  \end{bmatrix} = \underbrace{ 
 \begin{bmatrix}
\partial f &  0   \\
  0   & \partial g  \end{bmatrix} }_\text{monotone} +  
\underbrace{ \begin{bmatrix}
  0  & - A^\top  \\  A &   0 
\end{bmatrix} }_\text{skew}, 
\]
which coincides with the observations  in \cite{plc_fixed,arias_2011,bredies_2017}.  It can be further verified that $\cA$ with such structure fails to be cyclically monotone, and cannot be viewed as a subdifferential of a convex function, by \cite[Theorem 22.18]{plc_book}. This was also  mentioned in Sect. \ref{sec_summary} and  Remark \ref{r_assume_2}-(iv). Therefore, there are no conclusions regarding the convergence of objective value in Examples \ref{eg_radmm}--\ref{eg_padmm} and \ref{eg_cp}--\ref{eg_lb}. 

\section{Conclusions}
In this paper, we performed a systematic study of the G-FBS operator and its associated fixed-point iterations, particularly under degenerate setting.  A great variety of operator splitting algorithms  were illustrated as the  concrete examples of the G-FBS operator.

Last, it seems interesting to further extend the proposed framework to the case when the  metric $\cQ$ and relaxation operator $\cM$ are allowed to vary over the iterations. Another  limitation of this G-FBS operator is that $\cQ$ and $\cM$ are assumed as linear here. Thus,  it fails to cover Bregman proximal algorithms  \cite{teboulle_2018,plc_bregman} and a few PDS algorithms  \cite{plc_2012,bot_jmiv_2014,ywt_2017}, which may correspond to nonlinear metric $\cQ$ and $\cM$. It is worthwhile to extend the G-FBS operator \eqref{gfbs} to nonlinear case.

\section{Acknowledgements}
I am gratefully indebted to the anonymous reviewers  for helpful discussions, particularly related to singularity of metric,  closer connections to the existing works, and the simplified proof of Proposition \ref{p_gpfbs_obj}.

\section{Data availability}
There is no associated data with this manuscript.

\section{Disclosure statement}
The author declares there are no conflicts of interest regarding the publication of this paper.


\bibliographystyle{unsrt} 
\small{
\bibliography{refs}
}

\end{document}